\setlist[enumerate]{leftmargin=.5in}
\setlist[itemize]{leftmargin=.5in}
\newtheorem{theorem}{Theorem}[section]
\newtheorem{lemma}[theorem]{Lemma}
\newtheorem{corollary}[theorem]{Corollary}
\newtheorem{proposition}[theorem]{Proposition}
\newtheorem{remark}[theorem]{Remark}
\newcommand{\cA}{\mathcal{A}}
\newcommand{\cD}{\mathcal{D}}
\newcommand{\cF}{\mathcal{F}}
\newcommand{\cG}{\mathcal{G}}
\newcommand{\cH}{\mathcal{H}}
\newcommand{\cL}{\mathcal{L}}
\newcommand{\cN}{\mathcal{N}}
\newcommand{\cO}{\mathcal{O}}
\newcommand{\cR}{\mathcal{R}}
\newcommand{\cS}{\mathcal{S}}
\newcommand{\cX}{\mathcal{X}}
\newcommand{\E}{\mathbb{E}}
\newcommand{\N}{\mathbb{N}}
\newcommand{\R}{\mathbb{R}}
\newcommand{\PP}{\mathbb{P}}
\renewcommand{\d}{\mathrm d}
\DeclareMathOperator{\diag}{diag}
\newcommand{\trace}{\operatornamewithlimits{trace}}
\newcommand{\cls}[1]{{\color{black}{#1}}}
\newcommand{\sw}[1]{{\color{black}{#1}}}
\newcommand{\db}[1]{{\color{black}{#1}}}
\begin{document}

\title{Well Posedness and Convergence Analysis of the Ensemble Kalman Inversion}

\author{Dirk Bl\"omker$^1$, Claudia Schillings$^2$, Philipp Wacker$^3$ and Simon Weissmann$^2$}

\address{$^1$ Universit\"at Augsburg, $^2$ Universit\"at Mannheim, $^3$ Universit\"at Erlangen}
\eads{\mailto{dirk.bloemker@math.uni-augsburg.de}, \mailto{c.schillings@uni-mannheim.de}, \mailto{phkwacker@gmail.com}, \mailto{sweissma@mail.uni-mannheim.de}}

\vspace{10pt}
\begin{indented}
\item[]September 2018
\end{indented}

\begin{abstract}
The ensemble Kalman inversion is widely used in practice to estimate unknown parameters from noisy measurement data. Its low computational costs, straightforward implementation, and non-intrusive nature makes the method appealing in various areas of application. We present a complete analysis of the ensemble Kalman inversion with perturbed observations for a fixed ensemble size when applied to linear inverse problems. The well-posedness and convergence results are based on the continuous time scaling limits of the method. The resulting coupled system of stochastic differential equations allows to derive estimates on the long-time behaviour and provides insights into the convergence properties of the ensemble Kalman inversion. We view the method as a derivative free optimization method for the least-squares misfit functional, which opens up the perspective to use the method in various areas of applications such as imaging, groundwater flow problems, biological problems as well as in the context of the training of neural networks. 
\end{abstract}
\ams{65N21, 62F15, 65N75, 65C30, 90C56
}

%
\vspace{2pc}
\noindent{\it Keywords}: Bayesian inverse problems, ensemble Kalman inversion, optimization, well-posedness and accuracy
%

\submitto{\IP}
%
%
%
\section{Introduction}

Inverse problems arise in various fields of sciences and engineering. Methods to efficiently incorporate data into models are needed 
to reduce the overall uncertainty and to ensure the reliability of the simulations under real world conditions. 
The Bayesian approach to inverse problems provides a rigorous framework for the incorporation and quantification of 
uncertainties in measurements, parameters and models. However, in computationally intense applications, 
the approximation of the solution of the Bayesian inverse problem, the posterior, might be prohibitively expensive. 
In such settings, the ensemble Kalman filter (EnKF), originally introduced by Evensen \cite{Evensen2003} for data assimilation, 
has been reported to produce reliable estimates of the unknown parameters with low computational cost, 
making the method very appealing for large scale problems. \cls{Areas of applications include, among others, 
groundwater flow problems \cite{oliver2008inverse}, climate models \cite{doi:10.1002/2017GL076101}, biological problems \cite{HU2012145}, image reconstruction \cite{4914783} and building \cite{DESIMON2018220} and material sciences \cite{Iglesias_2018}.}
Most recent directions involve the use of the ensemble Kalman inversion as a derivative free optimization method, 
in particular in the context of the training of neural networks \cite{Nikola}. Despite its documented success, 
the ensemble Kalman inversion is underpinned by limited theoretical understanding. 
The goal of our work is to give useful insights into properties of the method and provide tools for a systematic development and improvement.

For linear dynamical systems and Gaussian initial conditions, 
analysis of the large ensemble size limit has been done e.g.\ in \cite{L2009LargeSA,doi:10.1137/140965363}. 
Convergence to the mean-field Kalman filter for nonlinear systems can be found in \cite{doi:10.1137/140984415}. 
Multilevel extensions are proposed e.g.\ in \cite{doi:10.1137/15M100955X, 2016arXiv160808558C}. 
In \cite{0951-7715-27-10-2579,47c5c78ebb8c44ef9d8d5e0d23e23c13,0951-7715-29-2-657}, 
the authors present an analysis of the long-time behaviour and ergodicity of the ensemble Kalman filter 
with arbitrary ensemble size establishing time uniform bounds to control the filter divergence with variance inflation techniques 
and ensuring in addition the existence of an invariant measure. 
Accuracy results have been recently established for a fixed ensemble size in the linear Gaussian setting, see \cite{doi:10.1002/cpa.21722,Tong2018} 
and for ensemble Kalman-Bucy filters applied to continuous-time filtering problems, 
see \cite{delmoral2018, doi:10.1137/17M1119056}.

For inverse problems, the large ensemble size limit has been investigated in \cite{ErnstEtAl2015}. It has been shown that the ensemble Kalman inversion \cls{(EKI)} is not consistent with the Bayesian
perspective in the nonlinear setting, but can be interpreted as a point
estimator of the unknown parameters. We will adopt this viewpoint throughout the paper and analyze the behavior of the EKI as an optimization method of the least-squares misfit functional. \cls{However, to motivate the algorithm, we will shortly introduce the Bayesian setting and derive the ensemble Kalman filter for inverse problems.} In \cite{SchSt2016}, it was demonstrated that the continuous time limit of the EKI algorithm is an interacting set of gradient flows, see also \cite{bergemann2010localization, bergemann2010mollified, Reich2011} for the continuous time limit of the EnKF in the data assimilation context. 
In the discrete setting, the connection to deterministic regularisation techniques is established 
in \cite{Iglesias2015, 2016InvPr..32b5002I}. \cls{} 
In the following, we will interpret the EKI method as a numerical discrete approximation of a stochastic differential equation, 
cp.\ \cite{bloemker2018strongly} and show well-posedness and asymptotic behaviour of the stochastic differential equation. 
Our work will extend the results from \cite{SchSt2016, doi:10.1080/00036811.2017.1386784} to the inversion with perturbed observations. \cls{Though both methods, i.e. the limit of the EKI with perturbed observations and the deterministic limit from  \cite{SchSt2016}, can be analysed from an optimization perspective, the EKI variant with perturbed observation is shown to be second order accurate, whereas the deterministic limit underestimates the covariance in the linear, Gaussian setting, see e.g. \cite{Evensen2003}. In addition, in the nonlinear setting, methods that add noise to data are reported to be more robust to assumptions about linearity and normality, see e.g. \cite{Dean2008} and the references therein. We therefore believe that the EKI with perturbed observations is a good starting point for methods (also of higher accuracy) in the nonlinear, non-Gaussian setting and that the analysis presented here provides valuable insights for the development of these methods.} 

Our contribution consists of providing a complete analysis of the ensemble Kalman inversion with perturbed observations for linear forward operators. 
The presented results hold true for arbitrary prior distributions on the unknown parameters, 
i.e.\ no Gaussian assumption is invoked. 
We want to stress that we analyze the algorithm in practical regimes by focusing on results for a fixed ensemble size. 
We study the continuous time limit of the ensemble Kalman inversion, which allows to establish well-posedness and accuracy results by exploiting the underlying structure of the limiting coupled stochastic differential equations for the particles. In particular, we make the following main contributions: 
\begin{itemize}
\item 
We prove the existence and uniqueness of solutions of the limiting system of stochastic differential equations, thus well-posedness of the algorithm.
\item
We quantify the ensemble collapse in the observation as well as in the parameter space. The ensemble collapse is characterized in terms of moments and almost sure convergence with given rate.
\item
In case of exact data, we establish convergence results to the truth using variance inflation. The convergence is characterized in terms of second moments and almost sure convergence with given rate. Under additional assumptions on the forward operator, 
the results in the data space can be transferred to the parameter space.  
\item
We provide numerical experiments which illustrate the theoretical results studied in this paper.
\end{itemize}
We do \emph{not} show strong convergence of the discrete EnKF iteration to continuous paths of the corresponding SDE. 
This would be interesting and there are preliminary results \cite{bloemker2018strongly}, but this is still ongoing research.

The remainder of the article is structured as follows. At the end of this section, we formulate the inverse problem and the Bayesian approach to it. 
Section \ref{sec2} is devoted to the ensemble Kalman inversion with perturbed observations. In section \ref{sec3} we formulate the continuous time limit of the algorithm, introduce the assumptions on the forward problem and prove the well-posedness of the method, 
i.e.\ we show the existence and uniqueness of strong solutions of the limit. Section \ref{sec4} presents the results on the ensemble collapse, in the data and parameter space. 
In section \ref{sec5}, we show convergence to the truth using variance inflation techniques. 
Numerical experiments illustrating the theoretical findings are given in section \ref{sec6}. 
Finally, in section \ref{sec:concl}, we conclude with a short summary of the main results and discussion of future work. 
In \ref{sec:app} auxiliary results are presented and and \ref{sec:app2} contains the proof on the higher-order ensemble collapse.

Let $\cG\in \mathcal C(\cX, \mathbb R^K)$ denote the forward response operator mapping the unknown parameters $u\in \cX$ to the data space $\mathbb R^K$, 
where $\cX$ is a separable Hilbert space and $K\in\mathbb N$ denotes the number of observations. 
We consider the inverse problem of recovering unknown parameters $u\in \cX$ from noisy observation $y\in \mathbb{R}^K$ given by
\begin{equation*}
y = \cG(u)+\eta\,,
\end{equation*}
where $\eta \sim\mathcal N(0,\Gamma)$ is a Gaussian with mean zero and covariance matrix $\Gamma$, 
which models the noise in the observations and in the model.

Following the Bayesian approach, for fixed $y\in \mathbb{R}^K$ 
we introduce the least-squares functional $\Phi(\cdot;y):\cX\to \mathbb R$ by
\begin{equation*}
\Phi(u ; y)=\frac{1}{2}|(y-\cG(u)) |_{\Gamma}^2\,.
\end{equation*}
with $|\cdot |_\Gamma:= |\Gamma^{-\frac12}\cdot |$ denoting the weighted Euclidean norm in $\mathbb R^K$. 
The unknown parameter $u$ is modeled as a $\cX$-valued random variable with prior distribution $\mu_0$. 
Thus, the pair $(u,y)$ is a jointly varying random variable on $\cX \times \mathbb R^K$. 
We assume for the observational noise that $\eta\sim\cN(0,\Gamma)$ is independent of $u\sim\mu_0$.

By Bayes' Theorem, the solution to the inverse problem is the $\cX$-valued random variable $u\mid y\sim\mu$ where the law $\mu$ is given by 
\begin{equation*}
\mu(du)=\frac{1}{Z}\exp(-\Phi(u ; y))\mu_0(du)
\end{equation*}
with the normalization constant $Z$, where
\begin{equation*}
Z:=\int_{\cX}\exp(-\Phi(u;y))\mu_0(du).
\end{equation*}
{Note that evaluation of the posterior requires evaluation of the forward model via $\Phi(u; y)$.}

\section{The EnKF for Inverse Problems}
\label{sec2}
{The Ensemble Kalman methodology consists of choosing an \emph{ensemble} of ``particles'' by drawing from the prior which are then transformed to a new set of particles via a linear Gaussian update. A good idea (see \cite{SchSt2016, 2016InvPr..32b5002I} for details) is to do this not in one big leap but in an iteration of steps. 
\cls{This amounts to interpolating the step from prior $\mu_0$ to the posterior $\mu$ by choosing an artificial time index $n$ and defining a sequence of measures $\mu_0, \mu_1, \mu_2, \ldots, \mu_N$ where $\mu_0$ is the prior and $\mu_N = \mu$ is the posterior, i.e.
\begin{equation*}
\mu_{n+1}(du)=\frac{1}{Z_n}\exp\bigl(-h\Phi(u;y)\bigr)\mu_n(du)\,, \quad n=0,\ldots, N-1
\end{equation*} 
with $h=N^{-1}$ and $Z_n=\int \exp(-h\Phi(u))\mu_n(du).$.}
The iterative form of the Ensemble Kalman methodology iterates the initial ensemble of particles through this set of intermediate measures. This is the setting we will constrain ourselves. Note again that any ``time'' $n$ or (later) $t$ is entirely artificial (transformation) time and independent of any physical time which may be present in the data.}

{From now on, the set $\{u_0^{(j)}\}_j$ denotes the initial ensemble of particles with each particle living in parameter space: $u_0^{(j)}\in \cX$. The iteration of particles consists of the set $\{u_n^{(j)}\}_{j, n}$ where $j$ is the ensemble index and $n$ the artificial time index. Each particle again is an element of parameter space: $u_n^{(j)}\in \cX$.}
\cls{
The measures $\mu_n$ will be approximated by an equally weighted sum of Dirac measures 
\begin{equation}
  {\mu_{n}}\simeq \frac 1J \sum_{j=1}^J \delta_{u_n}^{(j)}
  \end{equation}
  via the ensemble of particles. The initial ensemble is constructed based on the prior distribution and then mapped to the next iteration via a Gaussian approximation, i.e. given $\{u_n^{(j)}\}_{j, n}$, the transformed ensemble $\{u_{n+1}^{(j)}\}_{j, n}$ satisfies
  \begin{equation*}
\bar u_{n+1}=\overline u_{n}+K_n(y- \overline \mathcal G(u_n)) \qquad C(u_{n+1})=C(u_n)-K_nC^{pu}(u_n)
\end{equation*}
with $K_n=C^{up}(u_{n})(C^{pp}(u_{n})+\frac 1h\Gamma)^{-1}$.
The operators $C^{pp}$, $C^{up}$ and $C^{pu}$ are the empirical covariances {defined on $\cX^J$ } by
\begin{eqnarray*}
C^{pp}(u)&=\frac{1}{J}\sum\limits_{j=1}^J(\cG(u^{(j)})-\overline{\cG})\otimes(\cG(u^{(j)})-\overline{\cG}),\\
C^{up}(u)&=\frac{1}{J}\sum\limits_{j=1}^J(u^{(j)}-\overline{u})\otimes(\cG(u^{(j)})-\overline{\cG}),\\
C^{pu}(u)&=\frac{1}{J}\sum\limits_{j=1}^J(\cG(u^{(j)})-\overline{\cG})\otimes (u^{(j)}-\overline{u})
\end{eqnarray*}
{where {$u$ is short for the multiindex vector $(u^{(j)})_j\in \cX^J$} and $\otimes$ denotes the tensor product (or rank one operator) given by 
\begin{equation*}
z_1\otimes z_2:\cH_2\to\cH_1\ \mbox{with}\ h\mapsto z_1\otimes z_2(h):=\langle z_2,h\rangle_{\cH_2}\cdot z_1
\end{equation*}
for Hilbert spaces $(\cH_1,\langle\cdot,\cdot\rangle_{\cH_1}),(\cH_2,\langle\cdot,\cdot\rangle_{\cH_2})$ and $z_1\in\cH_1, z_2\in\cH_2$.} The empirical means are given by
\begin{equation*}
\overline{u}=\frac{1}{J}\sum\limits_{j=1}^Ju^{(j)}, \qquad
\overline{\cG}=\frac{1}{J}\sum\limits_{j=1}^J\cG(u^{(j)}).
\end{equation*}
The transformation of the ensemble from iteration $n$ to $n+1$ is not uniquely determined via the Kalman update formula. For the EKI with perturbed observations, the update formula is shown to be satisfied in the mean, see e.g. \cite{Evensen2003}. 
}

\sw{ Although we consider a Gaussian approximation for the measures $\mu_n$, for our theoretical results we do not require any assumption on Gaussian prior distributions.
}

For a given artificial step-size $h>0$ and $J\ge2$ particles, the EnKF iteration for the $j$-th particle is given by
\begin{equation}
\label{Iteration}
\fl u_{n+1}^{(j)} = u_n^{(j)}+C^{up}(u_n)(C^{pp}(u_n)+h^{-1}\Gamma)^{-1}(y_{n+1}^{(j)}-\cG(u_n^{(j)})),\quad j = 1,\dots,J\,,
\end{equation}
where the initial particles $u_0^{(j)}$, $j=1,\ldots, J$ are draws from the prior distribution. In each step, we consider artificially perturbed data
\begin{equation*}
y_{n+1}^{(j)} = y+\xi_{n+1}^{(j)}\;,
\end{equation*}
where the perturbations $\xi_{n+1}^{(j)}$, with respect to both $j$ and $n$, are i.i.d.\ 
random variables distributed according to $\cN(0,h^{-1}\Gamma)$.
For a derivation of the EnKF for inverse problems, we refer to \cite{StLawIg2013}.
\section{Continuous Time Limit}\label{Cont_time}
\label{sec3}

The continuous time limit of the {discrete } EnKF inversion {\eref{Iteration}} is formally a time discretization of the following SDE:
\begin{equation}\label{cont_v1}
\d u_t^{(j)} = C^{up}(u_t)\Gamma^{-1}(y-\cG(u_t^{(j)}))\,\d t+C^{up}(u_t)\Gamma^{-1/2} \, \d W_t^{(j)}.
\end{equation}
Using the definition of the empirical covariance, \eref{cont_v1} can be formulated equivalently as
\begin{equation}\label{cont_v2}
\d u_t^{(j)} = \frac{1}{J}\sum\limits_{k=1}^J\left\langle\cG(u_t^{(k)})-\overline{\cG}_t,(y-\cG(u_t^{(j)}))\,\d t+\sqrt{\Gamma} \, \d W_t^{(j)}\right\rangle_{\Gamma}(u_t^{(k)}-\overline{u}_t)
\end{equation}
with $\langle\cdot,\cdot\rangle_{\Gamma}=\langle\Gamma^{-\frac{1}{2}}\cdot,\Gamma^{-\frac{1}{2}}\cdot\rangle$, 
where $\langle\cdot,\cdot\rangle$ denotes the standard Euclidean inner-product on $\mathbb R^K$. 
The processes $W^{(j)}$ are independent Brownian motions on $\mathbb R^K$. 
We further denote by $\mathcal F_t=\sigma(u_s, s\le t)$ the filtration introduced by the particle dynamics.
{Most of the time we will write $u_t$ or $u(t)$ to emphasize the dependence on time $t$. If the time dependence is clear from the context, we simplify the notation to $u$.} 

The formulation \eref{cont_v2} reveals 
that solutions  satisfy a generalization of the subspace property
of \cite[Theorem 2.1]{StLawIg2013} to continuous time.
\begin{lemma}\label{lin_span}
Assume that $\cG$ is locally Lipschitz and 
let $\cS$ {be} the linear span of $\{u_0^{(j)}\}_{j=1}^J$, then $u_t^{(j)}\in\cS$ for all $(t,j)\in [0,\infty)\times\{1,\dots, J\}$ {almost surely}.
\end{lemma}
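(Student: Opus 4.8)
The plan is to exploit the structure of \eref{cont_v2}, in which both the drift and the diffusion coefficient of each $u_t^{(j)}$ are linear combinations of the centered particles $u_t^{(k)}-\overline u_t$. Since each $u_t^{(k)}-\overline u_t$ lies in the linear span of $\{u_t^{(l)}\}_{l=1}^J$, every infinitesimal increment $\d u_t^{(j)}$ is tangent to the current span of the ensemble; because $\cS$ already contains the initial ensemble and is a closed subspace, the ensemble can never leave $\cS$. I would make this rigorous by projecting the dynamics onto the orthogonal complement of $\cS$ and showing the projected process is identically zero.

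Concretely, since $\cS=\spann\{u_0^{(j)}\}_{j=1}^J$ is finite dimensional it is closed, so the orthogonal projection $P$ onto $\cS^\perp$ is a well-defined bounded linear operator on $\cX$. Set $v_t^{(j)} := P u_t^{(j)}$, so that $v_0^{(j)}=0$ for every $j$ because $u_0^{(j)}\in\cS$. As $P$ is bounded and linear it commutes with the time integral and with the stochastic integral against $W^{(j)}$, so applying $P$ to \eref{cont_v2} and using $P(u_t^{(k)}-\overline u_t)=v_t^{(k)}-\overline v_t$ with $\overline v_t = \frac1J\sum_l v_t^{(l)}$ yields the closed system
\begin{equation*}
\d v_t^{(j)} = \frac{1}{J}\sum_{k=1}^J\left\langle \cG(u_t^{(k)})-\overline{\cG}_t,\,(y-\cG(u_t^{(j)}))\,\d t + \sqrt\Gamma\,\d W_t^{(j)}\right\rangle_\Gamma (v_t^{(k)}-\overline v_t).
\end{equation*}
This is a linear SDE in the $\cX^J$-valued process $(v_t^{(j)})_j$ whose right-hand side vanishes identically when all $v_t^{(k)}=0$.

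The scalar (drift) and vector-valued (diffusion) coefficients multiplying $v_t^{(k)}-\overline v_t$ depend only on a given solution $u$; since $\cG$ is locally Lipschitz and $u$ has continuous paths, these coefficients are continuous, adapted, and hence almost surely locally bounded. Introducing stopping times $\tau_R$ at which $\sum_j\|u_t^{(j)}\|$ first exceeds $R$ to render the coefficients bounded, a standard Gronwall estimate for $\E\sup_{s\le t\wedge\tau_R}\sum_j\|v_s^{(j)}\|^2$ shows that $v\equiv 0$ is the unique solution on $[0,\tau_R]$; letting $R\to\infty$ gives $v_t^{(j)}=0$ for all $t$ up to the maximal existence time, almost surely. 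Pathwise uniqueness upgrades the ``for each fixed $t$'' statement to ``simultaneously for all $t$,'' since the continuous path $t\mapsto v_t^{(j)}$ is then indistinguishable from the zero path. Consequently $u_t^{(j)}=u_t^{(j)}-v_t^{(j)}\in\cS$ for all $(t,j)$ almost surely.

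I expect the main obstacle to be the uniqueness step for the projected linear SDE: one must handle random, only locally bounded coefficients in the possibly infinite-dimensional Hilbert space $\cX$, which forces the localization by stopping times and requires the Gronwall argument to be run on the norm of the $\cX^J$-valued process rather than componentwise in a fixed basis. A secondary point to verify explicitly is that $P$ genuinely commutes with the stochastic integral, which is immediate for a fixed bounded operator but should be recorded, together with the fact that $\overline v_t$ also vanishes when every $v_t^{(k)}=0$, so that the homogeneous structure needed for uniqueness is indeed present.
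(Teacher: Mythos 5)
Your proof is correct, but it takes a genuinely different route from the paper's. The paper argues by restrict-and-compare: local Lipschitz continuity of the coefficients gives a unique $\cX$-valued solution of \eref{cont_v2}; since the right-hand side maps $\cS^J$ into $\cS^J$ (drift and diffusion are linear combinations of the centered ensemble members $u^{(k)}-\overline u$), the same well-posedness argument yields a unique $\cS$-valued solution, which is in particular an $\cX$-valued solution, so uniqueness in $\cX$ forces the two to coincide and every solution must be $\cS$-valued. You instead take the given solution, project onto $\cS^\perp$ via $v^{(j)}=Pu^{(j)}$, and observe that $v$ solves a homogeneous linear SDE with adapted, pathwise locally bounded coefficients determined by $u$ and with $v_0=0$, so localization plus a Gronwall estimate forces $v\equiv 0$. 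Both proofs hinge on the same structural fact --- the vector field factors through $u^{(k)}-\overline u$ --- but the mechanisms differ, and each buys something: the paper's version is shorter given that local well-posedness machinery is being set up anyway (it simply applies the uniqueness theorem twice), whereas yours is pathwise and never invokes uniqueness of the \emph{nonlinear} system, only the trivial uniqueness of a linear SDE; it therefore shows that \emph{any} solution starting in $\cS$ stays in $\cS$, and would survive in settings (weak solutions, merely continuous $\cG$) where uniqueness for \eref{cont_v2} is unavailable. Two small repairs: stopping when $\sum_j\|u_t^{(j)}\|$ exceeds $R$ bounds the coefficients only if $\cG$ maps bounded sets to bounded sets, which in infinite-dimensional $\cX$ is not automatic under a pointwise reading of ``locally Lipschitz,'' so it is cleaner to stop directly on the (a.s.\ continuous, hence pathwise locally bounded) coefficient processes; and, as you rightly flag, for nonlinear $\cG$ both your argument and the paper's really give the conclusion up to the maximal existence time, with globality only established later (in the linear case, Theorem \ref{Thm_Main1}).
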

We do not give all the technical details of the proof. First one needs to show that the initial value problem related to (\ref{cont_v2}) 
has a unique $\cX$-valued solution, which is assured by a local Lipschitz-property of the  drift and diffusion. 
As the vector field on the right hand side of (\ref{cont_v2}) maps $\cS$ in $\cS$, we can show by the same argument 
that there is also a unique $\cS$-valued solution.
Thus by the uniqueness in any $\cX$ both solutions coincide, 
and all solution must be  $\cS$-valued.
\cls{The subspace property reveals the regularization effect of the ensemble of particles in the inverse setting. Due to Lemma \ref{lin_span}, the EKI estimate lies in the subspace spanned by the initial ensemble, which is usually a much smaller space than the original parameter $\cX$. Thus, the discretization via the ensemble of particles can be interpreted as a regularization or stabilization of the inverse problem.}

\subsection{The Linear Problem}

For the whole paper, we will assume that the forward response operator is linear, i.e.\ $\cG(\cdot)=A\cdot$ with $A\in\cL(\cX,\R^K)$.
Then the continuous time limit (\ref{cont_v2}) reads as
\begin{equation}\label{lin_v1}
\d u_t^{(j)} = \frac{1}{J}\sum\limits_{k=1}^J\left\langle A(u_t^{(k)}-\overline{u}_t),(y-Au_t^{(j)})\,\d t+\sqrt{\Gamma} \, \d W_t^{(j)}\right\rangle_{\Gamma}(u_t^{(k)}-\overline{u}_t).
\end{equation}
We simplify notation by defining the empirical covariance operator 
\begin{equation} \label{eq:empCov}
C(u)=\frac1J\sum\limits_{k=1}^J(u^{(k)}-\overline{u})\otimes(u^{(k)}-\overline{u}).
\end{equation}
Thus equation (\ref{lin_v1}) can be rewritten in the form
\begin{equation}\label{lin_v2}
\d u_t^{(j)} = C(u_t)A^*\Gamma^{-1}(y-Au_t^{(j)})\,\d t+C(u_t)A^*\Gamma^{-1/2}\,\d W_t^{(j)}.
\end{equation}

\subsection{Well-posedness of the EnKF inversion}\label{sec_wellposed}

This section is devoted to proving existence and uniqueness of global solutions of the set of coupled SDEs (\ref{lin_v2}). 
Again, the local existence and uniqueness of $\cX$-valued local solutions to (\ref{lin_v2}) is straightforward by the local Lipschitz-property of the drift and diffusion on the right-hand side.
Thus we  rely on the subspace property of Lemma \ref{lin_span}, 
and first show that we can reduce the $\cX$-valued setting without loss of generality to a finite-dimensional setting.

\begin{lemma}
Without loss of generality we assume that the initial ensemble $(u_0^{(j)})_{j\in\{1,\dots,J\}}$ is linearly independent almost surely
and spans a $J$-dimensional vector space $\cS$. 

Then there exists a linear operator $\tilde A:\R^J\to\R^K$ such that equation \eref{lin_v2} restricted to $\cS$ is equivalent to 
\begin{equation}
\label{SDE_finite}
\d v_t^{(j)} = \frac1J\sum\limits_{k=1}^J\left\langle \tilde Av_t^{(k)}-\tilde A\overline v_t,(y-\tilde Av_t^{(j)})\,\d t+\Gamma^{\frac12}\,\d W_t^{(j)}\right\rangle_\Gamma(v_t^{(k)}-\overline v_t)
\end{equation}
for {$v_t^{(j)}\in\R^J$, $\overline v_t:=\frac1J\sum\limits_{k=1}^Jv^{(k)}_t$}, in the following sense: 
For $u_t^{(j)} = \sum\limits_{k=1}^J(v_t^{(j)})_k\cdot u_0^{(k)}$ one has that $u_t$ is a $\cS$-valued solution 
of \eref{lin_v2} if and only if $v_t$ is a solution of (\ref{SDE_finite}).
\end{lemma}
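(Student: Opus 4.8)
The plan is to exhibit the required operator $\tilde A$ as the coordinate representation of $A$ in the basis $\{u_0^{(k)}\}_{k=1}^J$, and then to recognise (\ref{SDE_finite}) as simply equation (\ref{lin_v1}) read off in those coordinates. Since the initial ensemble is assumed linearly independent, the map $T:\R^J\to\cS$ defined by $Tz=\sum_{k=1}^J z_k\,u_0^{(k)}$ is a linear isomorphism onto the $J$-dimensional subspace $\cS$, with bounded inverse (as $\cS$ is finite-dimensional). I would then set $\tilde A:=A\circ T:\R^J\to\R^K$, so that $\tilde Ae_k=Au_0^{(k)}$. Writing each particle in coordinates as $u_t^{(j)}=Tv_t^{(j)}$ immediately gives $Au_t^{(j)}=\tilde Av_t^{(j)}$ and, after averaging, $A\overline u_t=\tilde A\,\overline v_t$.

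The core of the argument is a substitution into the equivalent form (\ref{lin_v1}). The three identities $A(u_t^{(k)}-\overline u_t)=\tilde A(v_t^{(k)}-\overline v_t)$, $\,y-Au_t^{(j)}=y-\tilde Av_t^{(j)}$, and $u_t^{(k)}-\overline u_t=T(v_t^{(k)}-\overline v_t)$ follow from linearity of $A$, of $T$, and of the empirical mean. Inserting them into the right-hand side of (\ref{lin_v1}) shows that the scalar coefficients $\langle A(u_t^{(k)}-\overline u_t),(y-Au_t^{(j)})\,\d t+\sqrt{\Gamma}\,\d W_t^{(j)}\rangle_\Gamma$ turn into exactly the coefficients appearing in (\ref{SDE_finite}) (using $\tilde A v_t^{(k)}-\tilde A\overline v_t=\tilde A(v_t^{(k)}-\overline v_t)$), while the outer vector factor $u_t^{(k)}-\overline u_t$ factors through $T$. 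Because $T$ is a fixed, time-independent bounded linear map, it commutes with both the drift integral and the It\^o integral, so the entire right-hand side equals $T$ applied to the right-hand side of (\ref{SDE_finite}), whereas the left-hand side is $\d u_t^{(j)}=T\,\d v_t^{(j)}$.

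To close the equivalence I would apply $T^{-1}$ to both sides, cancelling $T$ and producing (\ref{SDE_finite}) precisely; injectivity of $T$ makes the correspondence $v_t\leftrightarrow u_t=Tv_t$ a bijection between $\R^J$-valued and $\cS$-valued processes, which yields the ``if and only if'' in both directions. Here I would invoke Lemma \ref{lin_span} to guarantee that genuine solutions of (\ref{lin_v2}) remain in $\cS$, so that restricting the dynamics to $\cS$ loses nothing. The only point requiring a word of care, rather than a genuine obstacle, is the commutation of the deterministic isomorphism $T^{-1}$ with the stochastic integral; this is the standard fact that bounded linear maps pass through It\^o integrals, and the driving noises $W^{(j)}$, living in the data space $\R^K$, are left entirely untouched by the coordinate change.
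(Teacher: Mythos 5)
Your proposal is correct and follows essentially the same route as the paper: your coordinate map $T$ is exactly the inverse of the paper's basis isomorphism $\Phi$, your operator $\tilde A = A\circ T$ coincides with the paper's $\tilde A = A\Phi^{-1}$, and the substitution into \eref{lin_v1} together with the commutation of the fixed linear isomorphism with the It\^o integral (the paper's step $\d\Phi(u^{(j)}(t))=\Phi(\d u^{(j)}(t))$) is the identical argument. Your added remarks on invoking Lemma \ref{lin_span} and on the noise being untouched by the coordinate change are consistent with, and slightly more explicit than, the paper's presentation.
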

\begin{proof}
By Lemma \ref{lin_span}, any $\cS$-valued process $u(t)$ can be uniquely expanded as a linear combination $u^{(j)}(t)=\sum\limits_{l=1}^Jv^{(j)}_l(t)\cdot u^{(l)}(0)$ for every $j\in\{1,\dots,J\}$, $t\ge0$ and coordinates $v^{(j)}_l(t)\in\R$. 
Let $\Phi^{-1}:\R^J\to\cS$ denote the basis isomorphism, i.e.\ {$\Phi:\cS\to\R^J$ with $u=\sum\limits_{l=1}^Jv_lu^{(l)}(0)\stackrel{\Phi}{\mapsto} (v_1,\dots,v_J)^\top$. }
Since $\Phi$ is a linear isomorphism, {\eref{lin_v2} can be equivalently transformed to}
\begin{eqnarray*}
\fl \d\Phi(u^{(j)}(t))
=\Phi(\d u^{(j)}(t)) \\
\fl=\frac1J\sum\limits_{k=1}^J\langle A(u^{(k)}(t)-\overline{u}(t)),(y-Au^{(j)}(t))\,\d t+\Gamma^{\frac12}\,\d W_t^{(j)}\rangle_\Gamma(\Phi(u^{(k)}(t))-\Phi(\overline{u}(t)))\,.
\end{eqnarray*}
{Thus, with $\tilde A = A\Phi^{-1}$, we obtain}
\begin{eqnarray*}
\fl\d\Phi(u^{(j)}(t)) 
&=& \frac1J\sum\limits_{k=1}^J\langle \tilde A\Phi(u^{(k)}(t)-\overline{u}(t)),(y-\tilde A\Phi(u^{(j)}(t)))\,\d t+\Gamma^{\frac12}\,\d W_t^{(j)}\rangle_\Gamma\\ 
&&\qquad\cdot(\Phi(u^{(k)}(t))-\Phi(\overline{u}(t)))
\end{eqnarray*}
{The assertion follows with $v^{(j)}:=\Phi(u^{(j)})$.}
\end{proof}
\begin{remark}
By the previous lemma solving equation (\ref{lin_v2}) is equivalent to solving the finite dimensional equation (\ref{SDE_finite}). Thus, to simplify notation we will assume without loss of generality that $\cX=\R^I,\ I\in\N,\ I\le J$. In the case of linearly independent initial ensemble we can assume $I=J$.

For the study of the dynamical behavior of the ensemble, we will {sometimes} require the following assumption for results in the parameters space:
\begin{equation}\label{A1}
\mbox{The linear operator $\tilde A$ defined above is one-to-one.}
\end{equation}
\cls{Note that Assumption \eref{A1} seems to be {a rather strict assumption: It requires that the forward operator  ``sees everything'' and secondly, this means that} $(\tilde A\Phi(u_0^{(j)}))_{j\in\{1,\dots,J\}}\in\R^K$ is linearly independent. 
This implies the restriction on the number of particles $J\le K$.
However, note that this assumption is on the operator $\tilde A$, i.e. we do not assume that $A$ is one-to-one. The discretization of the parameter space via the ensemble of particles acts as a regularization of the inverse problem in this setting.  
We will need assumption \eref{A1} only when we want to prove dynamical properties in \textit{parameter space}. 
This makes sense as we cannot hope for convergence to the true parameter if the forward operator is indifferent with respect to some components of this parameter value. 
Our convergence results in the \textit{observation} space hold without assumption \eref{A1}.}
\end{remark}

In order to prove the existence and uniqueness of global solutions
we rewrite the set of coupled SDEs (\ref{lin_v2}) as a single SDE of the following form:
\begin{equation*}
du_t = F(u_t)\,dt+G(u_t)\, dW_t,
\end{equation*}
with $u_t = (u_t^{(j)})_{j\in\{1,\dots,J\}}\in\R^{IJ\times1}, W_t = (W_t^{(j)})_{j\in\{1,\dots,J\}}\in\R^{J^2\times1}$ and
\begin{eqnarray*}
F(x) &= (C(x)A^* \Gamma^{-1}(y-Ax^{(j)}))_{j\in\{1,\dots,J\}}\in\R^{IJ\times1},\\
G(x) &= \diag(C(x)A^* \Gamma^{-\frac12})_{j\in\{1,\dots,J\}}\in\R^{IJ\times J^2},
\end{eqnarray*}
where $x=(x^{(j)})_{j\in\{1,\dots,J\}}\in\R^{IJ\times1}$ and $\diag(B_j )_{j\in\{1,\dots,J\}}$ is a diagonal block matrix with matrices $(B_j)_{j\in\{1,\dots,J\}}$ on the diagonal.
For a given matrix $B=(b_{ij})_{ij}\in\mathbb R^{n\times m}, \ m,n\in\mathbb N$, the Frobenius norm $\|B\|_F$ is defined by $\|B\|_F^2=\trace{B^\top  B}=\sum_{i,j}b_{ij}^2\ge \|B\|_2^2$.

We will now formulate and prove the main result of this section on the well-posedness of the EnKF inversion.
\begin{theorem}\label{Thm_Main1}
Let 
$
u_0=(u_0^{(j)})_{j\in\{1,\dots,J\}}
$
be $\cF_0$-measurable maps $u_0^{(j)}:\Omega\to\cX$ {which are linearly independent almost surely}. Then for all $T\ge0$ there exists a unique strong solution $(u_t)_{t\in[0,T]}$ (up to $\PP$-indistinguishability) of the set of coupled SDEs \eref{lin_v2}.
\end{theorem}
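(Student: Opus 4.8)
The plan is to reduce the problem to a finite-dimensional Itô SDE, obtain a local solution from the local Lipschitz property of the coefficients, and then rule out explosion by exploiting the dissipative structure of the ensemble dynamics. First I would invoke the subspace property (Lemma \ref{lin_span}) together with the preceding reduction lemma to replace \eref{lin_v2} by the equivalent finite-dimensional system \eref{SDE_finite}, equivalently the single equation $du_t = F(u_t)\,dt + G(u_t)\,dW_t$ on $\R^{IJ}$ introduced above, arguing conditionally on $\cF_0$ on the almost-sure event that the initial ensemble is linearly independent, so that $I=J$. Since $C(u)$ is quadratic, the drift $F$ is cubic and the diffusion $G$ is quadratic in $u$; both are therefore locally Lipschitz with polynomial growth, and classical SDE theory yields a unique strong solution up to an explosion time $\tau_\infty = \lim_{R\to\infty}\tau_R$, where $\tau_R = \inf\{t : |u_t|\ge R\}$. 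The whole difficulty is to show $\tau_\infty = \infty$ almost surely.

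The key observation is that the empirical covariance contracts in the mean. Writing $e_t^{(j)} = u_t^{(j)} - \overline u_t$ for the ensemble deviations, Itô's formula applied to the ensemble spread $S_t = \sum_{j=1}^J |e_t^{(j)}|^2$ yields, after collecting the drift produced by $-C(u_t)A^*\Gamma^{-1}A e_t^{(j)}$ and the Itô correction coming from the centred noises $\tilde W^{(j)} = W^{(j)} - \frac1J\sum_k W^{(k)}$, a relation of the form $dS_t = -\frac{J+1}{J^2}\, Q_t\,dt + dM_t$ with $Q_t\ge 0$ and $M_t$ a local martingale. Hence $S_t$ has non-positive drift, so $S_0 + M_t = S_t + \frac{J+1}{J^2}\int_0^t Q_s\,ds \ge S_t \ge 0$ is a non-negative local martingale and therefore a supermartingale; by the supermartingale maximal inequality $\sup_{t\le T} S_t < \infty$ almost surely, uniformly over the localisation in $R$. (The analogous computation in observation space, for $\sum_j|\Gamma^{-1/2}A e_t^{(j)}|^2$, gives the same sign.) I would stress at this point that the classical coercivity bound $\langle u,F(u)\rangle + \frac12\|G(u)\|_F^2 \le c(1+|u|^2)$ genuinely fails, because the mean/spread cross terms in $\langle u,F(u)\rangle$ are cubic; it is this supermartingale structure, rather than a one-step growth estimate, that does the work.

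With the spread controlled I would close the argument on the ensemble mean. Since $\|C(u_t)\| \le \frac1J S_t$, the drift $C(u_t)A^*\Gamma^{-1}(y - A\overline u_t)$ and diffusion $C(u_t)A^*\Gamma^{-1/2}$ governing $\overline u_t$ have coefficients dominated by the almost-surely finite random variable $\sup_{t\le T} S_t$; thus on $[0,T]$ the mean obeys an SDE with affine, almost-surely bounded coefficients, which cannot explode (localise on $\{\sup_{t\le T}S_t \le n\}$ and apply the standard linear-growth estimate, then let $n\to\infty$). Since $|u_t|^2 = S_t + J|\overline u_t|^2$, neither part of the state can reach infinity before time $T$, so $\tau_\infty > T$ almost surely for every $T\ge 0$, which gives global existence. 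Global uniqueness follows from the pathwise local uniqueness together with non-explosion, and $\PP$-indistinguishability from continuity of the paths.

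I expect the main obstacle to be precisely the superlinear growth combined with the mean--spread coupling: the product $S_t|\overline u_t|^2$ that appears in the drift of $|\overline u_t|^2$ cannot be tamed by a naive Gronwall estimate on second moments, and no global Lipschitz or standard coercivity condition is available. The resolution rests entirely on the sign-definiteness of the covariance drift, which turns the spread $S_t$ into a supermartingale with almost-surely finite running supremum and thereby breaks the feedback loop. Extracting the correct sign requires some care in the Itô corrections, since the driving noises $\tilde W^{(j)}$ are correlated across the ensemble index $j$.
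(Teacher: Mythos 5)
Your proposal is correct, and it takes a genuinely different route to non-explosion than the paper. After the identical preliminaries (subspace property, reduction to the finite-dimensional system, local Lipschitz coefficients, local solution up to an explosion time), the paper invokes Khasminskii's non-explosion theorem with the single shifted quadratic Lyapunov function $V(u)=\frac1J\sum_j\|u^{(j)}-\overline u\|^2+\|\overline u-\tilde u\|^2$, where $\tilde u$ is chosen so that $\Gamma^{-1/2}A\tilde u=y_1$ is the projection of $y$ onto $\cR(\Gamma^{-1/2}A)$, and then argues $LV\le 0$ using the orthogonality $\langle\Gamma^{-1/2}Ae^{(l)},y_2\rangle=0$ and Lemma \ref{nonneg}. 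You instead split the state into spread and mean: the deviations $e^{(j)}$ satisfy the autonomous, $y$-free equation \eref{eq:sde_e}, the It\^o computation (the same one underlying Lemma \ref{Monotonie} and Proposition \ref{Prop_Parameter}) shows the spread plus a nonnegative increasing process is a nonnegative local martingale, hence a supermartingale, and the maximal inequality then bounds $\|C(u_t)\|\le S_t/J$ pathwise, after which the mean is an SDE with affine drift and a.s.\ bounded random coefficients. What your route buys is notable: it never touches the mixed term $\sum_l\langle\overline u-\tilde u,e^{(l)}\rangle\langle\Gamma^{-1/2}Ae^{(l)},\Gamma^{-1/2}A(\overline u-\tilde u)\rangle=J\,(\overline u-\tilde u)^\top C(e)A^*\Gamma^{-1}A(\overline u-\tilde u)$ appearing in the paper's $LV$; since $C(e)$ and $A^*\Gamma^{-1}A$ need not commute, this quadratic form is \emph{not} covered by Lemma \ref{nonneg} and can be negative (e.g.\ $J=2$, $K=1$, $\Gamma=1$, $A=(1,1)$, $e^{(1)}=-e^{(2)}=(1,0)^\top$, $\overline u-\tilde u=(-1,2)^\top$ gives $LV=1>0$, and under scaling this configuration makes $LV$ grow quartically while $V$ grows only quadratically), so the paper's concluding inequality $LV\le 0$ needs repair at exactly the point your decomposition avoids; your supermartingale argument, which only uses the sign-definiteness of the $y$-free deviation dynamics, is the more robust of the two. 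What the paper's route buys, when the sign does work out, is brevity: one display verifying a citable hypothesis ($LV\le cV$ plus radial unboundedness) with no maximal inequality or two-stage localization.

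Two small points to tighten in your write-up: since the theorem assumes no integrability of $u_0$, the supermartingale maximal inequality must be applied conditionally on $\cF_0$ (or after localizing on $\{S_0\le m\}$), in the same spirit as your conditioning for linear independence; and the pathwise linear-growth estimate for $\overline u_t$ should be run on the stopped interval $[0,T\wedge\tau_R\wedge\sigma_n]$, $\sigma_n=\inf\{t:S_t\ge n\}$, so that the stochastic integral is a true martingale with bounded quadratic variation before letting $R,n\to\infty$. With these standard precautions your argument is complete.
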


\begin{proof}
For the proof we will assume without loss of generality that $\cX=\mathbb{R}^I$ for $I$ sufficiently large, as discussed before.
The proof of existence and uniqueness of local strong solutions for \eref{lin_v2} (up to a stopping-time) is standard, 
due to the local Lipschitz property of the drift $F$ and the diffusion $G$.
Note that both are polynomials.

The global existence of a strong solution is based on stochastic Lyapunov theory. See for example Theorem 4.1 of \cite{Hasminski}.
We only need to construct a function $V\in C^2(\cX;\R_+)$ such that for some constant $c>0$
\begin{equation}
 LV(x):= \nabla V(x) \cdot F(x)+\frac12\trace(G^T(x) \mathrm{Hess}[V](x)G(x))\le cV(x)  \label{e:A}
\end{equation}
and
\begin{equation}
 \inf_{|x|>R}V(x)\to\infty\ \mbox{as $R\to\infty$} \label{e:B}
\end{equation}
hold true. 
 
We can {uniquely} decompose $y\in\R^K$ as $y=y_1+y_2$, with $y_1\in\cR(\Gamma^{-\frac12}A)$ and $y_2\in\cR(\Gamma^{-\frac12}A)^\perp$, where $\cR(\Gamma^{-\frac12}A)$ denotes the image of $\Gamma^{-\frac12}A$. We fix $\tilde u\in\R^J$ such that $\Gamma^{-\frac12}A\tilde u=y_1$ and define the Lyapunov function
\begin{equation*}
V(u):= V_1(u) + V_2(u) = \frac1J\sum\limits_{j=1}^J\|u^{(j)}-\overline u\|^2+\|\overline u-\tilde u\|^2.
\end{equation*}
Obviously, (\ref{e:B}) is satisfied.

The generator $L$ applied to $V$ is given by $LV=LV_1+LV_2$ with
\begin{eqnarray*}
LV_1(u) 	&=& -\frac{J+1}{J^3}\sum\limits_{j, l=1}^J\langle u^{(j)}-\overline u,u^{(l)}-\overline u\rangle\langle\Gamma^{-\frac12}A(u^{(l)}-\overline u),\Gamma^{-\frac12}A(u^{(j)}-\overline u)\rangle\\
LV_2(u) 	
		&=&-\frac2J\sum\limits_{l=1}^J\langle \overline u-\tilde u,u^{(l)}-\overline u\rangle\langle\Gamma^{-\frac12}A(u^{(l)}-\overline u),\Gamma^{-\frac12}A(\overline u-\tilde u)\rangle\\
		&&\quad+\frac1{J^3}\sum\limits_{j, l=1}^J\langle u^{(j)}-\overline u,u^{(l)}-\overline u\rangle\langle\Gamma^{-\frac12}A(u^{(l)}-\overline u),\Gamma^{-\frac12}A(u^{(j)}-\overline u)\rangle,
\end{eqnarray*}
where we used $\langle\Gamma^{-\frac12}A(u^{(l)}-\overline u),y_2\rangle = 0$ for all $l\in\{1,\dots,J\}$ wich is true by construction. 
Thus, as $A^\star\Gamma^{-1}A$ is a symmetric non-negative matrix by Lemma \ref{nonneg} the nonnegativity of the generator follows.
\begin{eqnarray*}
 LV(u) 
		&=& -\frac2J\sum\limits_{l=1}^J\langle \overline u-\tilde u,u^{(l)}-\overline u\rangle\langle\Gamma^{-\frac12}A(u^{(l)}-\overline u),\Gamma^{-\frac12}A(\overline u-\tilde u)\rangle\\
		&&-\frac1{J^2}\sum\limits_{j, l=1}^J\langle u^{(j)}-\overline u,u^{(l)}-\overline u\rangle\langle\Gamma^{-\frac12}A(u^{(l)}-\overline u),\Gamma^{-\frac12}A(u^{(j)}-\overline u)\rangle\\
		&\le& 0.
\end{eqnarray*}
Thus (\ref{e:A}) holds true, for all $c>0$.
\end{proof}

\section{Quantification of the Ensemble Collapse}\label{Asymp_beh}
\label{sec4}
The dynamics of the Ensemble Kalman filter as presented here can be decomposed into two parts: 
{
\begin{itemize}
\item Ensemble collapse. This means convergence of all ensemble members to their joint mean (``The estimator becomes more confident'').
\item Convergence of the ensemble mean. {This means that} the ensemble mean will tend to a parameter value which is consistent with the data. 
\end{itemize}
Those two notions are totally different in concept but also strongly intertwined in the dynamics of the EnKF (see also the discussion at the beginning
of section \ref{sec5}).
}

We start by quantifying the ensemble collapse.
We will present results in the data (or observation) space as well as in the parameter space.

{For the further analysis, we introduce the centered quantities
\begin{equation*}
e^{(j)} = u^{(j)}-\overline{u}, \qquad r^{(j)} = u^{(j)}-u^{\dagger},
\end{equation*}
where $e^{(j)}$ denotes difference of each particles to the mean and $r^{(j)}$ denotes the residuals. Here, the data $y$ is the perturbed image of a truth $u^\dagger\in\cX$ under $A$, i.e.\ $y = Au^\dagger+\eta$. The quantities satisfy} the following equations {(note that $C(u) = C(e)$ as the mean of the $e^{(j)}$ vanishes)}
\begin{eqnarray}
\d e_t^{(j)} 	&= -C({e_t})A^*\Gamma^{-1}Ae_t^{(j)}\,\d t+C({e_t})A^*\Gamma^{-\frac12}\,\d (W_t^{(j)}-\overline W_t),\label{eq:sde_e}\\
\d r_t^{(j)}	&= \d u_t^{(j)} = C(u_t)A^*\Gamma^{-1}(y-Au_t^{(j)})\,\d t+C(u_t)A^*\Gamma^{-1/2}\,\d W_t^{(j)},
\end{eqnarray}
with $\overline W_t:=\frac1J\sum\limits_{j=1}^JW^{(j)}$.
{ The dynamical behavior of the empirical mean is given by} 
\begin{equation*}
\d\overline u_t=\frac1J\sum\limits_{k=1}^J(u_t^{(k)}-\overline u_t)\langle A(u_t^{(k)}-\overline u_t),(y-A\overline u_t)\,\d t+\Gamma^{\frac12}\,\d\overline W_t\rangle_{\Gamma}.
\end{equation*}
 To simplify notation, we also introduce the transformed quantities
\begin{equation*}
\mathfrak r^{(j)}:=\Gamma^{-\frac12}Ar^{(j)},\qquad \mathfrak e^{(j)}:=\Gamma^{-\frac12}Ae^{(j)}=\mathfrak r^{(j)}-\overline{\mathfrak r}
\end{equation*}
denoting the residuals {in observation space} and the mapped difference of each particle to the empirical mean.

{We will now make a first step towards proving ensemble collapse. As we work with SDEs, any dynamical property can only hold in some probabilistic sense. The following lemma shows that we have ensemble collapse in the $L^p$ sense, with the upper bound for valid parameters $p$ being dependent on the number of particles $J$.}
{\begin{lemma}\label{Monotonie}
Let $p\in[2,J+3)$ and
$
u_0=(u_0^{(j)})_{j\in\{1,\dots,J\}}
$
be $\cF_0$-measurable maps $u_0^{(j)}:\Omega\to\cX$ such that 
$
\E[\frac1J\sum\limits_{j=1}^J|\mathfrak e_0^{(j)}|^p]<\infty. 
$
Then \begin{equation*}t\in[0,\infty)\mapsto\|\mathfrak e_t\|_{\cL_p(\Omega,\R^K)}:=\E\left[\frac1J\sum\limits_{j=1}^J|\mathfrak e_t^{(j)}|^p\right]^{\frac1p}\end{equation*} is monotonically decreasing in $t$. Furthermore there exists a constant $C>0$ such that for all $t\ge0$ \begin{equation*}\int_0^t\E\left[\frac1J\sum\limits_{j=1}^J|\mathfrak e_s^{(j)}|^{p+2}\right]\,\d s<C.\end{equation*}
\end{lemma}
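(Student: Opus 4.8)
The plan is to pass to the observation-space variables $\mathfrak e^{(j)}=\Gamma^{-\frac12}Ae^{(j)}$ and exploit that they satisfy a closed, self-contained SDE. Applying the linear map $\Gamma^{-\frac12}A$ to \eref{eq:sde_e} and using $A^*\Gamma^{-1}Ae^{(j)}=A^*\Gamma^{-\frac12}\mathfrak e^{(j)}$, I would first record that
\begin{equation*}
\d\mathfrak e_t^{(j)} = -D_t\,\mathfrak e_t^{(j)}\,\d t + D_t\,\d\bigl(W_t^{(j)}-\overline W_t\bigr), \qquad D_t:=\Gamma^{-\frac12}AC(e_t)A^*\Gamma^{-\frac12}=\frac1J\sum_{k=1}^J\mathfrak e_t^{(k)}\otimes\mathfrak e_t^{(k)},
\end{equation*}
so that $D_t$ is exactly the empirical covariance of the mapped ensemble (symmetric and nonnegative) and the $\mathfrak e^{(j)}$ remain in the hyperplane $\sum_j\mathfrak e^{(j)}=0$. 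A short computation gives the quadratic variation $\d\langle\mathfrak e^{(j)}\rangle_t=(1-\tfrac1J)D_t^2\,\d t$, the factor $1-\tfrac1J$ arising from the centred noise $W^{(j)}-\overline W$.

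Next I would apply It\^o's formula to $\phi(x)=|x|^p$, with $\nabla\phi(x)=p|x|^{p-2}x$ and $\mathrm{Hess}\,\phi(x)=p|x|^{p-2}I+p(p-2)|x|^{p-4}x\otimes x$. Summing over $j$, averaging and taking expectations (the stochastic integral being a true martingale after localisation), the drift of $\frac1J\sum_j|\mathfrak e^{(j)}|^p$ is
\begin{equation*}
\frac pJ\sum_{j=1}^J\Bigl[-|\mathfrak e^{(j)}|^{p-2}\langle\mathfrak e^{(j)},D\mathfrak e^{(j)}\rangle + \tfrac12(1-\tfrac1J)|\mathfrak e^{(j)}|^{p-2}\trace(D^2) + \tfrac{p-2}2(1-\tfrac1J)|\mathfrak e^{(j)}|^{p-4}|D\mathfrak e^{(j)}|^2\Bigr],
\end{equation*}
one genuinely dissipative term and two nonnegative It\^o-correction terms. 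For $p=2$ the last term vanishes and, using $\sum_j\langle\mathfrak e^{(j)},D\mathfrak e^{(j)}\rangle=J\,\trace(D^2)$, the bracket collapses to $-(1+\tfrac1J)\trace(D^2)\le0$, which anchors the computation.

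The heart of the matter is to show that this drift is $\le0$ on the whole range $p\in[2,J+3)$. I would reduce it to a purely deterministic inequality for the Gram matrix $G=(\langle\mathfrak e^{(j)},\mathfrak e^{(k)}\rangle)_{j,k}$, which is symmetric, positive semidefinite and satisfies $G\mathbf 1=0$; indeed $\langle\mathfrak e^{(j)},D\mathfrak e^{(j)}\rangle=\tfrac1J(G^2)_{jj}$, $\trace(D^2)=\tfrac1{J^2}\trace(G^2)$, $|D\mathfrak e^{(j)}|^2=\tfrac1{J^2}(G^3)_{jj}$ and $|\mathfrak e^{(j)}|^2=G_{jj}$. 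The required estimate then reads $\sum_j G_{jj}^{(p-2)/2}\bigl[\tfrac1J(G^2)_{jj}-\tfrac{1-1/J}{2J^2}\trace(G^2)-\tfrac{(p-2)(1-1/J)}{2J^2}(G^3)_{jj}/G_{jj}\bigr]\ge0$, which I would attack with Cauchy--Schwarz $(G^2)_{jj}\ge G_{jj}^2$, the spectral bounds $\trace(G^3)\le\|G\|_2\trace(G^2)$ and $\|G\|_2\le\trace(G)$, together with power--mean inequalities for the weights $G_{jj}^{(p-2)/2}$. I expect this inequality to be the main obstacle: the threshold $J+3$ is forced by the balance between the Hessian (radial) term and the dissipation, as one sees on the equally weighted simplex configuration $\langle\mathfrak e^{(j)},\mathfrak e^{(k)}\rangle=\beta\bigl(\tfrac{J}{J-1}\delta_{jk}-\tfrac1{J-1}\bigr)$, for which the bracket is proportional to $(J+3-p)$. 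Controlling the general, non-equally-weighted case so that this configuration remains extremal is the delicate point, since degenerate (low-rank) ensembles must also be handled.

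Finally, monotonicity of $t\mapsto\|\mathfrak e_t\|_{\mathcal L_p}$ is immediate once the drift is nonpositive, as $\tfrac{\d}{\d t}\E[\tfrac1J\sum_j|\mathfrak e_t^{(j)}|^p]\le0$. For the integrated bound I would retain a strictly negative portion of the dissipation: since $\langle\mathfrak e^{(j)},D\mathfrak e^{(j)}\rangle\ge\tfrac1J|\mathfrak e^{(j)}|^4$, for $p<J+3$ one keeps a constant $c>0$ with drift $\le-c\,\tfrac1J\sum_j|\mathfrak e^{(j)}|^{p+2}$; integrating from $0$ to $t$ and using $\E[\tfrac1J\sum_j|\mathfrak e_t^{(j)}|^p]\ge0$ yields $c\int_0^t\E[\tfrac1J\sum_j|\mathfrak e_s^{(j)}|^{p+2}]\,\d s\le\E[\tfrac1J\sum_j|\mathfrak e_0^{(j)}|^p]=:C$, the claimed estimate. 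Throughout, passing from the localised It\^o identity to the expectation inequalities is justified by a stopping-time argument together with the \emph{a priori} $L^p$-bound at $t=0$ and monotone/dominated convergence.
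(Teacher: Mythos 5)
Your reduction to the Gram-matrix inequality is precisely where the proposal stops being a proof, and unfortunately the inequality you need is not just unproven but \emph{false} on the upper part of the range $p\in[2,J+3)$. Test the degenerate rank-one configuration you yourself flagged as delicate: $\mathfrak e^{(1)}=av$, $\mathfrak e^{(2)}=-av$, $\mathfrak e^{(j)}=0$ for $j\ge3$, with $|v|=1$. Then $D=\frac{2a^2}{J}\,v\otimes v$, and your per-particle bracket, summed with the weights $|\mathfrak e^{(j)}|^{p-2}$, evaluates to
\begin{equation*}
2a^{p+2}\Bigl[-\frac2J+\frac{2(J-1)(p-1)}{J^3}\Bigr],
\end{equation*}
which is strictly \emph{positive} whenever $p>J+2+\frac{1}{J-1}$ — a threshold strictly below $J+3$ for every $J\ge3$. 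So the drift of the natural functional $\frac1J\sum_j|\mathfrak e^{(j)}|^p$ is not pointwise nonpositive on all of $[2,J+3)$: the equally weighted simplex configuration (where the bracket vanishes exactly at $p=J+3$) is \emph{not} extremal, and low-rank ensembles break the estimate. Since both your monotonicity claim and your retained-dissipation step for $\int_0^t\E\bigl[\frac1J\sum_j|\mathfrak e_s^{(j)}|^{p+2}\bigr]\,\d s<C$ rest on this pointwise sign, the argument collapses for $p$ near $J+3$ (your $p=2$ anchor computation, your quadratic-variation factor $1-\frac1J$, and your localization/Fatou handling are all correct and match the paper).

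The paper avoids this obstruction by a different choice of Lyapunov function: the ``transposed'' functional $V_p(\mathfrak e)=\frac1J\sum_{m=1}^K\bigl(\sum_{j=1}^J|\mathfrak e_m^{(j)}|^2\bigr)^{\frac p2}$, i.e.\ powers of sums over the \emph{ensemble} index taken componentwise in observation space, rather than per-particle Euclidean norms (the two coincide only at $p=2$). For $V_p$, It\^o's formula makes the dissipation and both It\^o corrections combine into a \emph{single} term $-C(p,J)\sum_m\bigl(\sum_k|\mathfrak e_m^{(k)}|^2\bigr)^{\frac p2-1}\sum_n\bigl(\sum_l\mathfrak e_m^{(l)}\mathfrak e_n^{(l)}\bigr)^2$ with the explicit constant $C(p,J)=\frac{p}{J^2}\bigl(1-\frac{(p-2+J)(J-1)}{2J^2}-\frac{p-2}{2J^2}\bigr)$, and one checks that $C(p,J)>0$ if and only if $p<J+3$: the threshold appears as the sign of a constant, uniformly over \emph{all} configurations, degenerate ones included, so no extremality analysis of the Gram matrix is needed. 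Bounding the dissipative term below by $JV_{p+2}(\mathfrak e)$ gives the integrated $(p+2)$-moment bound after the same localization/Fatou passage you describe, and Lemma \ref{lem:pTransformation} (equivalence of the two ``$p$-norms'' with explicit $J$- and $K$-dependent constants) transfers the conclusions to $\frac1J\sum_j|\mathfrak e^{(j)}|^p$. To repair your proposal you would have to either switch to this transposed functional or find a genuinely new argument showing that the positive-drift configurations are dynamically negligible; the pointwise route you outline cannot cover the stated range.
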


\begin{proof}
We will prove the assertion in the case $p=2$, in order to give the key ideas. 
The case $p>2$ is very similar, but much more  technical. 
We postpone all details in that case to the appendix.

Applying $\Gamma^{-\frac12}A$ to $e^{(j)}$ implies that the quantity $\mathfrak e^{(j)}$ satisfies {(see \eref{eq:sde_e} and \eref{eq:empCov})}
\begin{eqnarray*}
\d \mathfrak e_t^{(j)} &=-C(\mathfrak e_t)\mathfrak e_t^{(j)}\,\d  t+C(\mathfrak e_t)\,\d (W_t^{(j)}-\overline{W}_t)\\
			&=-\frac1J\sum\limits_{k=1}^J\mathfrak e_t^{(k)}\langle\mathfrak e_t^{(k)},\mathfrak e_t^{(j)}\rangle\,\d  t+\frac1J\sum\limits_{k=1}^J\mathfrak e_t^{(k)}\langle \mathfrak e_t^{(k)},\d (W_t^{(j)}-\overline{W}_t)\rangle.
\end{eqnarray*}
It\^{o}'s formula gives
\begin{eqnarray*}
\d |\mathfrak e_t^{(j)}|^2	&=2\langle\mathfrak e_t^{(j)},\d \mathfrak e_t^{(j)}\rangle+\langle\d \mathfrak e_t^{(j)},\d \mathfrak e_t^{(j)}\rangle\\
			&=-\frac2J\sum\limits_{k=1}^J\left\langle\mathfrak e_t^{(j)},\mathfrak e_t^{(k)}\right\rangle^2\,\d  t+2\mathfrak e_t	^{(j)T}C(\mathfrak e_t)\,\d (W_t^{(j)}-\overline{W}_t)\\
			&\quad+\frac{1}{J^2}\sum\limits_{k,l=1}^J\left\langle\mathfrak e_t^{(k)},\mathfrak e_t^{(l)}\right\rangle\left\langle\mathfrak e_t^{(k)},\d (W_t^{(j)}-\overline{W})\right\rangle\left\langle\mathfrak e_t^{(l)},\d (W_t^{(j)}-\overline{W})\right\rangle
\end{eqnarray*}
and with Lemma \ref{BM_innerproduct} to evaluate the It\^o correction we get
\begin{equation*}
\fl \d |\mathfrak e_t^{(j)}|^2 =-\frac2J\sum\limits_{k=1}^J\langle\mathfrak e_t^{(j)},\mathfrak e_t^{(k)}\rangle^2\,\d  t+2\mathfrak e_t^{(j)T}C(\mathfrak e_t)\,\d (W_t^{(j)}-\overline{W}_t)+\frac{J-1}{J^3}\sum\limits_{k,l=1}^J\langle\mathfrak e_t^{(k)},\mathfrak e_t^{(l)}\rangle^2\,\d  t\,.
\end{equation*}
Summing over all particles leads to
\begin{eqnarray*}
\fl\d \left(\frac1J\sum\limits_{j=1}^J|\mathfrak e_t^{(j)}|^2\right) 	
									&=& -\frac{J+1}{J^3}\sum\limits_{j,k=1}^J\left\langle\mathfrak e_t^{(j)},\mathfrak e_t^{(k)}\right\rangle^2\,\d  t+\frac2J\sum\limits_{j=1}^J\mathfrak e_t^{(j)\top}C(\mathfrak e_t)\,\d (W_t^{(j)}-\overline{W}_t)\\
									&=& -\frac{J+1}{J^3}\sum\limits_{j,k=1}^J\left\langle\mathfrak e_t^{(j)},\mathfrak e_t^{(k)}\right\rangle^2\,\d  t+\frac2J\sum\limits_{j=1}^J\mathfrak e_t^{(j)\top}C(\mathfrak e_t)\,\d W_t^{(j)}.
\end{eqnarray*}
The last step follows from $\sum_j \mathfrak e^{(j)} = 0$. This yields
\begin{equation}\label{eq:IntSum_e}
\fl\eqalign{
\frac1J\sum\limits_{j=1}^J|\mathfrak e_t^{(j)}|^2 &- \frac1J\sum\limits_{j=1}^J|\mathfrak e_0^{(j)}|^2 \cr
&= -\frac{J+1}{J^3}\int_0^t\sum\limits_{j,k=1}^J\langle\mathfrak e_t^{(j)},\mathfrak e_t^{(k)}\rangle^2\,\d  t +\frac2J\int_0^t\sum\limits_{j=1}^J\mathfrak e_t^{(j)\top}C(\mathfrak e_t)\,\d W_t^{(j)}.}
\end{equation}
Now we cannot simply take the expectation, as we do not know that the stochastic integral is a martingale. We need a localization.
Set $t,s\ge0$ and let  $(\tau_n)_{n\in\N}$ with $\tau_n\stackrel{n}{\to}\infty$ a.s.\  be a sequence of deterministically bounded stopping times, such that 
\begin{equation*}
\int_s^{s+(t\wedge\tau_n)}\mathfrak e_s^{(j)T}C(\mathfrak e_s)\,\d  W_s^{(j)}
\end{equation*} 
is a martingale for every $j\in\{1,\cdots,J\}$. {This is possible by definition of local martingales, with any stochastic integral being one.}
For example we can take for $\tau_n$ the minimum of $n$ and the first exit time of $\mathfrak{e}_s$ at radius $n$. 
Then, for all $n\in\N$, from \eref{eq:IntSum_e} (after rebasing the integration interval from $[0,t]$ to $[s, s+t]$) we obtain
\begin{equation*}
\fl\E\left[\frac1J\sum\limits_{j=1}^J|\mathfrak e_{s+(t\wedge\tau_n)}^{(j)}|^2\right] -\E\left[\frac1J\sum\limits_{j=1}^J|\mathfrak e_s^{(j)}|^2\right]= -\E\left[\int_s^{s+(t\wedge\tau_n)}\frac{J+1}{J^3}\sum\limits_{j,k=1}^J\langle \mathfrak e_r^{(j)},\mathfrak e_r^{(k)}\rangle^2\,\d  r\right]
\end{equation*}
{As} $\tau_n\to\infty$,
applying Fatou's lemma on the left hand side and applying the monotone convergence theorem on the right hand side gives

\begin{equation}\label{Inequality1}
\fl\E\left[\frac1J\sum\limits_{j=1}^J|\mathfrak{e}_{s+t}^{(j)}|^2\right] - \E\left[\frac1J\sum\limits_{j=1}^J|\mathfrak{e}_s^{(j)}|^2\right] \le -\E\left[\int_s^{s+t}\frac{J+1}{J^3}\sum\limits_{j,k=1}^J\langle \mathfrak e_r^{(j)},\mathfrak e_r^{(k)}\rangle^2\,\d  r\right]\le0,
\end{equation}
which implies that $\E[\frac1J\sum\limits_{j=1}^J|\mathfrak{e}_t^{(j)}|^2]$ is monotonically decreasing in $t$. 

Finally,
\begin{equation*}
\fl\int_0^t\E\left[\frac{J+1}{J^3}\sum\limits_{j=1}^J|\mathfrak{e}_s^{(j)}|^4\right]\,\d s
\le\int_0^t\E\left[\frac{J+1}{J^3}\sum\limits_{j,k=1}^J\langle \mathfrak{e}_s^{(j)},\mathfrak{e}_s^{(k)}\rangle^2\right]\,\d s\le\E\left[\frac1J\sum\limits_{j=1}^J|\mathfrak{e}_0^{(j)}|^2\right],
\end{equation*}
{where the first inequality is trivial by inserting non-negative terms in the sum and the second inequality is \eref{Inequality1} with $s=0$. This} proves the second claim.

Let us finally remark that  $\tau_n\to\infty$ necessarily holds. If we assume that  $\tau_n\to\tau_*$ 
then the previous argument with $s=0$ and arbitrary $T>0$ 
gives $\E[\frac1J\sum\limits_{j=1}^J|\mathfrak{e}_{t\wedge\tau_*}^{(j)}|^2] <\infty$.
Thus $t<\tau_*$ for our choice of stopping time.

\end{proof}

The main obstacle in quantifying the ensemble collapse is {proving that the stochastic integral in \eref{eq:IntSum_e} is actually a true martingale.} 
See Lemma \ref{martingale}  for details. 

\begin{theorem}\label{Thm_main2}
Let
$
u_0=(u_0^{(j)})_{j\in\{1,\dots,J\}}
$
be $\cF_0$-measurable random variables $u_0^{(j)}:\Omega\to\cX$ such that 
$
C_0:=\E[\frac1J\sum\limits_{j=1}^J|\mathfrak e_0^{(j)}|^2]<\infty. 
$
Then, the ensemble collapse is quantified by
\begin{equation}\label{eq:enscoll}
\E\left[\frac1J\sum\limits_{j=1}^J|\mathfrak e_t^{(j)}|^2\right]
\le\frac{1}{\frac{J+1}{J^2}t+\frac1{C_0}}.
\end{equation}
\end{theorem}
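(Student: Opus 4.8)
The plan is to show that the scalar quantity $M_t := \E[\frac1J\sum_{j=1}^J|\mathfrak e_t^{(j)}|^2]$ satisfies the Riccati differential inequality $\frac{\d}{\d t}M_t \le -\frac{J+1}{J^2}M_t^2$ with $M_0 = C_0$, and then to compare it against the explicit solution of the associated Riccati equation. Indeed, the right-hand side of \eref{eq:enscoll} is exactly $u(t) = (\frac{J+1}{J^2}t + \frac1{C_0})^{-1}$, the solution of $\frac{\d}{\d t}u = -\frac{J+1}{J^2}u^2$ with $u(0) = C_0$, so once the differential inequality is established the estimate follows by passing to $1/M_t$ and integrating.

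First I would return to the identity \eref{eq:IntSum_e} derived in the proof of Lemma \ref{Monotonie}. The decisive point, and the main obstacle, is the stochastic integral appearing there: by Lemma \ref{martingale} it is a genuine martingale, whereas the localisation/Fatou argument of Lemma \ref{Monotonie} only delivered the one-sided estimate \eref{Inequality1}. Taking expectations in \eref{eq:IntSum_e} and using Tonelli to exchange expectation and time integral then upgrades this to the identity
\begin{equation*}
M_{s+t} - M_s = -\frac{J+1}{J^3}\int_s^{s+t}\E\Big[\sum_{j,k=1}^J\langle\mathfrak e_r^{(j)},\mathfrak e_r^{(k)}\rangle^2\Big]\,\d r.
\end{equation*}
The second assertion of Lemma \ref{Monotonie} guarantees that the integrand is integrable in time, so $M$ is absolutely continuous, differentiable almost everywhere, with $\frac{\d}{\d t}M_t = -\frac{J+1}{J^3}\E[\sum_{j,k}\langle\mathfrak e_t^{(j)},\mathfrak e_t^{(k)}\rangle^2]$. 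This absolute continuity is precisely what the martingale property buys us and what will make the concluding integration rigorous.

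Next I would bound the integrand from below pointwise. Discarding the nonnegative off-diagonal terms and then applying Cauchy--Schwarz on the diagonal,
\begin{equation*}
\sum_{j,k=1}^J\langle\mathfrak e^{(j)},\mathfrak e^{(k)}\rangle^2 \ge \sum_{j=1}^J|\mathfrak e^{(j)}|^4 \ge \frac1J\Big(\sum_{j=1}^J|\mathfrak e^{(j)}|^2\Big)^2 = J\Big(\frac1J\sum_{j=1}^J|\mathfrak e^{(j)}|^2\Big)^2.
\end{equation*}
Combining this with Jensen's inequality $\E[(\frac1J\sum_j|\mathfrak e^{(j)}|^2)^2] \ge M_t^2$ yields the announced differential inequality $\frac{\d}{\d t}M_t \le -\frac{J+1}{J^2}M_t^2$ for almost every $t$.

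Finally, I would integrate. If $M_{t_0} = 0$ for some $t_0$, then $M_t = 0$ for all $t \ge t_0$ by the monotonicity of Lemma \ref{Monotonie}, and \eref{eq:enscoll} holds trivially there. On the set where $M_t > 0$ the function $f(t) := 1/M_t$ is absolutely continuous with $\frac{\d}{\d t}f_t = -M_t^{-2}\frac{\d}{\d t}M_t \ge \frac{J+1}{J^2}$ almost everywhere; integrating the identity $f(t) - f(0) = \int_0^t \frac{\d}{\d s}f_s\,\d s$, which is legitimate because $f$ is absolutely continuous, gives $1/M_t \ge \frac{J+1}{J^2}t + 1/C_0$, that is \eref{eq:enscoll}. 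The crux throughout is the martingale property of Lemma \ref{martingale}: without it one is left only with the integral inequality \eref{Inequality1}, whose reverse-Gronwall structure obstructs a direct comparison, whereas the absolute continuity it provides converts the pointwise Riccati inequality cleanly into the claimed bound.
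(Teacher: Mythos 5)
Your proposal is correct and follows essentially the same route as the paper's proof: both take expectations in \eref{eq:IntSum_e} using the martingale property of Lemma \ref{martingale}, bound the quadratic form from below by $J\bigl(\frac1J\sum_j|\mathfrak e^{(j)}|^2\bigr)^2$ (dropping off-diagonal terms plus Jensen-type inequalities), arrive at the Riccati inequality $h'\le-\frac{J+1}{J^2}h^2$, and conclude by scalar ODE comparison. Your additional care regarding absolute continuity of $M_t$ and the case $M_{t_0}=0$ makes the concluding comparison step slightly more explicit than the paper's, but it is the same argument in substance.
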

\begin{proof}
By  Lemma \ref{martingale} we can directly take expectations in \eref{eq:IntSum_e} to obtain 
\begin{equation*}
\E\left[\frac1J\sum\limits_{j=1}^J|\mathfrak e_t^{(j)}|^2\right] 
= \E\left[\frac1J\sum\limits_{j=1}^J|\mathfrak e_0^{(j)}|^2\right] - \frac{J+1}{J^3}\int_0^t\E\left[\sum\limits_{j,k=1}^J\langle \mathfrak{e}_s^{(j)},\mathfrak{e}_s^{(k)}\rangle^2\right]\,\d s.
\end{equation*}

Note  that {by dropping the non-negative mixed terms $j\neq k$ and by using Jensen's and Young's inequality} 
\begin{equation*}
\frac{J+1}{J^3}\E\left[\sum\limits_{j,k=1}^J\langle \mathfrak e_s^{(j)},\mathfrak{e}_s^{(k)}\rangle^2\right]
\ge \frac{J+1}{J^2}\E\left[\frac1J\sum\limits_{j=1}^J|\mathfrak e_s^{(j)}|^2\right]^2.
\end{equation*} 
Thus setting $t\mapsto h(t):=\E[\frac1
J\sum\limits_{j=1}^J|\mathfrak e_t^{(j)}|^2]$ 
we can write 
\begin{equation*}
h(t) = h(0) - \frac{J+1}{J^2}\int_0^th^2(s)\,\d s - \int_0^t p(s)ds
\end{equation*}
for a non-negative function $p\geq0$.
Hence, we can differentiate to obtain the differential inequality 
\[
h' \leq - \frac{J+1}{J^2} h^2\;,
\]
from which by a comparison argument for scalar ODE it follows that 
\begin{equation*}
h(t)=\E\left[\frac1J\sum\limits_{j=1}^J|\mathfrak e_t^{(j)}|^2\right]\le\frac1{\frac{J+1}{J^2}t+\frac1{h(0)}}\;.
\end{equation*} 
\end{proof}

\begin{corollary}
Under the same assumptions as in Theorem \ref{Thm_main2} and under Assumption \eref{A1} it holds true that
\begin{equation*}
\E\left[\frac1J\sum\limits_{j=1}^J|e_t^{(j)}|^2\right]\le\frac1{\sigma_{\min}}\frac{1}{\frac{J+1}{J^2}t+\frac1{C_0}},
\end{equation*}
where $\sigma_{\min}$ is the smallest eigenvalue of the positive definite operator $A^*\Gamma^{-1}A$.
\end{corollary}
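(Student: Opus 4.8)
The plan is to transfer the observation-space bound from Theorem~\ref{Thm_main2} back into the parameter space by exploiting the pointwise relation between $e^{(j)}$ and $\mathfrak e^{(j)}$. Recall that $\mathfrak e^{(j)}=\Gamma^{-\frac12}Ae^{(j)}$, so that
\[
|\mathfrak e^{(j)}|^2=\langle\Gamma^{-\frac12}Ae^{(j)},\Gamma^{-\frac12}Ae^{(j)}\rangle=\langle e^{(j)},A^*\Gamma^{-1}Ae^{(j)}\rangle.
\]
The crucial observation is that, after the finite-dimensional reduction, Assumption~\eref{A1} (injectivity of $\tilde A$) upgrades $A^*\Gamma^{-1}A$ from merely non-negative (Lemma~\ref{nonneg}) to strictly positive definite, so that all of its eigenvalues are positive and $\sigma_{\min}>0$.

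With positive definiteness established, I would use the Rayleigh-quotient lower bound $\langle v,A^*\Gamma^{-1}Av\rangle\ge\sigma_{\min}|v|^2$, valid for every $v$. Taking $v=e_t^{(j)}$ yields the pointwise estimate $|e_t^{(j)}|^2\le\sigma_{\min}^{-1}|\mathfrak e_t^{(j)}|^2$. Averaging over the ensemble index $j$ and taking expectations---both operations preserving the inequality---gives
\[
\E\Big[\frac1J\sum_{j=1}^J|e_t^{(j)}|^2\Big]\le\frac1{\sigma_{\min}}\,\E\Big[\frac1J\sum_{j=1}^J|\mathfrak e_t^{(j)}|^2\Big].
\]
Finally I would bound the right-hand side using Theorem~\ref{Thm_main2}, producing $\sigma_{\min}^{-1}\big(\tfrac{J+1}{J^2}t+C_0^{-1}\big)^{-1}$, which is precisely the claimed inequality.

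The proof is short and the only step demanding genuine care is the passage from Assumption~\eref{A1} to strict positivity of $\sigma_{\min}$. Here one must check that injectivity of $\tilde A$ forbids a nontrivial kernel of $A^*\Gamma^{-1}A$: any $v$ with $A^*\Gamma^{-1}Av=0$ satisfies $0=\langle v,A^*\Gamma^{-1}Av\rangle=|\Gamma^{-\frac12}Av|^2$, hence $\Gamma^{-\frac12}Av=0$ and therefore $v=0$ by injectivity. This guarantees $\sigma_{\min}>0$ and makes the division legitimate; everything else is a one-line consequence of the already-proven theorem.
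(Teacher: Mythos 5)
Your proof is correct and follows essentially the same route as the paper: the pointwise Rayleigh-quotient bound $|\mathfrak e_t^{(j)}|^2=\langle e_t^{(j)},A^*\Gamma^{-1}Ae_t^{(j)}\rangle\ge\sigma_{\min}|e_t^{(j)}|^2$, averaged and combined with Theorem \ref{Thm_main2}. Your extra verification that Assumption \eref{A1} forces $\sigma_{\min}>0$ (via the kernel argument) is a welcome piece of care that the paper leaves implicit, but it does not change the argument.
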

\begin{proof}
The assertion follows directly from the inequality
\begin{equation*}
|\mathfrak e^{(j)}|^2 = |\Gamma^{-\frac12}Ae^{(j)}|^2 = \langle e^{(j)},A^*\Gamma^{-1}Ae^{(j)}\rangle\ge \sigma_{\min}|e^{(j)}|^2,
\end{equation*}
since $A^*\Gamma^{-1}A$ is positive definite.
\end{proof}
\begin{remark}
Note that the bound in \eref{eq:enscoll} deteriorates with growing number of particles $J$, i.e.\ the result does not quantify the ensemble collapse in the large ensemble size limit. However, the presented analysis is tailored for fixed ensemble size and we will demonstrate in the numerical experiments that the derived bound \eref{eq:enscoll} can be efficiently used to quantify the collapse in this setting.
\end{remark}

\subsection{Higher-order ensemble collapse}

Here we state the result for higher moments and postpone the proof to the appendix, as they are very similar to but technically more involved than the case $p=2$. 
\begin{theorem}\label{Thm_main3}
Let $p\in(2,\frac{J+3}2)$ and let 
$
u_0=(u_0^{(j)})_{j\in\{1,\dots,J\}}
$
be $\cF_0$-measurable maps $u_0^{(j)}:\Omega\to\cX$ such that 
$
\E[\frac1J\sum\limits_{j=1}^J|\mathfrak e_0^{(j)}|^p]<\infty. 
$
Then it holds true that
\begin{equation*}
\E\left[\frac1J\sum\limits_{j=1}^J|\mathfrak e_t^{(j)}|^p\right]\le \frac{J^{\frac p2}}{\left(\frac2pC(p,J)K^{-\frac2p}J^{1-\frac2p}t+\left(K^{\frac{p-1}2}\E\left[\frac1J\sum\limits_{j=1}^J|\mathfrak e_0^{(j)}|^p\right]\right)^{-\frac2p}\right)^{\frac p2}}
\end{equation*}
with $C(p,J):=\frac{p}{J^2} (1 - \frac{(p-2+J)\cdot(J-1)}{2J^2}-\frac{p-2}{2J^2})$.
\end{theorem}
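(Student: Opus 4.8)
The plan is to follow the same three-step scheme that established Theorem~\ref{Thm_main2} in the case $p=2$ — an It\^o expansion of the $p$-th power, a localisation/martingale argument justifying the passage to expectations, and a scalar differential inequality closed by comparison — but now applied to $|\mathfrak e^{(j)}|^p=(|\mathfrak e^{(j)}|^2)^{p/2}$. Writing $f(x)=|x|^p$ on $\R^K$, we have $\nabla f(x)=p|x|^{p-2}x$ and $\mathrm{Hess}\,f(x)=p|x|^{p-2}I+p(p-2)|x|^{p-4}x x^\top$; in contrast to the quadratic case the Hessian is no longer constant, and its rank-one part $x x^\top$ will contribute to the It\^o correction, which is the source of most of the additional bookkeeping.

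First I would apply It\^o's formula to $f(\mathfrak e_t^{(j)})$ using the closed SDE $\d\mathfrak e^{(j)}=-C(\mathfrak e)\mathfrak e^{(j)}\,\d t+C(\mathfrak e)\,\d(W^{(j)}-\overline W)$ derived in the proof of Lemma~\ref{Monotonie}. The gradient paired with the drift yields the dissipative term $-p|\mathfrak e^{(j)}|^{p-2}\langle\mathfrak e^{(j)},C(\mathfrak e)\mathfrak e^{(j)}\rangle=-\frac pJ|\mathfrak e^{(j)}|^{p-2}\sum_k\langle\mathfrak e^{(j)},\mathfrak e^{(k)}\rangle^2$, while the It\^o correction $\tfrac12\tfrac{J-1}{J}\trace(C(\mathfrak e)\,\mathrm{Hess}\,f(\mathfrak e^{(j)})\,C(\mathfrak e))$, where the factor $\tfrac{J-1}{J}$ is the covariance of $W^{(j)}-\overline W$ supplied by Lemma~\ref{BM_innerproduct}, produces positive lower-order terms of the same homogeneity. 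Summing over $j$ and collecting, the entire drift should be expressible as $-C(p,J)\,Q(\mathfrak e)\,\d t$ plus a stochastic integral, with $Q(\mathfrak e)\ge0$ and $C(p,J)$ exactly the stated constant. The algebra that turns the sum of the gradient term and the two Hessian contributions into the single coefficient $C(p,J)$ is where the condition $p<J+3$ enters: it is precisely the requirement $C(p,J)>0$, i.e.\ that dissipation dominates the positive It\^o correction.

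The \emph{main obstacle}, exactly as for $p=2$, is showing that the stochastic integral has vanishing expectation, i.e.\ that it is a genuine martingale rather than merely a local one. Its quadratic-variation rate is governed by $|\nabla f(\mathfrak e^{(j)})^\top C(\mathfrak e)|^2=p^2|\mathfrak e^{(j)}|^{2(p-1)}|\mathfrak e^{(j)\top}C(\mathfrak e)|^2$, which is of homogeneity $2p+2$ in $\mathfrak e$ since $\|C(\mathfrak e)\|$ scales like $|\mathfrak e|^2$. Finiteness of its integrated second moment follows from the a priori bound $\int_0^t\E[\frac1J\sum_j|\mathfrak e_s^{(j)}|^{q+2}]\,\d s<\infty$ of Lemma~\ref{Monotonie}, applied with $q=2p$; this is admissible only when $2p<J+3$, i.e.\ $p<\frac{J+3}2$, which is exactly the hypothesis and explains why the admissible range here is half of that in Lemma~\ref{Monotonie}. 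Granting this, and arguing as in Lemma~\ref{martingale}, I can take expectations to obtain the exact identity $h(t)=h(0)-C(p,J)\int_0^t\E[Q(\mathfrak e_s)]\,\d s$ with $h(t):=\E[\frac1J\sum_j|\mathfrak e_t^{(j)}|^p]$.

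Finally I would close the estimate. Discarding the nonnegative off-diagonal contributions in $Q$ bounds $\E[Q(\mathfrak e_s)]$ from below by a multiple of $\E[\frac1J\sum_j|\mathfrak e_s^{(j)}|^{p+2}]$; the power-mean inequality over the ensemble average followed by Jensen's inequality for the expectation yields $\E[\frac1J\sum_j|\mathfrak e^{(j)}|^{p+2}]\ge h^{1+2/p}$, while passing between the Euclidean and $p$-type norms on $\R^K$ (and tracking the $J$-factors from the averaging) contributes the explicit powers $K^{-2/p}$, $K^{(p-1)/2}$, $J^{1-2/p}$ and $J^{-2/p}$. The outcome is a differential inequality of the form $h'\le-c\,h^{1+2/p}$ with $c$ proportional to $C(p,J)K^{-2/p}J^{-2/p}$; the comparison principle for scalar ODEs, exactly as at the end of the proof of Theorem~\ref{Thm_main2}, then integrates this to the claimed bound, with the initial datum $h(0)$ entering through the $b$-term and the prefactor $J^{p/2}$ emerging from the normalisation in the power-mean step. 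I expect no conceptual novelty beyond the $p=2$ proof; the difficulty is purely the nonconstant Hessian and the sharper moment budget, which is why the details are deferred to the appendix.
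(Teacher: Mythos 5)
Your high-level scheme (It\^o expansion, localisation plus a moment budget to upgrade the local martingale to a true one, then a scalar ODE comparison) is the paper's scheme, and your accounting of the two thresholds is exactly right: the drift sign costs $p<J+3$ (equivalently $C(p,J)>0$), while the quadratic variation of the stochastic integral has homogeneity $2p+2$ and is controlled by the a priori integrability at level $2p$, which halves the range to $p<\frac{J+3}{2}$ (this is precisely Lemma \ref{martingale2} via the bound \eref{bound_p+2}). The genuine gap is in the central It\^o step. You apply It\^o to the particle-wise functional $\frac1J\sum_j|\mathfrak e^{(j)}|^p$ and assert that the drift ``should be expressible as $-C(p,J)Q(\mathfrak e)$'' with $Q\ge0$ and \emph{exactly} the stated constant. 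The paper never works with this functional: it uses the coordinate-wise Lyapunov function $V_p(\mathfrak e)=\frac1J\sum_{m=1}^K\bigl(\sum_{j=1}^J|\mathfrak e_m^{(j)}|^2\bigr)^{p/2}$, for which the computation does collapse, coordinate by coordinate, to the single constant $C(p,J)$ times the nonnegative quantity $\sum_m\bigl(\sum_k|\mathfrak e_m^{(k)}|^2\bigr)^{\frac p2-1}\sum_n\bigl(\sum_l\mathfrak e_m^{(l)}\mathfrak e_n^{(l)}\bigr)^2$ in \eref{bound_p+1}. For your functional the drift reads
\begin{equation*}
\fl \sum_{j=1}^J\Bigl[-p|\mathfrak e^{(j)}|^{p-2}\langle\mathfrak e^{(j)},C(\mathfrak e)\mathfrak e^{(j)}\rangle+\frac{(J-1)p}{2J}\Bigl(|\mathfrak e^{(j)}|^{p-2}\trace\bigl(C(\mathfrak e)^2\bigr)+(p-2)|\mathfrak e^{(j)}|^{p-4}\langle\mathfrak e^{(j)},C(\mathfrak e)^2\mathfrak e^{(j)}\rangle\Bigr)\Bigr],
\end{equation*}
and these three terms have genuinely different structures: the global factor $\trace(C^2)$ is \emph{not} dominated particle-by-particle by the local dissipation (a particle with $|\mathfrak e^{(j)}|=\varepsilon$ contributes dissipation of order $\varepsilon^{p}$ but correction of order $\varepsilon^{p-2}\trace(C^2)$), so negativity can at best hold after summing over $j$, and establishing that summed inequality with a positive constant on the full range $p<J+3$ is the hard step you have waved away as ``algebra'' — and it is certainly not an identity with the stated $C(p,J)$, which is tied to the coordinate-wise functional.

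A second symptom of the mismatch is the constants. If your route closed, the power-mean/Jensen step applied directly to $h(t)=\E[\frac1J\sum_j|\mathfrak e_t^{(j)}|^p]$ gives $h'\le-c\,h^{1+2/p}$ with no $K$-dependence and initial datum $h(0)$ itself, hence a bound of the form $(ct+h(0)^{-2/p})^{-p/2}$ without the factors $J^{p/2}$ and $K^{(p-1)/2}$; nowhere in your argument does a passage ``between Euclidean and $p$-type norms'' actually occur, so your explanation of those factors is reverse-engineered. In the paper they are the fingerprint of the functional choice: the ODE comparison is run for $V_p$ (with Jensen in the form $V_{p+2}\ge K^{-2/p}J^{-2/p}V_p^{(p+2)/p}$), and only at the very end Lemma \ref{lem:pTransformation} converts $V_p$ back into $\frac1J\sum_j|\mathfrak e^{(j)}|^p$, producing exactly $J^{p/2}$ and $K^{(p-1)/2}$. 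To repair the proposal, either switch to $V_p$ and redo the It\^o algebra there (the paper's route), or prove the summed drift inequality for the particle-wise functional from scratch — which, if true, would yield a formally different (and cleaner) bound, but is precisely the missing content. One further shared subtlety worth noting: invoking Lemma \ref{Monotonie} (or \eref{bound_p+2}) at level $2p$ to get the martingale property presupposes finite initial moments of order $2p$, strictly more than the theorem's hypothesis; your write-up inherits this from the paper rather than resolving it.
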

\begin{proof}
The proof based on It\^o's formula and a comparison principle for ODEs is very similar to the case $p=2$. 
Details can be found in the appendix. 

\end{proof}
{
\begin{remark}
Note that a larger ensemble seems to \textit{regularize} the dynamics.
The higher the ensemble number $J$, the larger is the highest moment of ensemble collapse we can bound. 

The restriction $2p<J+3$ comes from the fact that we need the martingale property of the stochastic integral, which we obtain from the bounds in Lemma \ref{Monotonie}.
\end{remark}}
\begin{corollary}\label{cor1}
Under the same assumptions as in Theorem \ref{Thm_main3} and under Assumption \eref{A1} it holds true that
\begin{equation*}
\fl\E\left[\frac1J\sum\limits_{j=1}^J |e_t^{(j)}|^p\right]\le\frac{J^{\frac p2}}{\left(\sigma_{\min}\cdot\frac2pC(p,J)K^{-\frac2p}J^{1-\frac2p}t+\left(K^{\frac{p-1}2}\E\left[\frac1J\sum\limits_{j=1}^J|\mathfrak e_0^{(j)}|^p\right]\right)^{-\frac2p}\right)^{\frac p2}},
\end{equation*}
where $\sigma_{\min}$ is the smallest eigenvalue of the positive definite operator $A^*\Gamma^{-1}A$ and $C(p,J)$ is defined in Theorem \ref{Thm_main3}.
\end{corollary}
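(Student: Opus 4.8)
The plan is to deduce the claim directly from Theorem~\ref{Thm_main3} by transporting the $p$-th moment bound from observation space into parameter space, mirroring the argument used for the quadratic corollary following Theorem~\ref{Thm_main2}. Write $f(t):=\E\left[\frac1J\sum_{j=1}^J|e_t^{(j)}|^p\right]$ and $g(t):=\E\left[\frac1J\sum_{j=1}^J|\mathfrak e_t^{(j)}|^p\right]$. The content of the corollary is then a pointwise comparison of the form $f(t)\le \sigma_{\min}^{-p/2}g(t)$, fed into the estimate already established for $g(t)$ in Theorem~\ref{Thm_main3}.

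First I would record the spectral inequality. Under Assumption~\eref{A1} the (reduced) operator $\tilde A$ is one-to-one, so $A^*\Gamma^{-1}A$ is symmetric and \emph{positive definite}, with smallest eigenvalue $\sigma_{\min}>0$. Hence, for each particle,
\begin{equation*}
|\mathfrak e^{(j)}|^2=|\Gamma^{-\frac12}Ae^{(j)}|^2=\langle e^{(j)},A^*\Gamma^{-1}Ae^{(j)}\rangle\ge \sigma_{\min}|e^{(j)}|^2 .
\end{equation*}
Since $s\mapsto s^{p/2}$ is monotone increasing on $[0,\infty)$ for $p>2$, raising this to the power $p/2$ gives the pointwise bound $|\mathfrak e^{(j)}|^p\ge \sigma_{\min}^{p/2}|e^{(j)}|^p$. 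Averaging over the $J$ particles and taking expectations yields $f(t)\le \sigma_{\min}^{-p/2}g(t)$ for every $t\ge0$.

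It then remains to substitute the bound of Theorem~\ref{Thm_main3} for $g(t)$ and to absorb the scalar $\sigma_{\min}^{-p/2}$ into the denominator. Writing the right-hand side of Theorem~\ref{Thm_main3} as $J^{p/2}/D(t)^{p/2}$, one has $\sigma_{\min}^{-p/2}\,J^{p/2}/D(t)^{p/2}=J^{p/2}/(\sigma_{\min}D(t))^{p/2}$, so that the factor $\sigma_{\min}$ enters through the term $\frac2pC(p,J)K^{-2/p}J^{1-2/p}t$; this scaling of the collapse rate by $\sigma_{\min}$ is the stated estimate. The manipulation is entirely parallel to the quadratic case, where the same eigenvalue inequality produced the extra $\sigma_{\min}^{-1}$ in the corollary to Theorem~\ref{Thm_main2}.

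I do not expect a genuine obstacle: this is essentially a one-line corollary of Theorem~\ref{Thm_main3}, and the only points requiring care are (i) that the fractional power $p/2$ preserves the spectral inequality, which is clear by monotonicity, and (ii) the bookkeeping of exactly where the factor $\sigma_{\min}^{-p/2}$ is placed when the fraction is rewritten. The truly essential ingredient is Assumption~\eref{A1}: without injectivity of $\tilde A$ one has $\sigma_{\min}=0$, the operator $A^*\Gamma^{-1}A$ degenerates, and no parameter-space collapse can be inferred from the observation-space bound — consistent with the earlier remark that convergence in parameter space cannot be expected when the forward operator is blind to some directions of the parameter.
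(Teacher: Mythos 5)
Your route is exactly the intended one: the corollary is meant to follow from Theorem \ref{Thm_main3} in one line via the spectral inequality $|\mathfrak e^{(j)}|^2=\langle e^{(j)},A^*\Gamma^{-1}Ae^{(j)}\rangle\ge\sigma_{\min}|e^{(j)}|^2$, exactly as in the quadratic corollary following Theorem \ref{Thm_main2} (the paper prints no separate proof for the general-$p$ case). Your steps up to $f(t)\le\sigma_{\min}^{-p/2}\,g(t)$ — injectivity of $\tilde A$ under Assumption \eref{A1}, monotonicity of $s\mapsto s^{p/2}$, averaging and taking expectations — are all correct and match the paper's argument.

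The problem sits precisely in the bookkeeping step you flagged as point (ii), and it is not innocuous. Writing the bound of Theorem \ref{Thm_main3} as $J^{p/2}D(t)^{-p/2}$ with $D(t)=at+b$, where $a=\frac2pC(p,J)K^{-\frac2p}J^{1-\frac2p}$ and $b=\bigl(K^{\frac{p-1}2}\E\bigl[\frac1J\sum_{j=1}^J|\mathfrak e_0^{(j)}|^p\bigr]\bigr)^{-\frac2p}$, your absorption gives
\begin{equation*}
\sigma_{\min}^{-\frac p2}\,\frac{J^{\frac p2}}{D(t)^{\frac p2}}
=\frac{J^{\frac p2}}{\bigl(\sigma_{\min}\,a\,t+\sigma_{\min}\,b\bigr)^{\frac p2}},
\end{equation*}
i.e.\ $\sigma_{\min}$ multiplies \emph{both} summands, whereas the corollary as printed carries $\sigma_{\min}$ only on the $t$-term. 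These agree only if $\sigma_{\min}=1$; whenever $\sigma_{\min}<1$ (which is not excluded) one has $\sigma_{\min}b<b$, so your bound is strictly weaker than the printed display and your closing claim ``this is the stated estimate'' is false. Indeed the printed display cannot follow from this argument at all: at $t=0$ it would assert $\E\bigl[\frac1J\sum_{j}|e_0^{(j)}|^p\bigr]\le J^{\frac p2}K^{\frac{p-1}2}\E\bigl[\frac1J\sum_{j}|\mathfrak e_0^{(j)}|^p\bigr]$ with no $\sigma_{\min}$, which fails when the initial spread is aligned with an eigendirection of $A^*\Gamma^{-1}A$ of sufficiently small eigenvalue. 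The version your argument actually establishes — $\sigma_{\min}^{-p/2}$ as a global prefactor, equivalently $\sigma_{\min}$ in front of the whole denominator content — is the consistent one, in exact parallel with the $p=2$ corollary, where $\sigma_{\min}^{-1}$ is deliberately kept outside the fraction. So: state and prove the prefactor version rather than asserting equality with the printed display.
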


}

\subsection{Almost sure ensemble collapse}
{We have proven conditions for ensemble collapse in $p$-th moments, but a stronger measure of stochastic convergence is almost sure convergence. This is the focus of this section.}
\begin{theorem}\label{Thm_main4}
{Let $u_0=(u_0^{(j)})_{j\in\{1,\dots,J\}}$ be $\cF_0$-measurable maps $u_0^{(j)}:\Omega\to\cX$} and $\gamma:\R_+\to\R_+$ a positive, monotonically increasing and differentiable function such that $\int_0^{\infty}\frac{\gamma'(s)^2}{\gamma(s)}\,\d s<\infty$.
Then the trivial solution of 
\begin{equation}\label{SDE_mu}
\d \mathfrak e_t^{(j)} = -C(\mathfrak e_t)\mathfrak e_t^{(j)}\d t + C(\mathfrak e_t)\d(W_t^{(j)}-\overline W_t)
\end{equation}
is almost surely asymptotically stable with rate function $\rho(t)=(\gamma(t))^{-\frac12}$. In particular, $(\mathfrak e_t^{(j)})_{j=1,\dots,J}$ converges to zero almost surely as $t\to\infty$.
\end{theorem}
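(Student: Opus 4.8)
The plan is to reuse the scalar It\^o computation from the proof of Lemma \ref{Monotonie} and to study the \emph{time-weighted} process $Y_t := \gamma(t)\,V_t$, where $V_t := \frac1J\sum_{j=1}^J|\mathfrak e_t^{(j)}|^2$. From \eref{eq:IntSum_e} the averaged squared norm obeys
\begin{equation*}
\d V_t = -\frac{J+1}{J^3}\sum_{j,k=1}^J\langle\mathfrak e_t^{(j)},\mathfrak e_t^{(k)}\rangle^2\,\d t + \d M_t,
\end{equation*}
where $M_t=\frac2J\sum_{j=1}^J\int_0^t\mathfrak e_s^{(j)\top}C(\mathfrak e_s)\,\d W_s^{(j)}$ is a continuous local martingale. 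Since $\gamma$ is deterministic and differentiable, the product rule would give
\begin{equation*}
\d Y_t = \Bigl(\gamma'(t)\,V_t - \gamma(t)\,\frac{J+1}{J^3}\sum_{j,k=1}^J\langle\mathfrak e_t^{(j)},\mathfrak e_t^{(k)}\rangle^2\Bigr)\,\d t + \gamma(t)\,\d M_t.
\end{equation*}

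Next I would control the drift, which is now sign-indefinite because the increasing weight injects the positive term $\gamma'(t)\,V_t$. Dropping the mixed terms and using Jensen's inequality exactly as in the proof of Theorem \ref{Thm_main2} yields $\frac{J+1}{J^3}\sum_{j,k}\langle\mathfrak e^{(j)},\mathfrak e^{(k)}\rangle^2\ge a\,V_t^2$ with $a:=\frac{J+1}{J^2}$. Completing the square in $V_t$,
\begin{equation*}
\gamma'(t)\,V_t - a\,\gamma(t)\,V_t^2 \le \sup_{v\ge0}\bigl(\gamma'(t)\,v - a\,\gamma(t)\,v^2\bigr) = \frac{\gamma'(t)^2}{4a\,\gamma(t)}.
\end{equation*}
This is the heart of the argument and the step I expect to be the main obstacle: the whole proof hinges on absorbing the destabilising drift $\gamma'(t)V_t$ into the dissipative quadratic term and recognising that the leftover is \emph{exactly} the integrand appearing in the hypothesis. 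By assumption $\int_0^\infty\frac{\gamma'(s)^2}{\gamma(s)}\,\d s<\infty$, so $A_t:=\frac1{4a}\int_0^t\frac{\gamma'(s)^2}{\gamma(s)}\,\d s$ increases to a finite limit $A_\infty$.

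Finally I would invoke supermartingale convergence. Combining the two displays, $Y_t - A_t$ is the sum of the continuous local martingale $\int_0^t\gamma(s)\,\d M_s$ and a nonincreasing process, hence a continuous local supermartingale, and it is bounded below by $-A_\infty$. A local supermartingale bounded below is a genuine supermartingale (Fatou along a localizing sequence, with the localization already set up in Lemma \ref{Monotonie} and Lemma \ref{martingale}), and a supermartingale bounded below converges almost surely to a finite limit. Since $A_t\to A_\infty$, it follows that $Y_t=\gamma(t)\,V_t$ converges almost surely to a finite random variable $Y_\infty\ge0$. In particular $\limsup_{t\to\infty}\gamma(t)\,\frac1J\sum_{j=1}^J|\mathfrak e_t^{(j)}|^2<\infty$ almost surely, so $\frac1J\sum_{j}|\mathfrak e_t^{(j)}|^2$ is almost surely $O(\rho(t)^2)$ with $\rho(t)=\gamma(t)^{-1/2}$; this is the claimed almost sure asymptotic stability with the stated rate. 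Since $\rho(t)\to0$ as $t\to\infty$, each $\mathfrak e_t^{(j)}$ then converges to zero almost surely.
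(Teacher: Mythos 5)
Your proposal is, in substance, the paper's own argument: the paper works with exactly your weighted Lyapunov function $V(\mathfrak e,t)=\gamma(t)\frac1J\sum_{j=1}^J|\mathfrak e^{(j)}|^2$, computes the generator $LV=\frac{\gamma'(t)}J\sum_{j}|\mathfrak e^{(j)}|^2-\gamma(t)\frac{J+1}{J^3}\sum_{j,k}\langle\mathfrak e^{(j)},\mathfrak e^{(k)}\rangle^2$, and absorbs the destabilising linear term into the quadratic dissipation by maximizing the drift --- precisely the completing-the-square step you single out, with the identical constant $\eta(t)=\frac14\frac{J^2}{J+1}\frac{\gamma'(t)^2}{\gamma(t)}$. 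The one genuine difference is the last step: the paper concludes by citing Theorem 4.6.2 of \cite{Mao2008} as a black box (integrability of $\eta$ yields almost sure asymptotic stability with rate $\gamma^{-1/2}$), whereas you re-derive that theorem's mechanism inline via supermartingale convergence; your version is self-contained, which is a real merit. One technical caveat: your upgrade ``a local supermartingale bounded below is a genuine supermartingale by Fatou'' requires $\E[Y_0]<\infty$, but Theorem \ref{Thm_main4} imposes no moment condition on $u_0$ --- and the localizations in Lemmas \ref{Monotonie} and \ref{martingale} do rest on such moment hypotheses, so they are not ``already set up'' for you here. The fix is standard and costless: either invoke the nonnegative semimartingale convergence theorem of Liptser--Shiryaev for $X_t=\zeta+A^1_t-A^2_t+M_t$ with $\zeta$ merely finite a.s.\ (this is what underlies Mao's Theorem 4.6.2), or run your argument on the $\cF_0$-measurable events $\{V_0\le n\}$ and take the union over $n$. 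With that emendation your proof is complete and gives exactly the paper's conclusion: $\gamma(t)V_t$ converges a.s.\ to a finite limit, hence $|\mathfrak e_t^{(j)}|=O(\gamma(t)^{-1/2})$ almost surely.
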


For examples of $\gamma$ see the remark below.
{
\begin{proof}
The idea of this proof is based on Theorem 4.6.2 in \cite{Mao2008}. We define the stochastic Lyapunov function
\begin{equation*}
V(\mathfrak e, t)=\gamma(t)\frac1J\sum\limits_{j=1}^J|\mathfrak e^{(j)}|^2.
\end{equation*}
The generator applied to $V$ fulfills
\begin{eqnarray*}
LV(\mathfrak e, t) 	&= \frac{\gamma'(t)}J\sum\limits_{j=1}^J|\mathfrak e^{(j)}|^2-\gamma(t)\frac{J+1}{J^3}\sum\limits_{j, k=1}^J\langle\mathfrak e^{(k)},\mathfrak e^{(j)}\rangle^2\\
		&\le\frac{\gamma'(t)}J\sum\limits_{j=1}^J|\mathfrak e^{(j)}|^2-\gamma(t)\frac{J+1}{J^3}\sum\limits_{j=1}^J|\mathfrak e^{(j)}|^4.
\end{eqnarray*}
We can maximize this w.r.t. $(|\mathfrak e^{(1)}|^2,\dots,|\mathfrak e^{(J)}|^2)$ and get the following bound for $LV$.
\begin{equation*}
LV(\mathfrak e, t)\le \frac{\gamma'(t)^2}{\gamma(t)}\frac14\frac{J^2}{J+1}=:\eta(t)
\end{equation*}
Since  $\int_0^\infty\eta(t)\,\d t<\infty$, with Theorem 4.6.2 of \cite{Mao2008} the trivial solution of (\ref{SDE_mu}) is almost surely asymptotically stable with rate function $\rho(t)=(\gamma(t))^{-\frac12}$.
\end{proof}

\begin{corollary}
\label{cor2}
Under the same assumptions as in Theorem \ref{Thm_main4} and assumption \eref{A1} it holds true that $(e_t^{(j)})_{j=1,\dots,J}$ converges to zero almost surely as $t\to\infty$ with rate function $\rho(t)=(\gamma(t))^{-\frac12}$.
\end{corollary}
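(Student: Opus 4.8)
The plan is to transfer the almost sure convergence, together with its rate, of the mapped quantities $\mathfrak e_t^{(j)}=\Gamma^{-\frac12}Ae_t^{(j)}$ established in Theorem \ref{Thm_main4} back to the parameter-space quantities $e_t^{(j)}$, using assumption \eref{A1} to invert the action of $A$ in a quantitative, $\omega$-wise manner. The entire content will be a deterministic domination of $|e_t^{(j)}|$ by a fixed multiple of $|\mathfrak e_t^{(j)}|$.

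First I would recall that by the subspace property of Lemma \ref{lin_span} all particles, and hence all differences $e_t^{(j)}=u_t^{(j)}-\overline u_t$, remain in the span $\cS$ of the initial ensemble for all $t$ almost surely. After the reduction to the finite-dimensional setting, assumption \eref{A1} says that $\tilde A$, i.e.\ $A$ restricted to $\cS$, is one-to-one, so that $A^*\Gamma^{-1}A$ is positive definite on $\cS$. Denoting by $\sigma_{\min}>0$ its smallest eigenvalue, I obtain exactly the inequality already used in the corollary following Theorem \ref{Thm_main2}:
\[
|\mathfrak e_t^{(j)}|^2=|\Gamma^{-\frac12}Ae_t^{(j)}|^2=\langle e_t^{(j)},A^*\Gamma^{-1}Ae_t^{(j)}\rangle\ge\sigma_{\min}|e_t^{(j)}|^2,
\]
and therefore $|e_t^{(j)}|\le\sigma_{\min}^{-\frac12}|\mathfrak e_t^{(j)}|$ pointwise in $\omega$ and $t$, for every $j\in\{1,\dots,J\}$.

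The almost sure convergence $e_t^{(j)}\to0$ then follows immediately, since the right-hand side tends to zero almost surely by Theorem \ref{Thm_main4}. To see that the rate function $\rho(t)=(\gamma(t))^{-\frac12}$ is preserved, I would observe that the bound involves only the fixed multiplicative constant $\sigma_{\min}^{-\frac12}$; taking logarithms this contributes the additive constant $-\tfrac12\log\sigma_{\min}$, and upon dividing by $\log\rho(t)\to-\infty$ (note $\rho(t)\to0$ since $\gamma$ is increasing to infinity) this constant is asymptotically negligible. Hence the almost sure decay rate of $|e_t^{(j)}|$ coincides with that of $|\mathfrak e_t^{(j)}|$ furnished by Theorem \ref{Thm_main4}.

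I do not expect any genuine obstacle here beyond the bookkeeping. The only mild care needed is to confirm that the notion of ``almost sure asymptotic stability with rate function $\rho$'' used in Theorem \ref{Thm_main4}, in the sense of Theorem 4.6.2 of \cite{Mao2008}, is stable under multiplication by a positive constant; this is clear from its logarithmic formulation. The essential ingredients are thus that \eref{A1} makes $A^*\Gamma^{-1}A$ positive definite on $\cS$ and that the ensemble never leaves $\cS$, which together turn the observation-space statement of Theorem \ref{Thm_main4} into the parameter-space statement claimed in the corollary.
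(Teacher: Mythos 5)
Your argument is correct and is essentially the paper's own: Corollary \ref{cor2} is left without a written proof precisely because it follows, exactly as you say, from the pointwise bound $|\mathfrak e_t^{(j)}|^2=\langle e_t^{(j)},A^*\Gamma^{-1}Ae_t^{(j)}\rangle\ge\sigma_{\min}|e_t^{(j)}|^2$ already invoked in the corollary after Theorem \ref{Thm_main2}, which transfers the almost sure convergence of Theorem \ref{Thm_main4} to $e_t^{(j)}$ with the same rate function $\rho(t)=(\gamma(t))^{-\frac12}$ since the multiplicative constant $\sigma_{\min}^{-\frac12}$ is asymptotically negligible. Your supporting observations (subspace property ensuring positive definiteness on $\cS$ under \eref{A1}, and invariance of the rate under constant factors) are exactly the right bookkeeping.
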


\begin{remark}
Let us give two examples of admissible $\gamma(t)$:
\begin{itemize}
\item
 $\gamma(t)=(t+\varepsilon)^\alpha$ for $\alpha\in(0,1)$ and $\varepsilon>0$ sufficiently small to obtain the rate function $\rho(t) = \frac1{(t+\varepsilon)^{\frac{\alpha}2}}$.
\item
 $\gamma(t)=(t+\varepsilon) \log(t+\varepsilon)^{-\alpha}$ for arbitrarily small $\alpha >\frac12$ and $\varepsilon>0$ to obtain the rate function $\rho(t)=\frac{\log(t+\varepsilon)^{\frac\alpha2}}{(t+\varepsilon)^{\frac12}}$
\end{itemize}
\end{remark}

}
\subsection{Ensemble Collapse in the parameter space}

The following result holds true without the strong assumption \eref{A1}. 
It only shows a monotone decrease, but not the collapse, where we need  \eref{A1}. See also Corollary \ref{cor1} and \ref{cor2}.
\begin{proposition}\label{Prop_Parameter}
Let $u_0=(u_0^{(j)})_{j\in\{1,\dots,J\}}$ be $\cF_0$-measurable maps $u_0^{(j)}:\Omega\to\cX$ such that 
$\E[\frac1J\sum\limits_{j=1}^J|e_0^{(j)}|^2]<\infty$. Then it holds true that
$t\mapsto \E[\frac1J\sum\limits_{j=1}^J|e_t^{(j)}|^2]^{\frac12}$ is monotonically decreasing for $t\geq0$.
\end{proposition}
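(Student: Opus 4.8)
The plan is to follow the strategy of the $p=2$ case of Lemma~\ref{Monotonie}, but applied directly to the untransformed centered ensemble $e^{(j)}$ rather than to $\mathfrak e^{(j)}$. Since $x\mapsto\sqrt x$ is increasing, it suffices to prove that $t\mapsto\E[\frac1J\sum_{j=1}^J|e_t^{(j)}|^2]$ is monotonically decreasing. First I would apply It\^o's formula to $|e_t^{(j)}|^2$ using the evolution equation \eref{eq:sde_e}. Rewriting $C(e_t)A^*\Gamma^{-1}Ae_t^{(j)}=\frac1J\sum_k e_t^{(k)}\langle\mathfrak e_t^{(k)},\mathfrak e_t^{(j)}\rangle$, the drift of $2\langle e_t^{(j)},\d e_t^{(j)}\rangle$ produces $-\frac2J\sum_k\langle e_t^{(j)},e_t^{(k)}\rangle\langle\mathfrak e_t^{(j)},\mathfrak e_t^{(k)}\rangle\,\d t$. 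For the It\^o correction I would invoke Lemma~\ref{BM_innerproduct} together with the fact that each coordinate of $W^{(j)}-\overline W$ has quadratic variation $\frac{J-1}J\,\d t$, which yields a correction term $\frac{J-1}{J^3}\sum_{k,l}\langle e_t^{(k)},e_t^{(l)}\rangle\langle\mathfrak e_t^{(k)},\mathfrak e_t^{(l)}\rangle\,\d t$ that is independent of $j$.

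Summing over $j$ and dividing by $J$, the two drift contributions combine (with coefficients $-\frac2{J^2}$ and $\frac{J-1}{J^3}$) into
\begin{equation*}
\d\Bigl(\frac1J\sum_{j=1}^J|e_t^{(j)}|^2\Bigr)=-\frac{J+1}{J^3}\sum_{j,k=1}^J\langle e_t^{(j)},e_t^{(k)}\rangle\langle\mathfrak e_t^{(j)},\mathfrak e_t^{(k)}\rangle\,\d t+\d M_t,
\end{equation*}
where $M_t$ is a local martingale. The heart of the argument, and the step that differs essentially from the observation-space computation, is to show that the drift integrand is non-negative. In Lemma~\ref{Monotonie} the analogous integrand was $\sum_{j,k}\langle\mathfrak e^{(j)},\mathfrak e^{(k)}\rangle^2$, a manifest sum of squares; here the summand $\langle e^{(j)},e^{(k)}\rangle\langle\mathfrak e^{(j)},\mathfrak e^{(k)}\rangle$ need not be non-negative term by term. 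To handle this I would write $\langle\mathfrak e^{(j)},\mathfrak e^{(k)}\rangle=\langle e^{(j)},A^*\Gamma^{-1}Ae^{(k)}\rangle$ and introduce the two $J\times J$ Gram-type matrices $M_{jk}=\langle e^{(j)},e^{(k)}\rangle$ and $N_{jk}=\langle e^{(j)},A^*\Gamma^{-1}Ae^{(k)}\rangle$. Both are symmetric positive semidefinite: $M$ as a Gram matrix, and $N$ because $A^*\Gamma^{-1}A$ is non-negative by Lemma~\ref{nonneg}, so that $\sum_{j,k}c_jc_kN_{jk}=\langle\sum_j c_je^{(j)},A^*\Gamma^{-1}A\sum_k c_ke^{(k)}\rangle\ge0$. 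Then, using that $N$ is symmetric, $\sum_{j,k}\langle e^{(j)},e^{(k)}\rangle\langle\mathfrak e^{(j)},\mathfrak e^{(k)}\rangle=\sum_{j,k}M_{jk}N_{jk}=\trace(MN)\ge0$, since the trace of a product of two positive semidefinite matrices is non-negative (e.g.\ $\trace(MN)=\trace(M^{1/2}NM^{1/2})\ge0$). Hence the drift is $\le0$.

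Finally, to pass from the pathwise identity to a statement about expectations I would reuse the localization argument of Lemma~\ref{Monotonie} verbatim: choose stopping times $\tau_n\to\infty$ making $M_{t\wedge\tau_n}$ a true martingale, take expectations over the interval $[s,s+t]$, and let $n\to\infty$, applying Fatou's lemma on the left-hand side and monotone convergence on the right (legitimate precisely because the drift integrand is non-negative). This gives $\E[\frac1J\sum_j|e_{s+t}^{(j)}|^2]\le\E[\frac1J\sum_j|e_s^{(j)}|^2]$ for all $s,t\ge0$, i.e.\ the claimed monotone decrease, and taking square roots concludes. I expect the verification of the drift's non-negativity through the trace-of-product identity to be the main conceptual obstacle; everything else is a routine transcription of the $p=2$ argument. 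This also clarifies why only monotonicity, and not collapse, can be obtained here without Assumption~\eref{A1}: if $A$ is degenerate the matrix $N$ may be singular and the drift can vanish while the ensemble spread $e^{(j)}$ is still nonzero, so no strictly negative upper bound on the derivative of $t\mapsto\E[\frac1J\sum_j|e_t^{(j)}|^2]$ is available.
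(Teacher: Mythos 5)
Your proposal is correct and follows essentially the same route as the paper: It\^o's formula applied to $|e_t^{(j)}|^2$, averaging over the ensemble to get the drift coefficient $-\frac{J+1}{J^3}$, localization by stopping times, and Fatou's lemma with monotone convergence to pass to expectations. The one step you flag as the ``main conceptual obstacle'' --- non-negativity of $\sum_{j,k}\langle e^{(j)},e^{(k)}\rangle\langle \mathfrak e^{(j)},\mathfrak e^{(k)}\rangle$, which you prove via $\trace(MN)\ge0$ for positive semidefinite $M,N$ --- is precisely the statement of the paper's Lemma~\ref{nonneg} (proved there by eigen-decomposition of $A^*\Gamma^{-1}A$), which the paper's proof simply cites at this point; note in passing that your citation of Lemma~\ref{nonneg} for the trivial fact that $A^*\Gamma^{-1}A$ is non-negative is a slight misattribution, though harmless.
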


{
\begin{proof}
It\^{o}'s formula leads to
\begin{eqnarray*}
\d|e_t^{(j)}|^2	&=2\langle e_t^{(j)},\d e_t^{(j)}\rangle+\langle \d e_t^{(j)},\d e_t^{(j)}\rangle\\
			&=-\frac2J\sum\limits_{k=1}^J\langle e_t^{(j)},e_t^{(k)}\rangle\langle \Gamma^{-\frac12}Ae_t^{(k)},\Gamma^{-\frac12}Ae_t^{(j)}\rangle\,\d t\\
			&\ \ \ \ +\frac2J\sum\limits_{k=1}^J\langle e_t^{(j)},e_t^{(k)}\rangle\langle \Gamma^{-\frac12}Ae_t^{(k)},\d(W_t^{(j)}-\overline W_t)\rangle\\
			&\ \ \ \ +\frac{1}{J^2}\sum\limits_{k,l=1}^J\frac{J-1}{J}\langle e_t^{(k)},e_t^{(l)}\rangle\langle \Gamma^{-\frac12}Ae_t^{(k)},\Gamma^{-\frac12}Ae_t^{(l)}\rangle\,\d t
\end{eqnarray*}
and taking the mean over all particles $j\in\{1,\dots,J\}$ gives
\begin{eqnarray*}
\d(\frac1J\sum\limits_{j=1}^J|e_t^{(j)}|^2)&=-\frac{J+1}{J^3}\sum\limits_{j,k=1}^J\langle e_t^{(k)},e_t^{(j)}\rangle\langle \Gamma^{-\frac12}Ae_t^{(k)},\Gamma^{-\frac12}Ae_t^{(j)}\rangle\,\d t\\
			&\ \ \ \ +\frac2{J^2}\sum\limits_{k,j=1}^J\langle e_t^{(k)},e_t^{(j)}\rangle\langle \Gamma^{-\frac12}Ae_t^{(k)},\d(W_t^{(j)}-\overline W_t)\rangle.
\end{eqnarray*}
Again, we do not know, whether the stochastic integral is a martingale, and we need again a localization. 
Consider as in  Lemma \ref{Monotonie} a sequence of stopping times $(\tau_n)_{n\in\N}$ with $\tau_n\to\infty$ a.s., such that 
\begin{equation*}
\int_0^{t\wedge\tau_n}\frac2{J^2}\sum\limits_{k,j=1}^J\langle e_s^{(k)},e_s^{(j)}\rangle\langle \Gamma^{-\frac12}Ae_s^{(k)},\d(W_s^{(j)}-\overline W_s)\rangle
\end{equation*}
is a martingale. We obtain for all $n\in\N$
\begin{eqnarray*}
\fl\E[\frac1J\sum\limits_{j=1}^J|e_{t\wedge\tau_n}^{(j)}|^2] = &\E[\frac1J\sum\limits_{j=1}^J|e_0^{(j)}|^2] 
		-\E[\int_0^{t\wedge\tau_n}\frac{J+1}{J^3}\sum\limits_{j,k=1}^J\langle e_s^{(k)},e_s^{(j)}\rangle\langle \Gamma^{-\frac12}Ae_s^{(k)},\Gamma^{-\frac12}Ae_s^{(j)}\rangle\,\d s]
\end{eqnarray*}
and hence, as we have the positivity of the integrand by  Lemma \ref{nonneg}, we obtain that 
$\E[\frac1J\sum\limits_{j=1}^J|e_{t\wedge\tau_n}^{(j)}|^2]$ is monotonically decreasing and bounded. 
{Analogously to the proof of Lemma \ref{Monotonie}, 
we can pass to the limit $n\to\infty$ by Fatou's lemma and the monotone convergence theorem. 
This implies for $t>s\ge0$
\begin{eqnarray*}
\fl\E[\frac1J\sum\limits_{j=1}^J|e_{t+s}^{(j)}|^2] \le &\E[\frac1J\sum\limits_{j=1}^J|e_s^{(j)}|^2] 
-\E[\int_s^{s+t}\frac{J+1}{J^3}\sum\limits_{j,k=1}^J\langle e_r^{(k)},e_r^{(j)}\rangle\langle \Gamma^{-\frac12}Ae_r^{(k)},\Gamma^{-\frac12}Ae_r^{(j)}\rangle\,\d r]
\end{eqnarray*}
In particular, it follows that $\E[\frac1J\sum\limits_{j=1}^J|e_{t}^{(j)}|^2]$ is monotonically decreasing.}

\end{proof}
}

\section{Convergence to ground truth}
\label{sec5}

{
Under the assumption that $y$ is the image of a truth $u^{\dagger}\in\cX$ under $A$, we are interested now in the analysis of the convergence to the truth.
Recall the equation
\begin{equation*}
\d\mathfrak r_t^{(j)} = -C(\mathfrak r_t)\mathfrak r_t^{(j)}\,\d t+C(\mathfrak r_t)\,\d W_t^{(j)}\,.
\end{equation*}
{The following properties can be shown for the residuals.}
\begin{proposition}
Let $y$ be the image of a truth $u^\dagger\in\cX$ under A and $u_0=(u_0^{(j)})_{j\in\{1,\dots,J\}}$ be $\cF_0$-measurable maps $u_0^{(j)}:\Omega\to\cX$ such that $\E[\frac1J\sum\limits_{j=1}^J|\mathfrak r_0^{(j)}|^2]<\infty$. Then
$\E[\frac1J\sum\limits_{j=1}^J|\mathfrak r_t^{(j)}|^2]^{\frac12}$ is monotonically decreasing.
\end{proposition}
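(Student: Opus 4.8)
The plan is to follow the same template as the proof of Lemma~\ref{Monotonie} and Proposition~\ref{Prop_Parameter}: apply It\^o's formula to $|\mathfrak r_t^{(j)}|^2$, sum over the ensemble, show that the resulting drift of $\frac1J\sum_j|\mathfrak r_t^{(j)}|^2$ is non-positive, and then pass to expectations by a localization argument. The only genuinely new ingredient is controlling the It\^o correction, which is larger here than in the centered case because the driving noise is the full Brownian motion $\d W_t^{(j)}$ rather than the centered increment $\d(W_t^{(j)}-\overline W_t)$.

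First I would note that $C(\mathfrak r_t)=C(\mathfrak e_t)$, since the empirical covariance depends only on the centered ensemble, so the residual equation reads $\d\mathfrak r_t^{(j)}=-C(\mathfrak e_t)\mathfrak r_t^{(j)}\,\d t+C(\mathfrak e_t)\,\d W_t^{(j)}$. It\^o's formula, with the quadratic variation evaluated as in Lemma~\ref{BM_innerproduct}, then gives
\[
\d|\mathfrak r_t^{(j)}|^2=-\frac2J\sum_{k=1}^J\langle \mathfrak r_t^{(j)},\mathfrak e_t^{(k)}\rangle^2\,\d t+\trace\bigl(C(\mathfrak e_t)^2\bigr)\,\d t+2\,\mathfrak r_t^{(j)\top}C(\mathfrak e_t)\,\d W_t^{(j)},
\]
where $\trace(C(\mathfrak e_t)^2)=\frac1{J^2}\sum_{k,l}\langle \mathfrak e_t^{(k)},\mathfrak e_t^{(l)}\rangle^2$. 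This correction is strictly positive, so — in contrast to the centered case — it cannot simply be discarded and must instead be shown to be dominated by the drift.

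The crucial step is the decomposition $\mathfrak r^{(j)}=\mathfrak e^{(j)}+\overline{\mathfrak r}$. Using $\sum_j\mathfrak e^{(j)}=0$ one obtains $\sum_{j,k}\langle \mathfrak r^{(j)},\mathfrak e^{(k)}\rangle^2=\sum_{j,k}\langle \mathfrak e^{(j)},\mathfrak e^{(k)}\rangle^2+J\sum_k\langle \overline{\mathfrak r},\mathfrak e^{(k)}\rangle^2$, so after summing over $j$ and dividing by $J$ the drift of $\frac1J\sum_j|\mathfrak r_t^{(j)}|^2$ collapses to
\[
-\frac1{J^2}\sum_{j,k=1}^J\langle \mathfrak e_t^{(j)},\mathfrak e_t^{(k)}\rangle^2-\frac2J\sum_{k=1}^J\langle \overline{\mathfrak r}_t,\mathfrak e_t^{(k)}\rangle^2\le 0.
\]
Here the positive It\^o correction $\frac1{J^2}\sum_{k,l}\langle \mathfrak e^{(k)},\mathfrak e^{(l)}\rangle^2$ is exactly overbalanced by the $-\frac2{J^2}\sum_{j,k}\langle \mathfrak e^{(j)},\mathfrak e^{(k)}\rangle^2$ piece arising from the drift, leaving a net negative coefficient, while the remaining $\overline{\mathfrak r}$-term is manifestly non-positive. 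I expect this cancellation — confirming that the enlarged (non-centered) noise does not destroy monotonicity — to be the main obstacle of the argument.

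Finally I would reproduce the localization argument of Lemma~\ref{Monotonie} and Proposition~\ref{Prop_Parameter} verbatim: introduce stopping times $\tau_n\to\infty$ almost surely that turn the stochastic integral $2\,\mathfrak r_t^{(j)\top}C(\mathfrak e_t)\,\d W_t^{(j)}$ into a true martingale, take expectations over an interval $[s,s+t]$, and pass to the limit using Fatou's lemma on the left-hand side and the monotone convergence theorem on the right-hand side. This yields $\E[\frac1J\sum_j|\mathfrak r_{s+t}^{(j)}|^2]\le\E[\frac1J\sum_j|\mathfrak r_s^{(j)}|^2]$ for all $s,t\ge0$, i.e.\ $t\mapsto\E[\frac1J\sum_j|\mathfrak r_t^{(j)}|^2]$ is monotonically decreasing; since $x\mapsto\sqrt x$ is increasing, the square root $\E[\frac1J\sum_j|\mathfrak r_t^{(j)}|^2]^{\frac12}$ is monotonically decreasing as claimed.
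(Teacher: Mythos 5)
Your proposal is correct and follows essentially the same route as the paper: the paper's proof is a one-line reference to the argument of Proposition~\ref{Prop_Parameter} (It\^o's formula, non-positivity of the averaged drift, localization with Fatou and monotone convergence), and your explicit expansion $\mathfrak r^{(j)}=\mathfrak e^{(j)}+\overline{\mathfrak r}$ showing the It\^o correction $\trace(C(\mathfrak e)^2)$ is absorbed by the drift is exactly the content of the paper's Lemma~\ref{Lemma_App} (as used in the proof of Theorem~\ref{Theorem_VInf}), just written inline. The computation and the limiting argument are both sound, so this matches the intended proof.
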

}
{
\begin{proof}
The assertions follow by arguments similar to the proof of Proposition \ref{Prop_Parameter}.
\end{proof}
}

{The main issue in showing convergence of the residuals $\mathfrak r^{(j)}$ to zero is, 
seemingly paradoxically, the fact of ensemble collapse. 
Now obviously, convergence cannot happen without ensemble collapse 
(as the particles cannot converge on the same point if their distance to their joint mean does not vanish) but ensemble collapse itself actually delays convergence. 
To see this, consider the following toy model. It is deterministic, but the same effects can be observed by straightforward extension to an SDE setting.
\begin{eqnarray*}
w'(t) &= -w(t)^3\\
z'(t) &= -w(t)^2 \cdot z(t).
\end{eqnarray*}
Now this system of ODEs can be solved explicitly by separation of variables and it can be seen that for any $w(0) \neq 0$, both $w$ and $z$ will converge to $0$ for $t\to\infty$. It makes sense to try and apply Lyapunov theory with a straightforward Lyapunov functional $V(w,z) = \frac{1}{2}(w^2+z^2)$. Then 
\begin{eqnarray*}
\fl\dot V(w,z) = \langle w(t) \cdot (-w(t)^3) + z(t)\cdot (-w(t)^2\cdot z(t)) = -w(t)^2 \cdot V(w(t),z(t))
\end{eqnarray*}
But $\dot V(w,z)$ is not negatively definite in any neighborhood of $(0,0)$ (the problem being the manifold ${w=0}$). This means we cannot prove that $0$ is an asymptotically stable equilibrium. It actually is not asymptotically stable: If $w(t)$ happens to become $0$ at any time $t$, the whole dynamics will stop there and will not approach $(0,0)$ any further. The origin is rather ``asymptotically stable if bounded away from ${w=0}$'' in a double-cone-like manner.

In a similar way, the solution of the ODE $y'(t) = -t^{-\alpha}y(t)$ will only converge to $0$ if $\alpha \leq 1$, i.e.\ if the rate function does not converge to $0$ too fast. 

The EnKF dynamics works like this toy model: If the ensemble collapse (played by $w$ in the first model and the rate function $t^{-\alpha}$ in the second model) happens too fast, we cannot expect convergence. We suspect that it is possible to prove that the ensemble collapse can be bounded from below (in contrast to also being bounded from above by virtue of theorem \ref{Thm_main3}) and this is the subject of ongoing work. However, the numerical experiments suggest that the collapse happens too fast. In order to circumvent this issue of ``too quick ensemble collapse'' we use artificial inflation of the covariance operator by addition of a positively definite operator (but this is gradually reduced with a certain rate). In addition to solving the problem of counterproductive ensemble collapse, variance inflation stabilizes the convergence in a very suitable manner and is used in practice for this reason, 
see e.g.\ \cite{Evensen2003, 2015arXiv150708319T}.
}
\subsection{Variance Inflation}

In order to correct rank deficiencies of the empirical covariance operator $C(\mathfrak r)$, 
we will use variance inflation in the following sense.
Let $B\in\cL(\R^K,\R^K)$ be a positive definite operator (for example the identity)  
and consider the equation
\begin{equation}\label{lambda_VInf}
\fl\d\mathfrak r_t^{(j)} = -\left(C(\mathfrak r_t)+\frac1{t^{\alpha}+R}B\right)\mathfrak r_t^{(j)}\,\d t+C(\mathfrak r_t)\,\d W_t^{(j)},\quad \alpha\in(0,1),R>0.
\end{equation}
This modification gives convergence of the mapped residuals.
\db{For sufficiently small $\mathfrak r_t$, the new term will dominate, 
and for $\alpha\in(0,1)$ we then expect convergence to $0$ at a rate faster than any polynomial.
The question is now whether and when this asymptotic for small $\mathfrak r_t$ sets in.}
\begin{theorem}\label{Theorem_VInf}
Assume that $y$ is the image of a truth $u^{\dagger}\in\cX$ under $A$ 
and let $\mathfrak r_0 = (\mathfrak r_0^{(j)})_{j\in\{1,\dots,J\}}$ 
be $\cF_0$-measurable maps $\mathfrak r_0^{(j)}:\Omega\to\R^{K}$ 
such that $\E[\frac1J\sum\limits_{j=1}^J|\mathfrak r_0^{(j)}|^2]<\infty$, 
$B\in\cL(\R^K,\R^K)$ a positive definite operator 
and $(\mathfrak r_t^{(j)})_{t\ge0,j=1,\dots,J}$ the solution of (\ref{lambda_VInf}). 
Then \db{for all $\beta>0$} it holds true that
$\E[\frac1J\sum\limits_{j=1}^J|\mathfrak r_t^{(j)}|^2]\in\cO(\db{t^{-\beta}})$ 
and $\E[\frac1J\sum\limits_{j=1}^J|\mathfrak r_t^{(j)}|^2]$ is monotonically decreasing.
\end{theorem}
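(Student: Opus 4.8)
The plan is to reduce the claim to a scalar differential inequality for $h(t):=\E[\frac1J\sum_{j=1}^J|\mathfrak r_t^{(j)}|^2]$, closely following the proof of Theorem \ref{Thm_main2} but now with the strictly dissipative inflation term doing the work. Writing $\lambda(t):=(t^\alpha+R)^{-1}$ and abbreviating $C_t:=C(\mathfrak r_t)=C(\mathfrak e_t)$, I would first apply It\^o's formula to $|\mathfrak r_t^{(j)}|^2$ for the dynamics \eref{lambda_VInf}. Since the diffusion coefficient is unchanged from the inflation-free residual equation, the It\^o correction is again $\|C_t\|_F^2\,\d t$, while the drift picks up the extra term $-2\lambda(t)\langle\mathfrak r_t^{(j)},B\mathfrak r_t^{(j)}\rangle$. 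Summing over $j$ and using the elementary identity
\begin{equation*}
\frac1J\sum_{j=1}^J\langle\mathfrak r^{(j)},C_t\mathfrak r^{(j)}\rangle=\|C_t\|_F^2+\langle\overline{\mathfrak r},C_t\overline{\mathfrak r}\rangle,
\end{equation*}
which follows from $\frac1J\sum_j\mathfrak e^{(j)}=0$ together with $\frac1J\sum_j\langle\mathfrak e^{(j)},C_t\mathfrak e^{(j)}\rangle=\|C_t\|_F^2$, the two $C_t$-contributions collapse and yield
\begin{equation*}
\d\Big(\frac1J\sum_{j=1}^J|\mathfrak r_t^{(j)}|^2\Big)=\Big[-\|C_t\|_F^2-2\langle\overline{\mathfrak r}_t,C_t\overline{\mathfrak r}_t\rangle-\frac{2\lambda(t)}J\sum_{j=1}^J\langle\mathfrak r_t^{(j)},B\mathfrak r_t^{(j)}\rangle\Big]\d t+\d M_t,
\end{equation*}
where $M_t$ collects the stochastic integrals and every drift term is non-positive.

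Next I would take expectations. As in Lemma \ref{Monotonie}, I cannot integrate the stochastic term directly, so I localize with stopping times $\tau_n\to\infty$ that make $M$ a genuine martingale on the shifted intervals, take expectations (killing the martingale part), and pass to the limit with Fatou's lemma on the left and monotone convergence on the right, the latter being legitimate because the drift integrand is non-negative. This produces, for all $t>s\ge0$,
\begin{equation*}
h(t)\le h(s)-\int_s^{t}\Big(\E\big[\|C_r\|_F^2+2\langle\overline{\mathfrak r}_r,C_r\overline{\mathfrak r}_r\rangle\big]+2\lambda(r)\,\E\big[\tfrac1J\textstyle\sum_{j}\langle\mathfrak r_r^{(j)},B\mathfrak r_r^{(j)}\rangle\big]\Big)\d r.
\end{equation*}
Non-negativity of the whole integrand immediately gives that $h$ is monotonically decreasing, which is the second assertion (and in particular $h\le h(0)<\infty$).

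For the rate I would discard the non-negative $C_r$-terms and bound $\langle\mathfrak r^{(j)},B\mathfrak r^{(j)}\rangle\ge\lambda_B|\mathfrak r^{(j)}|^2$, where $\lambda_B>0$ is the smallest eigenvalue of the positive definite operator $B$. This reduces the displayed inequality to
\begin{equation*}
h(t)\le h(0)-2\lambda_B\int_0^t\lambda(r)\,h(r)\,\d r.
\end{equation*}
Setting $u(t):=\int_0^t\lambda(r)h(r)\,\d r$ (absolutely continuous since $\lambda h\le h(0)/R$ is bounded) gives $u'=\lambda h\le\lambda(h(0)-2\lambda_B u)$ a.e.; multiplying by the integrating factor $\exp(2\lambda_B\int_0^t\lambda)$ and integrating yields the Gr\"onwall bound
\begin{equation*}
h(t)\le h(0)\exp\!\Big(-2\lambda_B\int_0^t\frac{\d r}{r^\alpha+R}\Big).
\end{equation*}
Finally, because $\alpha\in(0,1)$, for $r\ge R^{1/\alpha}$ one has $r^\alpha+R\le2r^\alpha$, so $\int_0^t(r^\alpha+R)^{-1}\,\d r\ge c_1t^{1-\alpha}-c_2$ with $c_1>0$; hence $h(t)\le C\exp(-c\,t^{1-\alpha})$ with $c>0$, and a stretched exponential with positive exponent $1-\alpha$ decays faster than any power, so $h(t)\in\cO(t^{-\beta})$ for every $\beta>0$. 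It is worth stressing that $\alpha<1$ is used precisely here: at $\alpha=1$ the integral grows only like $\log t$ and one would obtain a mere polynomial rate.

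I expect the main obstacle to be the expectation-taking step rather than the (essentially deterministic) Gr\"onwall argument: one must argue, exactly as in Lemma \ref{Monotonie} via localization and the Fatou/monotone-convergence limits, that the stochastic integral contributes nothing, instead of assuming outright that $M_t$ is a true martingale --- which would require a priori fourth-moment control of $\mathfrak e_t$ as in Lemma \ref{martingale}. The only other point deserving care is the lower bound on $\int_0^t(r^\alpha+R)^{-1}\,\d r$, which is what upgrades the crude exponential estimate to the stated $\cO(t^{-\beta})$ for all $\beta$.
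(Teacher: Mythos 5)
Your setup matches the paper's own proof of Theorem \ref{Theorem_VInf} almost step for step: the same It\^o computation (your exact identity $\frac1J\sum_j\langle\mathfrak r^{(j)},C\mathfrak r^{(j)}\rangle=\|C\|_F^2+\langle\overline{\mathfrak r},C\overline{\mathfrak r}\rangle$ plays the role of the paper's Lemma \ref{Lemma_App}, which is the inequality version of it), the same localization/Fatou/monotone-convergence passage to the two-time integral inequality as in Lemma \ref{Monotonie}, and the same monotonicity conclusion. You diverge only in the rate argument: the paper weights by $t^\beta$ and shows $t^\beta\,\E S(t)$ stays bounded because the integrand $\tau^{\beta-1}\bigl[\beta-\frac{2\lambda_{\min}}{J}\frac{\tau}{\tau^\alpha+R}\bigr]\E S(\tau)$ becomes negative beyond a finite time $T$, whereas you aim for a stretched-exponential bound $h(t)\le C\exp(-c\,t^{1-\alpha})$, which is a stronger statement and is consistent with the paper's remark (after Theorem \ref{Theorem_VInf2}) that the decay beats every polynomial with $\beta$-dependent constants.

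However, the Gr\"onwall step as you wrote it does not work: Gr\"onwall with a \emph{dissipative} (negative) kernel cannot be closed by the naive integrating-factor computation. With $\Lambda(t):=\int_0^t\lambda(r)\,\d r$, your differential inequality $u'=\lambda h\le\lambda\bigl(h(0)-2\lambda_B u\bigr)$ yields, by comparison, an \emph{upper} bound $u(t)\le\frac{h(0)}{2\lambda_B}\bigl(1-e^{-2\lambda_B\Lambda(t)}\bigr)$; but to deduce $h(t)\le h(0)-2\lambda_B u(t)\le h(0)e^{-2\lambda_B\Lambda(t)}$ you would need the \emph{lower} bound $u(t)\ge\frac{h(0)}{2\lambda_B}\bigl(1-e^{-2\lambda_B\Lambda(t)}\bigr)$, i.e.\ exactly the opposite direction. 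Indeed, the one-sided inequality $h(t)\le h(0)-2\lambda_B\int_0^t\lambda h$ together with monotonicity of $h$ only gives $h(t)\le h(0)/(1+2\lambda_B\Lambda(t))$, which with $\Lambda(t)\sim t^{1-\alpha}$ is far short of $\cO(t^{-\beta})$ for all $\beta$. The claimed exponential bound is nevertheless true, and you already hold the two ingredients needed to repair the step: the integral inequality for \emph{arbitrary} $s\le t$ and the monotonicity of $h$. Using $h(r)\ge h(t)$ on $[s,t]$ gives $h(t)\le h(s)\bigl(1+2\lambda_B(\Lambda(t)-\Lambda(s))\bigr)^{-1}$; iterating this over a partition of $[0,t]$ and refining the mesh yields $h(t)\le h(0)e^{-2\lambda_B\Lambda(t)}$, after which your lower bound $\Lambda(t)\ge c_1t^{1-\alpha}-c_2$ (this is where $\alpha<1$ enters, as you correctly note) and the final conclusion go through. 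Alternatively, the paper's $t^\beta$-weighting avoids Gr\"onwall altogether and needs nothing beyond the monotonicity of $\E S$, at the price of proving only the stated polynomial rates rather than the stretched-exponential one.
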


\begin{proof}
Let $B\in\cL(\R^K,\R^K)$ be a positive definite operator, $\alpha\in(0,1),\ R>0$ and assume, that that the smallest eigenvalue of B is $\lambda_{\mbox{min}} = c>0$.

We derive an equation for $\frac1J\sum\limits_{j=1}^J|\mathfrak r_t^{(j)}|^2$ by using It\^o's formula:
\begin{eqnarray*}
\d|\mathfrak r_t^{(j)}|^2 	&= -2\left\langle\mathfrak r_t^{(j)},\left(C(\mathfrak r_t)+\frac{1}{t^\alpha+R}B\right)\mathfrak r_t^{(j)}\right\rangle\,\d t + 2\langle\mathfrak r_t^{(j)},C(\mathfrak r_t)\d W_t^{(j)}\rangle\\
						&\quad+\frac1J\sum\limits_{j=1}^J\left\langle\mathfrak r_t^{(k)}-\overline{\mathfrak r_t},C(\mathfrak r_t)(\mathfrak r_t^{(k)}-\overline{\mathfrak r_t})\right\rangle\,\d t.
\end{eqnarray*}
Taking the empirical mean over all particles {yields}
\begin{eqnarray*}
\fl\d\frac1J\sum\limits_{j=1}^J|\mathfrak r_t^{(j)}|^2 	&= -\frac2J\sum\limits_{j=1}^J\left\langle\mathfrak r_t^{(j)},\left(C(\mathfrak r_t)+\frac{1}{t^{\alpha}+R}B\right)\mathfrak r_t^{(j)}\right\rangle\,\d t + \frac2J\sum\limits_{j=1}^J\langle\mathfrak r_t^{(j)},C(\mathfrak r_t)\d W^{(j)}\rangle\\
				&\quad +\frac1J\sum\limits_{k=1}^J\langle\mathfrak r_t^{(k)}-\overline{\mathfrak r_t},C(\mathfrak r_t)(\mathfrak r_t^{(k)}-\overline{\mathfrak r}_t)\rangle\,\d t\,.
\end{eqnarray*}
{{Thus, for all $t,s\ge0$, it follows similarly to the proof of Lemma \ref{Monotonie} that}
\begin{eqnarray*}
\fl\E\left[\frac1J\sum\limits_{j=1}^J|\mathfrak r_{t+s}^{(j)}|^2\right]	&\le\E\left[\frac1J\sum\limits_{j=1}^J|\mathfrak r_s^{(j)}|^2\right]-\frac{2}J\int_s^{s+t} \E\left[\sum\limits_{j=1}^J\langle\mathfrak r_r^{(j)},C(\mathfrak r_r)\mathfrak r_r^{(j)}\rangle\right]\,\d r\\
&\quad-\frac2J\int_s^{s+t} \frac{1}{r^{\alpha}+R}\E\left[\sum\limits_{j=1}^J\langle \mathfrak r_r^{(j)},B\mathfrak r_r^{(j)}\rangle\right]\,\d r\\
&\quad+\frac1J\int_s^{s+t} \E\left[\sum\limits_{j=1}^J\langle\mathfrak r_r^{(j)}-\overline{\mathfrak r}_r,C(\mathfrak r_r)(\mathfrak r_r^{(j)}-\overline{\mathfrak r}_r)\rangle\right]\,\d r \\
&\le \E\left[\frac1J\sum\limits_{j=1}^J|\mathfrak r_s^{(j)}|^2\right] - \frac{1}J\int_s^{s+t} \E\left[\sum\limits_{j=1}^J\langle\mathfrak r_r^{(j)},\left(C(\mathfrak r_r)+\frac{1}{r^{\alpha}+R}B\right)\mathfrak r_r^{(j)}\rangle\right]\,\d r\,,
\end{eqnarray*}
where we used Lemma \ref{nonneg} and the non-negativity of $B$.
This yields the monotonicity, as both the covariance $C(\mathfrak{r}_r)$ as well as $B$ are non-negative matrices. }

{
Now we will improve the estimate to obtain the asymptotic rate. 
Consider $S(t)=\frac1J\sum\limits_{j=1}^J|\mathfrak r_{t}^{(j)}|^2$, then 
\[
\d(t^\beta S(t)) =  \beta t^{\beta-1} S(t) dt + t^\beta \d S(t) \;.
\]
Now we can use all the previous estimates for the terms in $\d S$ together with the non-negativity of the covariance matrix $C(\mathfrak r_{t})$
and $B\geq\lambda_{\mbox{min}} >0$ to obtain
\begin{eqnarray*}
 t^\beta \E  S(t) 
 & \leq & \beta \int_0^t \tau^{\beta-1}\E S(\tau) d\tau - \frac2J   \int_0^t \tau^\beta \frac{\lambda_{\mbox{min}}}{\tau^\alpha+R} \E  S(\tau)d\tau \\
 & \leq & \int_0^t \tau^{\beta-1} \Big[ \beta  - \frac{2\lambda_{\mbox{min}}}{J}   \frac{ \tau}{\tau^\alpha+R} \Big] \E  S(\tau)d\tau\;.
\end{eqnarray*}
There is a time $T>0$ such that the integrand in the equation above is negative for all  $t>T$ 
and thus using the monotonicity of  $\E  S(\tau)$ we obtain for all  $t>T$ 
\[
t^\beta \E  S(t) \leq \int_0^T \tau^{\beta-1} \Big[ \beta  - \frac{2\lambda_{\mbox{min}}}{J}   \frac{ \tau}{\tau^\alpha+R} \Big] d\tau \E S(0),
\]
which yields the asymptotic rate $t^{-\beta}$ for $\E  S(t)$.
}
\end{proof}

\begin{remark}
In case of a positive semidefinite matrix $B$, the convergence of the residuals will then take place in the image space of the matrix $B$. 
The proof can be straightforwardly generalized to this setting by projections of the quantities to the corresponding subspace.
\end{remark}

\db{We can also verify almost sure convergence faster than any polynomial rate.}

\begin{theorem}\label{Theorem_VInf2}
Assume that $y$ is the image of a truth $u^{\dagger}\in\cX$ under $A$ 
and let $\mathfrak r_0 = (\mathfrak r_0^{(j)})_{j\in\{1,\dots,J\}}$ 
be $\cF_0$-measurable maps $\mathfrak r_0^{(j)}:\Omega\to\R^{K}$ and $B\in\cL(\R^K,\R^K)$ a positive definite operator. 
Then the solution of \eref{lambda_VInf} is almost surely asymptotically stable with rate function $\rho(t)=t^{-\frac\beta2}$ for all \db{$\beta>0$}. In particular, $(\mathfrak r_t^{(j)})_{j=1,\dots,J}$ converges to zero almost surely as $t\to\infty$.
\end{theorem}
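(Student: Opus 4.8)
The plan is to mirror the structure of the almost sure collapse result (Theorem~\ref{Thm_main4}), but now applied to the variance-inflated dynamics \eref{lambda_VInf}. The natural candidate is a time-weighted stochastic Lyapunov function of the form
\begin{equation*}
V(\mathfrak r,t)=\gamma(t)\,\frac1J\sum\limits_{j=1}^J|\mathfrak r^{(j)}|^2,
\end{equation*}
with a weight $\gamma$ to be chosen later, and to verify the hypotheses of Theorem~4.6.2 of \cite{Mao2008}. First I would compute the generator $LV$ for the inflated system. The covariance term $-C(\mathfrak r)\mathfrak r^{(j)}$ contributes exactly the non-positive quadratic terms already seen in the proof of Theorem~\ref{Thm_main4} (and can simply be discarded by non-negativity, via Lemma~\ref{nonneg}), while the It\^o correction from the diffusion $C(\mathfrak r)\,\d W^{(j)}$ is again controlled by the quartic terms. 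The crucial new ingredient is the inflation term, which produces a contribution of size $-\gamma(t)\,\tfrac{2}{t^\alpha+R}\,\tfrac1J\sum_j\langle\mathfrak r^{(j)},B\mathfrak r^{(j)}\rangle$; using $B\ge\lambda_{\min}>0$ this dominates and gives a strictly negative linear-in-$V$ term $-\tfrac{2\lambda_{\min}}{t^\alpha+R}\,V$.

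The key difference from Theorem~\ref{Thm_main4} is that here the strong dissipation comes from the inflation rather than from the (vanishing) collapse. I would choose $\gamma(t)$ growing faster than any polynomial but slowly enough that the remaining terms are integrable. Concretely, combining the $\gamma'(t)V/\gamma(t)$ term arising from differentiating the weight in time with the dissipative inflation term, the bound takes the form
\begin{equation*}
LV(\mathfrak r,t)\le\left(\frac{\gamma'(t)}{\gamma(t)}-\frac{2\lambda_{\min}}{t^\alpha+R}\right)V(\mathfrak r,t)+\eta(t),
\end{equation*}
where $\eta(t)$ collects the quartic It\^o-correction terms after maximizing in $(|\mathfrak r^{(j)}|^2)_j$, exactly as in the proof of Theorem~\ref{Thm_main4}. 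Since $t^\alpha+R$ grows only like $t^\alpha$ with $\alpha\in(0,1)$, the inflation coefficient $\tfrac{2\lambda_{\min}}{t^\alpha+R}$ decays slower than any genuinely polynomial weight $\gamma(t)=t^\beta$ (for which $\gamma'/\gamma=\beta/t$ decays like $t^{-1}$), so for large $t$ the bracketed coefficient is negative and the linear-in-$V$ term can be dropped. This is precisely the mechanism identified in the toy model and in Theorem~\ref{Theorem_VInf}: the inflation beats any polynomial rate.

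The main obstacle will be verifying the integrability condition $\int_0^\infty\eta(t)\,\d t<\infty$ required by Theorem~4.6.2 of \cite{Mao2008} while simultaneously achieving the polynomial rate $\rho(t)=t^{-\beta/2}$, i.e.\ working with $\gamma(t)=t^\beta$. The tension is that a polynomial weight contributes $\gamma'/\gamma\sim\beta/t$, which is \emph{not} itself integrable, so one cannot naively mimic the $\int_0^\infty\gamma'(s)^2/\gamma(s)\,\d s<\infty$ hypothesis of Theorem~\ref{Thm_main4}. The resolution is that the $\beta/t$ term is absorbed into the strictly larger inflation dissipation for $t>T$, leaving a genuinely negative coefficient so that only $\eta(t)$ need be integrable; and $\eta(t)$, being the maximized quartic correction weighted by $\gamma(t)$, must be shown to remain integrable despite the growth of $\gamma$. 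I would handle this by exploiting the second-moment decay already established in Theorem~\ref{Theorem_VInf}, namely $\E\,S(t)\in\cO(t^{-\beta'})$ for arbitrarily large $\beta'$, so that the quartic terms decay fast enough to offset any polynomial weight. Once the hypotheses of Theorem~4.6.2 are met, almost sure asymptotic stability with rate $\rho(t)=\gamma(t)^{-1/2}=t^{-\beta/2}$ follows directly, and letting $\beta$ be arbitrary yields convergence faster than any polynomial rate.
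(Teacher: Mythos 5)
Your overall skeleton is right and matches the paper's: the time-weighted Lyapunov function $V(\mathfrak r,t)=t^\beta\frac1J\sum_j|\mathfrak r^{(j)}|^2$, extracting the dissipation $-\frac{2\lambda_{\min}}{t^\alpha+R}V$ from the inflation term, and absorbing the $\beta/t$ contribution of the weight into that dissipation past a finite time $T$. But there is a genuine gap in your treatment of the It\^o correction, and it is not a technicality. You propose to discard the drift's covariance term $-C(\mathfrak r)\mathfrak r^{(j)}$ by non-negativity (Lemma \ref{nonneg}) and then to turn the remaining correction into an integrable $\eta(t)$ ``after maximizing in $(|\mathfrak r^{(j)}|^2)_j$, exactly as in the proof of Theorem \ref{Thm_main4}.'' This cannot work: the It\^o correction here is $+\,t^\beta\frac1J\sum_k\langle\mathfrak r^{(k)}-\overline{\mathfrak r}\,,C(\mathfrak r)(\mathfrak r^{(k)}-\overline{\mathfrak r})\rangle$, a \emph{non-negative quartic} in the state, and its supremum over the state space is $+\infty$. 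In Theorem \ref{Thm_main4} the maximization was of a quadratic-minus-quartic expression, and the negative quartic came precisely from the drift term you are throwing away; the inflation term is only quadratic in $\mathfrak r$ and cannot dominate a quartic. The paper's fix is to \emph{keep} the drift quartic and invoke Lemma \ref{Lemma_App}: the centered correction is dominated by one of the two copies of $\frac1J\sum_j\langle\mathfrak r^{(j)},C(\mathfrak r)\mathfrak r^{(j)}\rangle$ produced by the drift, so the quartic terms net to $\le 0$, there is no $\eta(t)$ at all, and one obtains
\begin{equation*}
LV(\mathfrak r,t)\le\frac1J\sum_{j=1}^J|\mathfrak r^{(j)}|^2\left(\beta-\frac{\lambda_{\min}\,t}{t^\alpha+R}\right)t^{\beta-1}.
\end{equation*}

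Your fallback for the ``main obstacle'' --- invoking the second-moment decay $\E\,S(t)\in\cO(t^{-\beta'})$ from Theorem \ref{Theorem_VInf} to show $\int_0^\infty\eta(t)\,\d t<\infty$ --- also fails on two counts. First, in the Mao-type criterion $\eta(t)$ must be a deterministic bound on $LV$ uniform in the state; you cannot substitute a moment bound along the solution into it. Second, even if you pass (as the paper effectively does) to an argument requiring only that the expected time-integral of the positive part of the drift of $V(\mathfrak r_t,t)$ be finite, the weighted It\^o correction is quartic, so you would need \emph{fourth}-moment decay for the inflated system --- which has not been established (Theorem \ref{Theorem_VInf} gives second moments only, and the higher-moment bounds of Theorem \ref{Thm_main3} concern $\mathfrak e$ without inflation). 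Once the quartics are cancelled via Lemma \ref{Lemma_App}, the only remaining positive contribution is $\beta t^{\beta-1}S(t)$ on the finite interval $[0,T]$ --- a piece your sketch leaves unaddressed --- and it is handled by $\E\int_0^T\beta s^{\beta-1}S(s)\,\d s\le \beta T^{\beta} \sup_{s \le T}\E\,S(s)<\infty$, using only the monotonicity of $\E\,S$ proved in Theorem \ref{Theorem_VInf}.
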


\begin{proof}
We define the Lyapunov function
\begin{equation*}
V(\mathfrak r,t) = t^\beta \frac1J\sum\limits_{j=1}^J|\mathfrak r^{(j)}|^2
\end{equation*}
and obtain
\begin{equation*}
LV(\mathfrak r,t) 
\le \frac{\beta t^{\beta-1}}J\sum\limits_{j=1}^J|\mathfrak r^{(j)}|^2 
- \db{t^\beta} \frac1J\sum\limits_{j=1}^J\langle\mathfrak r^{(j)},\left(C(\mathfrak r)+\frac{1}{t^\alpha+R}B\right)\mathfrak r^{(j)}\rangle.\end{equation*}

Thus,
\begin{equation*}
LV(\mathfrak r, t)
\le \db{\frac1J \sum\limits_{j=1}^J|\mathfrak r^{(j)}|^2\left(\beta -\frac{\lambda_{\min} t}{t^\alpha+R}\right)t^{\beta-1}.}
\end{equation*}

\db{There} is a $T>0$ such that the bracket above is non-positive for all $t\geq T$.
We obtain  $\int_0^\infty LV(\mathfrak r,t)\,\d t\le \int_0^TLV(\mathfrak r,t)\,\d t.$ 
Moreover, by neglecting the negative term in the bracket for $t\leq T$ we obtain
\begin{equation*}
\E[\int_0^T LV(\db{\mathfrak r_t},t)\,\d t] 
\le \E[\int_0^T\db{\beta s^{\beta-1} }\frac1J\sum\limits_{j=1}^J|\db{\mathfrak r^{(j)}_s}|^2\,\d s]
\le \db{\frac{T^\beta}{J}}\E[\sum\limits_{j=1}^J|\mathfrak r_0^{(j)}|^2]<\infty,
\end{equation*}
\db{by using the monotonicity of the sum.}
\db{Hence,} $\int_0^\infty LV(\db{\mathfrak r_t},t)\,\d t\db{<\infty}$  \db{and thus}
$\db{\mathfrak r_t}$ is almost surely asymptotically stable with rate function $\rho(t)=t^{-\frac\beta2}$.
\end{proof}

\cls{
\begin{remark}
Note that the convergence rate is faster than any polynomial rate. However, the proof reveals that the constant in the convergence result will grow w.r.t. the rate $\beta$ and $\alpha\in(0,1)$, which is consistent with the numerical experiments presented in section \ref{sec6}.  
\end{remark}
}

Our aim is to use variance inflation in the parameter space, such that we can apply Theorem \ref{Theorem_VInf}. We will use variance inflation in the finite dimensional system of SDEs of the coordinates in the parameter space.

Let $y\in A\cS$ where $A\cS$ is the linear span of $\{Au_0^{(1)},\dots,Au^{(J)}\}$ and consider the equation
\begin{equation}\label{u_VInf}
\fl\d u_t^{(j)} = (C(u_t)+\frac1{t^\alpha+R}B)A^*\Gamma^{-1}(y-Au_t^{(j)})\,\d t+C(u_t)A^*\Gamma^{-\frac12}\,\d W_t^{(j)},
\end{equation}
$j=1,\dots,J$, for $B$ positive definite, $R>0$ and $\alpha\in(0,1)$. Since $y\in A\cS$, the subspace property still holds, i.e.\ $u_t^{(j)}\in\cS$ for all $(t,j)\in[0,\infty)\times\{1,\dots,J\}$. The following result transfers the results of Theorem \ref{Theorem_VInf} to the parameter space:
\begin{corollary}
Let $y\in A\cS$ and assume that $y$ is the image of a truth $u^\dagger\in\cX$ under $A$, $A^*$ is assumed to be one-to-one and let $(u_t^{(j)})_{t\ge0,j=1,\dots,J}$ be the solution of (\ref{u_VInf}). Then 
\begin{enumerate}
\item $\lim\limits_{t\to\infty}\E[\frac1J\sum\limits_{j=1}^J|\mathfrak e_t^{(j)}|^2] = 0.$
\item $\lim\limits_{t\to\infty}\E[\frac1J\sum\limits_{j=1}^J|\mathfrak r_t^{(j)}|^2] = 0.$
\item $(\mathfrak r_t^{(j)})_{t\ge0}$ converges almost surely to zero with rate function $\rho(t)=t^{-\frac\beta2}$ for all $\beta\db{>0}$.
\end{enumerate}
\end{corollary}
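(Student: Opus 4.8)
The plan is to show that \eref{u_VInf}, after the linear change of variables $\mathfrak r^{(j)}=\Gamma^{-\frac12}Ar^{(j)}=\Gamma^{-\frac12}A(u^{(j)}-u^\dagger)$, is literally equation \eref{lambda_VInf} with a modified inflation operator, so that Theorems \ref{Theorem_VInf} and \ref{Theorem_VInf2} apply directly. Since $y=Au^\dagger$ we have $A^*\Gamma^{-1}(y-Au^{(j)})=-A^*\Gamma^{-\frac12}\mathfrak r^{(j)}$, and because $r\mapsto\Gamma^{-\frac12}Ar$ is linear, pushing \eref{u_VInf} forward through it incurs no It\^o correction. Using $C(u)=C(e)=C(r)$ together with the rank-one identity $\Gamma^{-\frac12}A\,(e^{(k)}\otimes e^{(k)})\,A^*\Gamma^{-\frac12}=\mathfrak e^{(k)}\otimes\mathfrak e^{(k)}$, which yields $\Gamma^{-\frac12}AC(u)A^*\Gamma^{-\frac12}=C(\mathfrak r)$ for both the drift and the diffusion, I would obtain
\[
\d\mathfrak r_t^{(j)}=-\Big(C(\mathfrak r_t)+\frac{1}{t^\alpha+R}\tilde B\Big)\mathfrak r_t^{(j)}\,\d t+C(\mathfrak r_t)\,\d W_t^{(j)},\qquad \tilde B:=\Gamma^{-\frac12}ABA^*\Gamma^{-\frac12},
\]
which is precisely \eref{lambda_VInf} with $B$ replaced by $\tilde B$.

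The step requiring care — and the only place the one-to-one hypothesis on $A^*$ enters — is verifying that $\tilde B$ is positive definite on $\R^K$, since Theorems \ref{Theorem_VInf} and \ref{Theorem_VInf2} presuppose an inflation operator with strictly positive smallest eigenvalue. For $h\in\R^K$ one computes $\langle h,\tilde Bh\rangle=\langle A^*\Gamma^{-\frac12}h,\,B\,A^*\Gamma^{-\frac12}h\rangle\ge0$ because $B>0$, with equality forcing $A^*\Gamma^{-\frac12}h=0$; as $\Gamma^{-\frac12}$ is invertible and $A^*$ is injective, this gives $h=0$. Hence $\tilde B$ is positive definite with $\lambda_{\min}(\tilde B)>0$, so the transformed system meets the standing hypotheses of both theorems. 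This identification of the transformed equation, and in particular the survival of the inflation term as the still-positive-definite operator $\tilde B$, is the genuine content of the proof; everything else is bookkeeping.

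With $\tilde B>0$ established, the three claims follow at once. Theorem \ref{Theorem_VInf} applied to the transformed equation gives $\E[\frac1J\sum_{j=1}^J|\mathfrak r_t^{(j)}|^2]\in\cO(t^{-\beta})$ for every $\beta>0$, hence claim (ii). Theorem \ref{Theorem_VInf2}, applied to the same equation, yields almost sure asymptotic stability of $(\mathfrak r_t^{(j)})$ with rate $\rho(t)=t^{-\frac\beta2}$, which is claim (iii). Claim (i) is then a free consequence of (ii): since $\mathfrak e^{(j)}=\mathfrak r^{(j)}-\overline{\mathfrak r}$, the elementary variance inequality $\frac1J\sum_j|\mathfrak e^{(j)}|^2=\frac1J\sum_j|\mathfrak r^{(j)}|^2-|\overline{\mathfrak r}|^2\le\frac1J\sum_j|\mathfrak r^{(j)}|^2$ gives $\E[\frac1J\sum_j|\mathfrak e_t^{(j)}|^2]\le\E[\frac1J\sum_j|\mathfrak r_t^{(j)}|^2]\to0$ as $t\to\infty$.
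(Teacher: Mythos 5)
Your proposal is correct, and for assertions (ii) and (iii) it coincides with the paper's argument: the paper likewise pushes \eref{u_VInf} forward through the linear map $r\mapsto\Gamma^{-\frac12}Ar$ (no It\^o correction needed) to obtain \eref{lambda_VInf} with inflation operator $\Gamma^{-\frac12}AB(\Gamma^{-\frac12}A)^*$ --- your $\tilde B$ --- and then invokes Theorems \ref{Theorem_VInf} and \ref{Theorem_VInf2}; your explicit verification that injectivity of $A^*$ together with invertibility of $\Gamma^{-\frac12}$ forces $\lambda_{\min}(\tilde B)>0$ fills in the one step the paper merely asserts. You genuinely diverge on assertion (i): the paper proves it by rerunning the It\^o-formula/localization/comparison machinery of Theorem \ref{Thm_main2} on the inflated dynamics of $\mathfrak e$ (legitimate, since the extra drift term $-\frac{1}{t^\alpha+R}\tilde B\,\mathfrak e^{(j)}$ is purely dissipative, so the bound \eref{eq:enscoll} survives), whereas you deduce (i) for free from (ii) via the bias--variance identity
\begin{equation*}
\frac1J\sum_{j=1}^J|\mathfrak e_t^{(j)}|^2=\frac1J\sum_{j=1}^J|\mathfrak r_t^{(j)}|^2-|\overline{\mathfrak r}_t|^2\le\frac1J\sum_{j=1}^J|\mathfrak r_t^{(j)}|^2.
\end{equation*}
Your route is more elementary --- it needs no further stochastic analysis --- and in fact yields a stronger conclusion: (i) inherits from (ii) the rate $\E[\frac1J\sum_{j=1}^J|\mathfrak e_t^{(j)}|^2]\in\cO(t^{-\beta})$ for every $\beta>0$, which is faster than the $\cO(t^{-1})$ decay delivered by the Theorem-\ref{Thm_main2}-style argument the paper gestures at. Since the corollary claims only the limit, either argument suffices, but yours is the tighter and shorter of the two.
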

\begin{proof}
{Let $R>0$ and $\alpha\in(0,1)$ 
and observe
\begin{equation*}
\d\mathfrak r_t^{(j)}=-(C(\mathfrak r_t)+\frac{1}{t^\alpha+R}\Gamma^{-\frac12}AB(\Gamma^{-\frac12}A)^*)\mathfrak r_t^{(j)}\,\d t+C(\mathfrak r_t)\,\d W_t^{(j)}.
\end{equation*}
Since $\Gamma^{-\frac12}AB(\Gamma^{-\frac12}A)^*$ is positive definite the second and third assertion follow directly from Theorem \ref{Theorem_VInf} and \ref{Theorem_VInf2}. The proof of the first assertion is similar to the proof of Theorem \ref{Thm_main2}.}
\end{proof}

\section{Numerical Results}\label{sec6}

We consider the problem of recovering the unknown data $u^\dagger$ from noise-free observations $$y^{\dagger}= A(u^\dagger),$$
where $p=\mathcal{A}^{-1}(u)$ is the solution of the one dimensional elliptic equation
\begin{equation}\label{FP}
\eqalign{-\frac{\mathrm{d}^2p}{\mathrm{d} x^2}+p=u \quad &\mbox{in} \ D:=(0,\pi), \cr
	p=0 &\mbox{on $\partial$D}.}
\end{equation}
The forward response operator is defined by 
\[A=\cO\circ \cA^{-1} 
\quad \mathrm{with}\quad 
\cA=-\frac{\mathrm{d}^2}{\mathrm{d} x^2}+id 
\quad\mathrm{on}\quad 
\cD(\cA)=H^2\cap H_0^1
\]
and with operator $\cO$ observing the dynamical system at $K=2^4-1$ equispaced observation points $x_k=\frac k{2^4}$, $k=1,\dots,K$. We approximate the forward-problem \eref{FP} numerically on a uniform mesh with meshwidth $h=2^{-8}$ by a finite element method with continuous, piecewise linear ansatz functions. 

We choose the initial ensemble of particles based on the eigenvalue and eigenfunctions $\{\lambda_j,z_j\}_{j\in\N}$ of the covariance operator $C_0$, defined by $C_0=\beta(\cA-\mathrm{id})^{-1}$ for $\beta=10$. 

From the Bayesian perspective we may interpret this as prior distributed by $\mu_0=\cN(0,C_0)$. We set our $j^{th}$ initial particle to $u^{(j)}(0)=\sqrt{\lambda_j}\zeta_j z_j$ with $\zeta_j\sim\cN(0,1)$, i.e.\ we use the Karhunen-Lo\`eve expansion to generate draws from $\mu_0$. 

The EnKF continuous time limit 
\begin{equation*}
\mathrm{d}u_t^{(j)} = C(u_t)A^*\Gamma^{-1}(y-Au_t^{(j)})\,\mathrm{d}t+C(u_t)A^*\Gamma^{-\frac12}\,\mathrm{d}W_t^{(j)},
\end{equation*}
is discretized by equation \eref{Iteration} for the following simulations.

\paragraph{Ensemble collapse}
In the following we illustrate the results from section \ref{Asymp_beh}, in particular the bounds on the ensemble collapse derived in Theorem \ref{Thm_main2} and in Theorem \ref{Thm_main3}.

\begin{figure}[!htb]
    \subfigure{\includegraphics[width=0.5\textwidth]{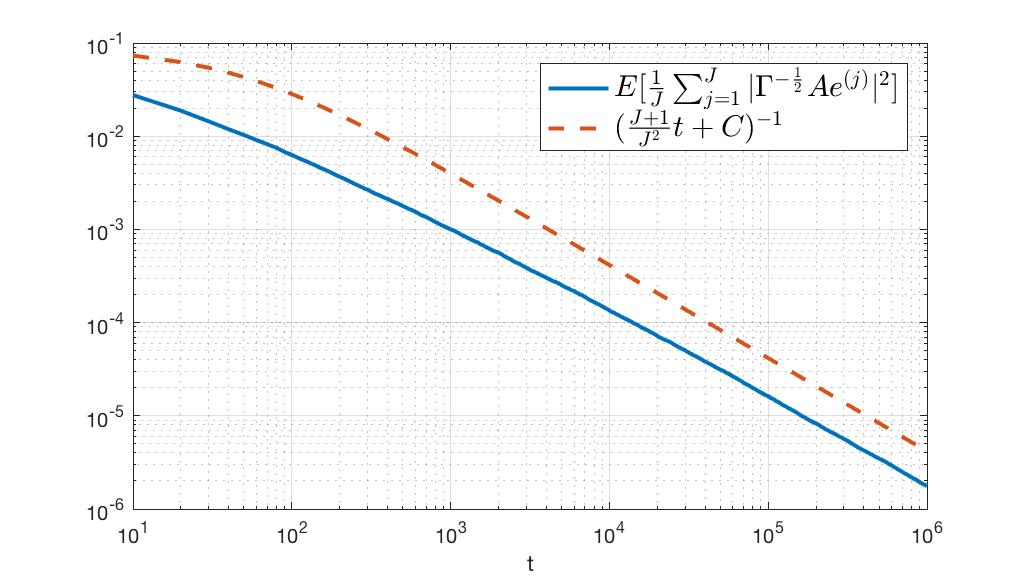}} 
    \subfigure{\includegraphics[width=0.5\textwidth]{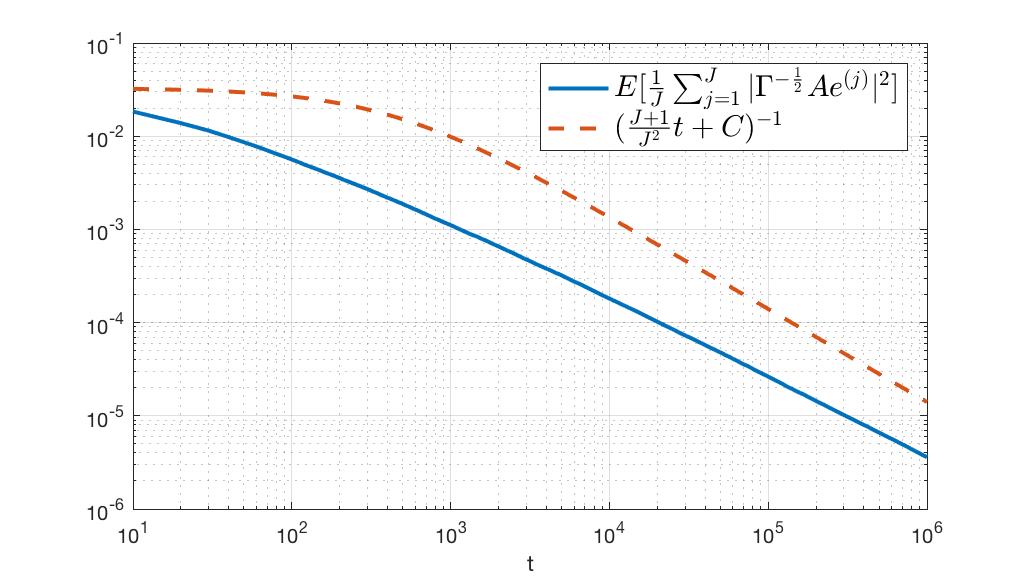}}
    \caption{$\hat{E}(\frac1J\sum\limits_{j=1}^J|\mathfrak e^{(j)}(t)|^2)$ with w.r. of time. $Q=1000$ paths with $J=5$ (left) and $J=15$ (right) particles has been simulated.}\label{Ae}
\end{figure}

Figure \ref{Ae} shows that the Monte Carlo approximation of the expected value $\hat{E}[\frac1J\sum\limits_{j=1}^J|\mathfrak e_t^{(j)}|^2]$ is bounded from above by $(\frac{J+1}{J^2}t+C)^{-1}$ with $C=(\hat{E}[\frac1J\sum\limits_{j=1}^J|\mathfrak e_0^{(j)}|^2])^{-1}$, as derived in Theorem \ref{Thm_main2}.

\begin{figure}[!htb]
    \subfigure{\includegraphics[width=0.5\textwidth]{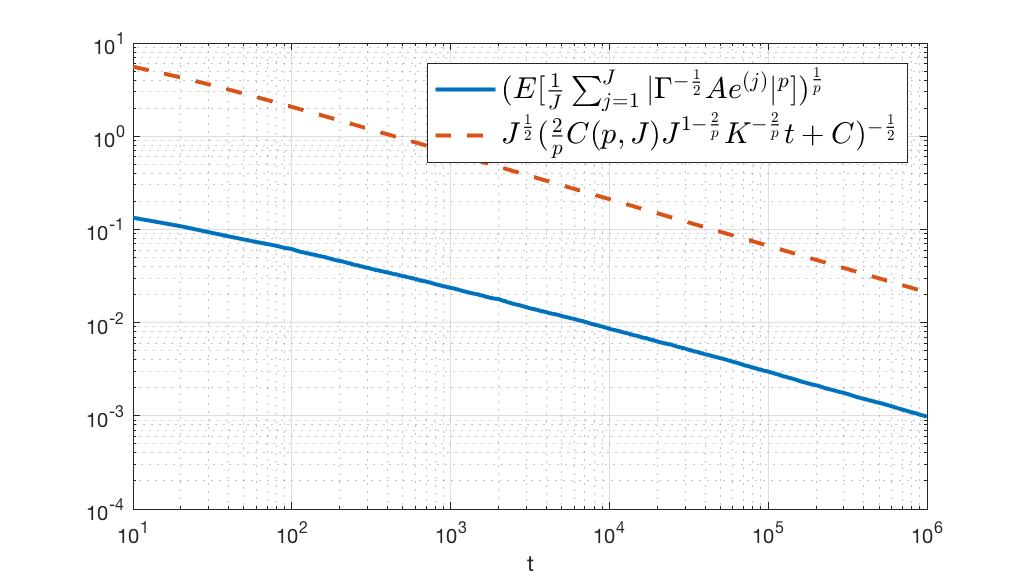}} 
    \subfigure{\includegraphics[width=0.5\textwidth]{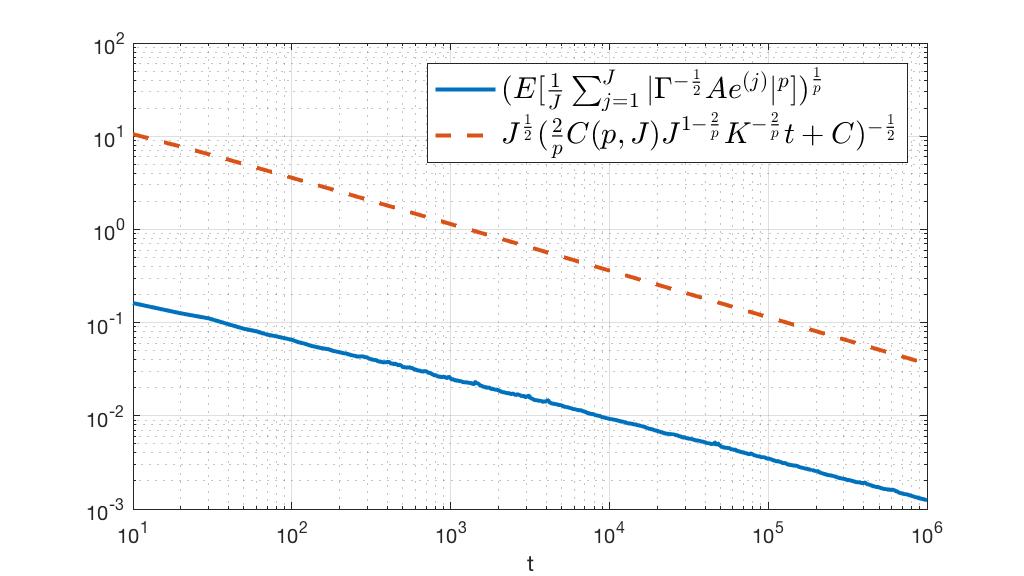}}
    \caption{$\hat{E}(\frac1J\sum\limits_{j=1}^J|\mathfrak e^{(j)}(t)|^{p})^{-\frac1{p}}$, $p=\lfloor \frac{J+3}2\rfloor-1$, w.r. of time. $Q=1000$ paths with $J=5$ (left) and $J=15$ (right) particles has been simulated.}\label{Aep}
\end{figure} 

Similarly Figure \ref{Aep} demonstrates that the approximated higher moments $\hat E[\frac1J\sum\limits_{j=1}^J|\mathfrak e_t^{(j)}|^{p}]^{-\frac1{p}}$ are bounded by $J^{\frac12}(\frac2pC(p,J)J^{1-\frac2p}K^{-\frac2p}t+C)^{-\frac12}$ with $C =(K^{\frac{p-1}2}\hat E[\frac1J\sum\limits_{j=1}^J|\mathfrak e_0^{(j)}|^p])^{\frac2p}$, compare Theorem \ref{Thm_main3}.

In order to verify the almost sure ensemble collapse numerically, we have simulated $Q=10$ paths.
\begin{figure}[!htb]
    \subfigure{\includegraphics[width=0.5\textwidth]{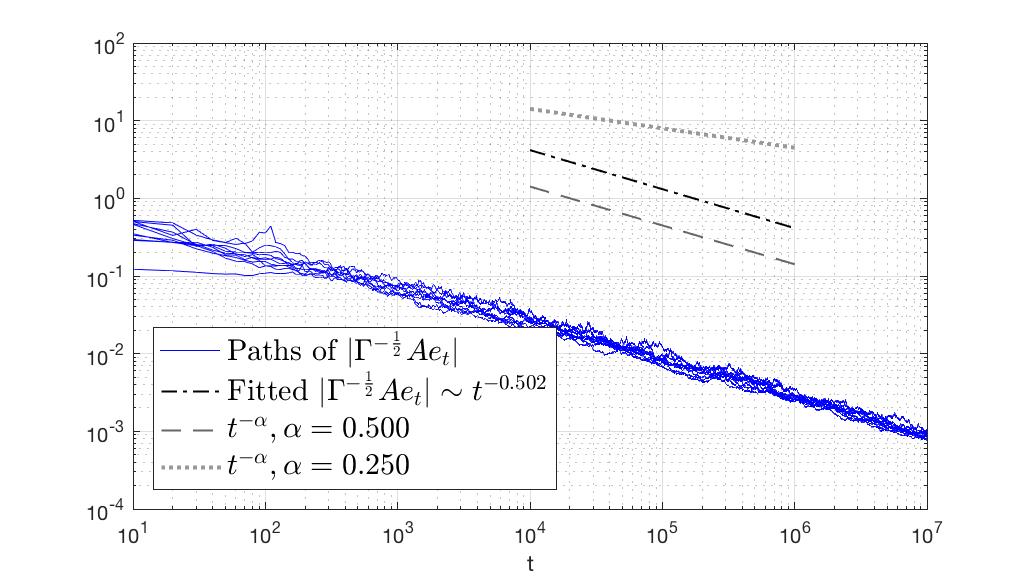}} 
    \subfigure{\includegraphics[width=0.5\textwidth]{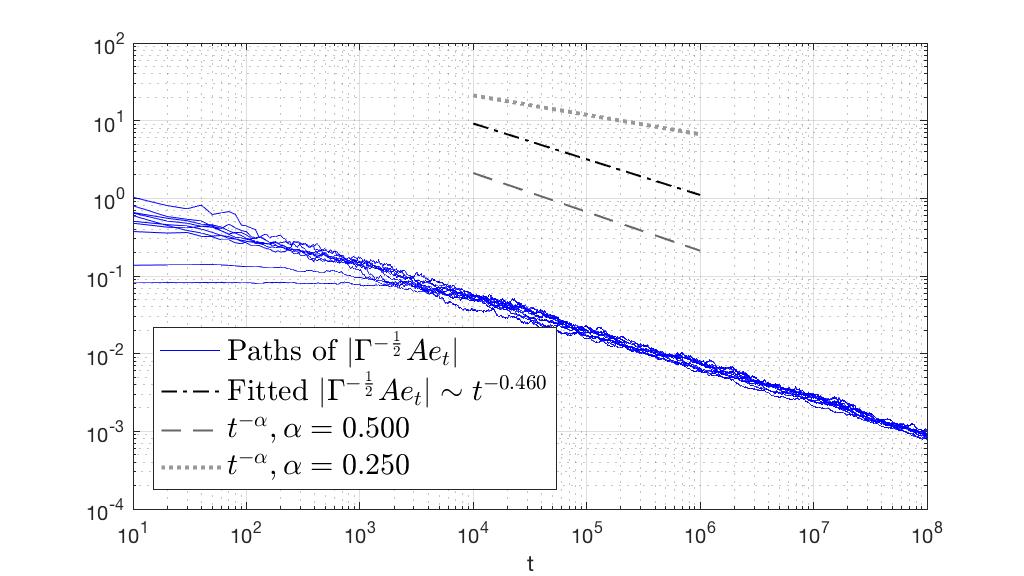}}
    \caption{Paths of $|\mathfrak e(t)|_2$ w.r. of time. Q=10 paths with $J=5$ (left) and $J=15$ (right) particles has been simulated.}\label{paths}
\end{figure} 

From Theorem \ref{Thm_main4} we know, that $\mathfrak e(t)$ converges almost surely to zero with rate function $\rho(t) = t^{-\frac \alpha2}$ for every $\alpha\in(0,1)$. Figure \ref{paths} illustrates this behavior, the expected convergence rates can be observed in this example.

\paragraph{Convergence to ground truth}
We compare simulations of the ensemble Kalman inversion without variance inflation with simulations of the ensemble Kalman inversion with variance inflation. The variance inflation is used in the following setting: We set $\alpha\in\{\frac12, \frac34\}$ and $R=1$ in equation \eref{u_VInf}. 
The number of particles is $J=15$, 
i.e.\ the forward response operator is bijective as a mapping from the subspace spanned by the initial ensemble to the data space.

\begin{figure}[!htb]
    \subfigure{\includegraphics[width=0.49\textwidth]{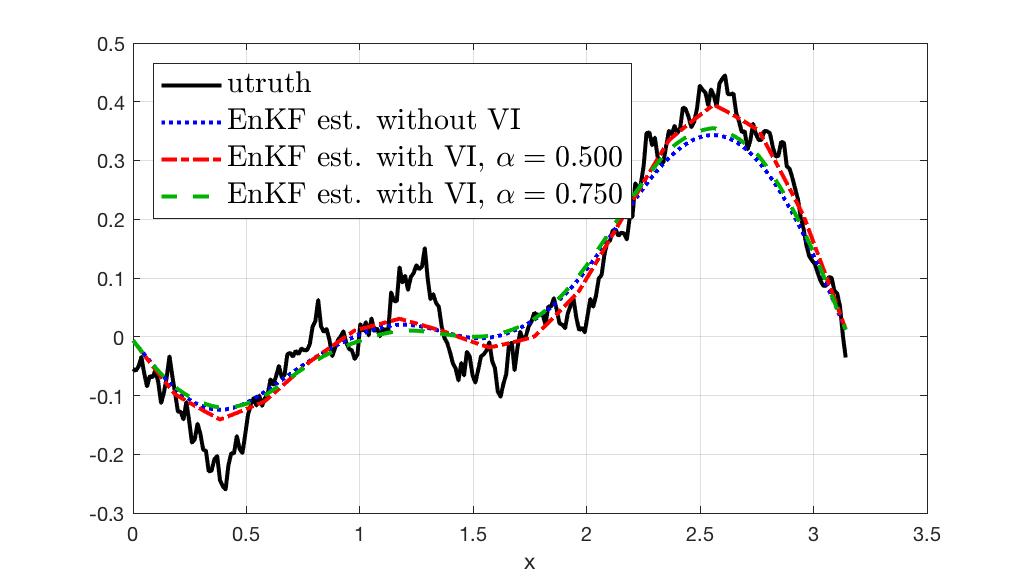}} 
    \subfigure{\includegraphics[width=0.49\textwidth]{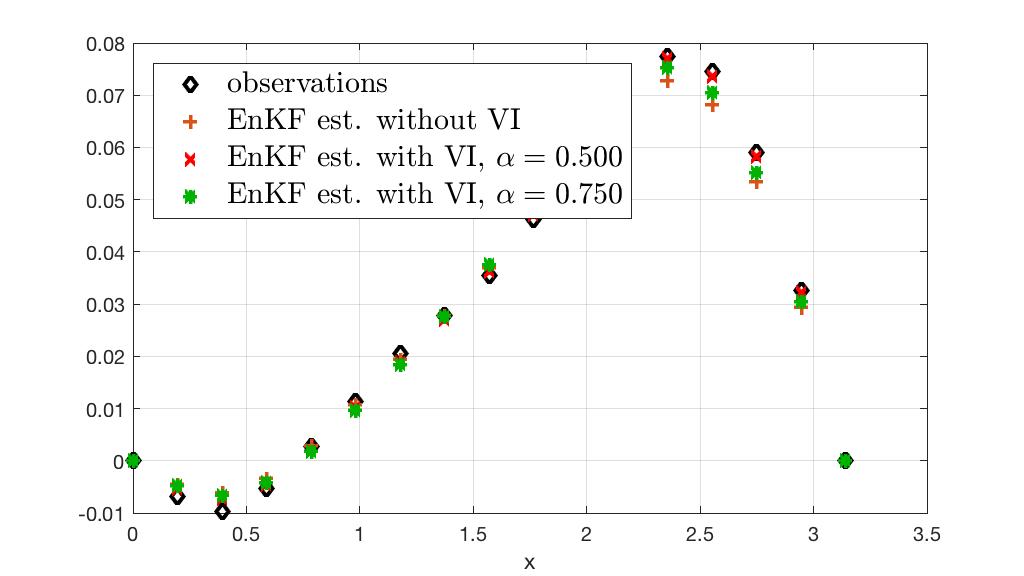}}
    \caption{EnKF estimation without VI vs. EnKF estimation with VI. J=15 particles and Q=1000 paths has been simulated.}\label{EnKF_VI}
\end{figure} 

Figure \ref{EnKF_VI} shows the differences of the EnKF estimation in the parameter space as well as in the observation space. 
We observe that the simulations with variance inflation giving a better estimation in the observation space as well as in the parameter space. 
If we reduce the variance inflation in time faster, i.e.\ we increase the parameter $\alpha$ from $\frac12$ to $\frac34$, the effect of the variance inflation decreases. The following figures demonstrate the effect on the ensemble collapse and the residuals.

\begin{figure}[!htb]
    \subfigure{\includegraphics[width=0.49\textwidth]{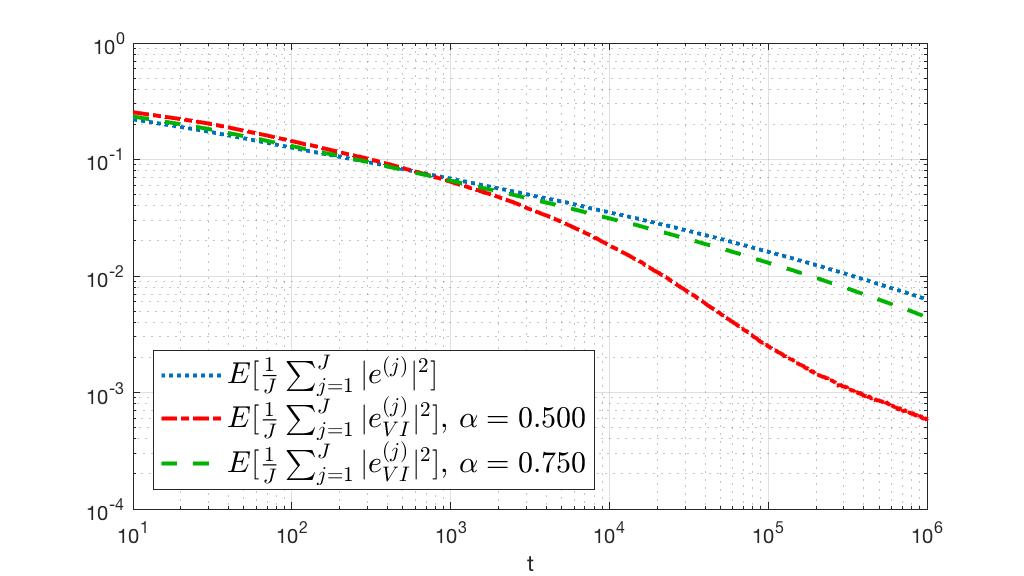}} 
    \subfigure{\includegraphics[width=0.49\textwidth]{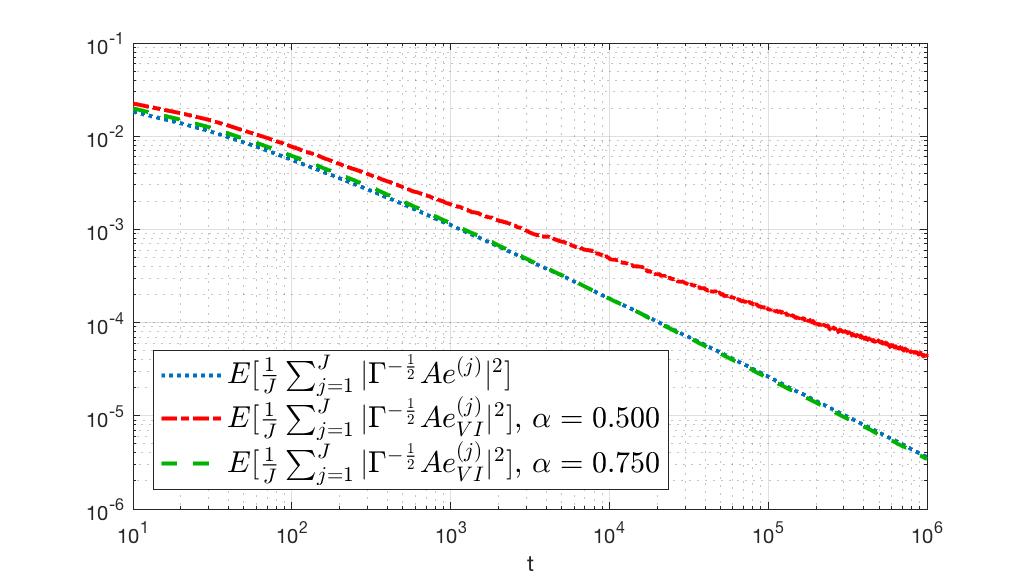}}
    \caption{Comparison of the spread of the ensemble w.r. to time with VI and without VI.}\label{en_col_VI}
\end{figure}

The idea of the variance inflation was to slow down the convergence of the particles to the ensemble mean, i.e.\ to control the rate of the ensemble collapse, in order to ensure the convergence of the residuals in the observation space. Figure \ref{en_col_VI} illustrates that we can ensure a higher spread of the ensemble in the simulations with variance inflation in comparison to the simulations without variance inflation in the observation space.

\begin{figure}[!htb]
    \subfigure{\includegraphics[width=0.49\textwidth]{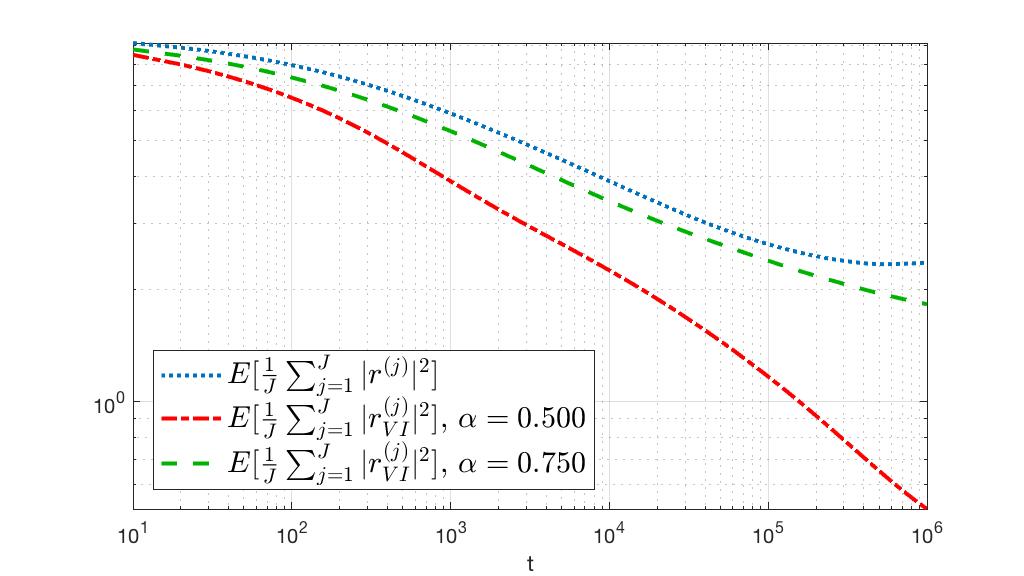}} 
    \subfigure{\includegraphics[width=0.49\textwidth]{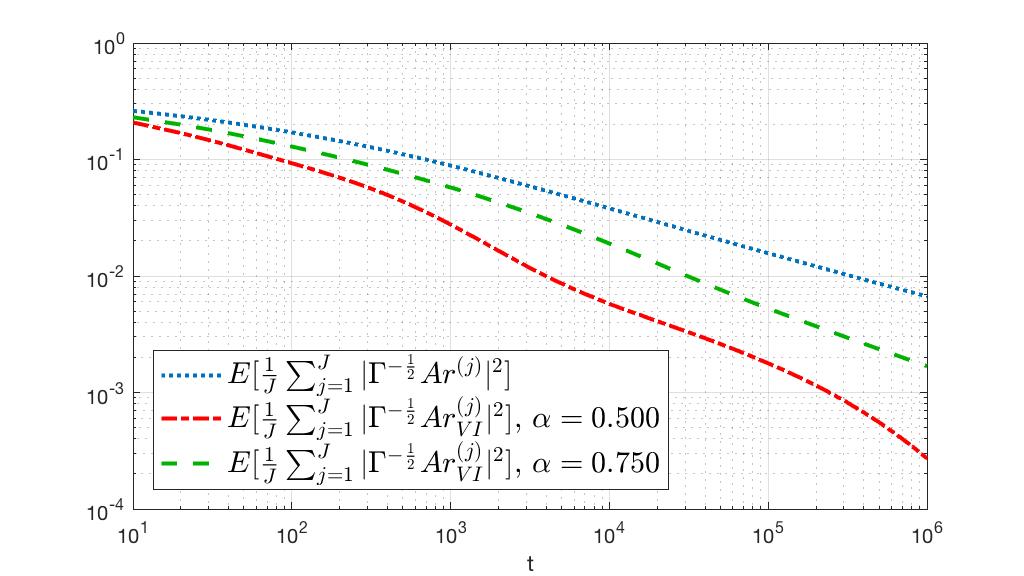}}
    \caption{Comparison of the residuals w.r. to time with VI and without VI.}\label{residuals_VI}
\end{figure} 

Figure \ref{residuals_VI} points out that we end up with convergence of the residuals in the observation and parameter space in case of variance inflation. Without variance inflation the simulations show a slight increase of the residuals in the parameter space, suggesting that the convergence of the residuals will slow down in the observation space as well.

\sw{

To emphasize this result, we reduce the dimension of the example and we set $h=2^4$ with $K=3$ equispaced observation points. Furthermore, we set again $R=1$ and $\alpha=\frac12$ and we use $J=3$ particles, such that the forward response operator is again bijective as mapping from the subspace spanned by the initial ensemble to the observation space.

\begin{figure}[!htb]
    \subfigure{\includegraphics[width=0.49\textwidth]{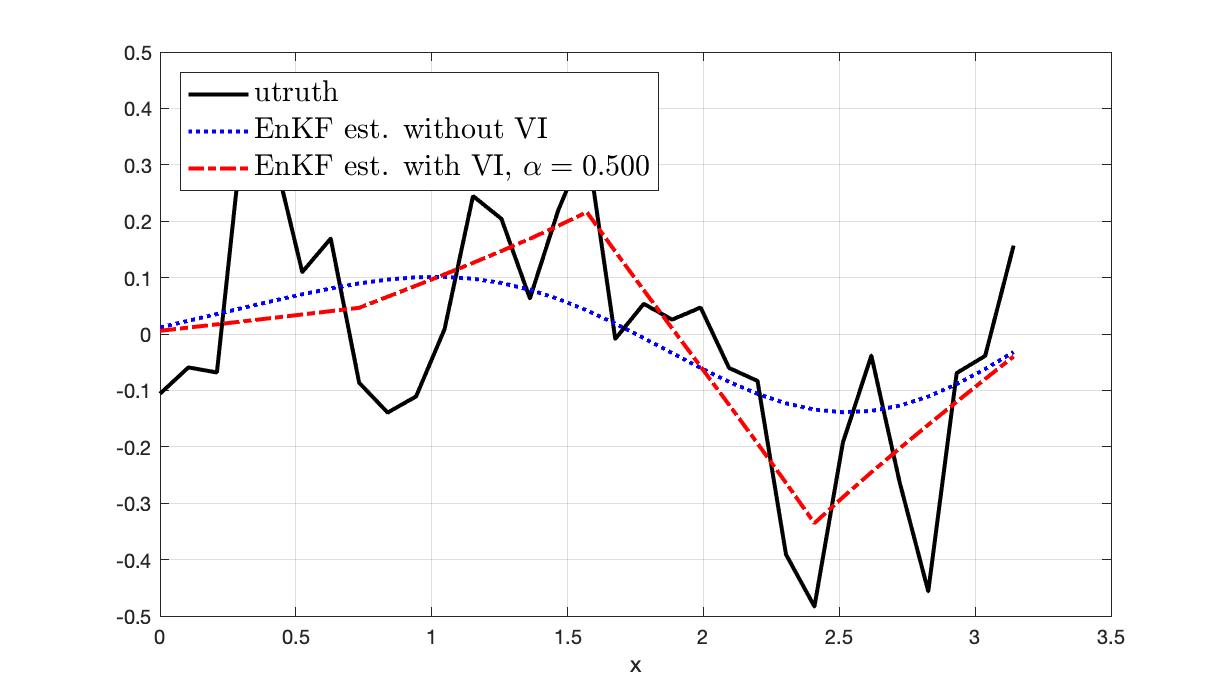}} 
    \subfigure{\includegraphics[width=0.49\textwidth]{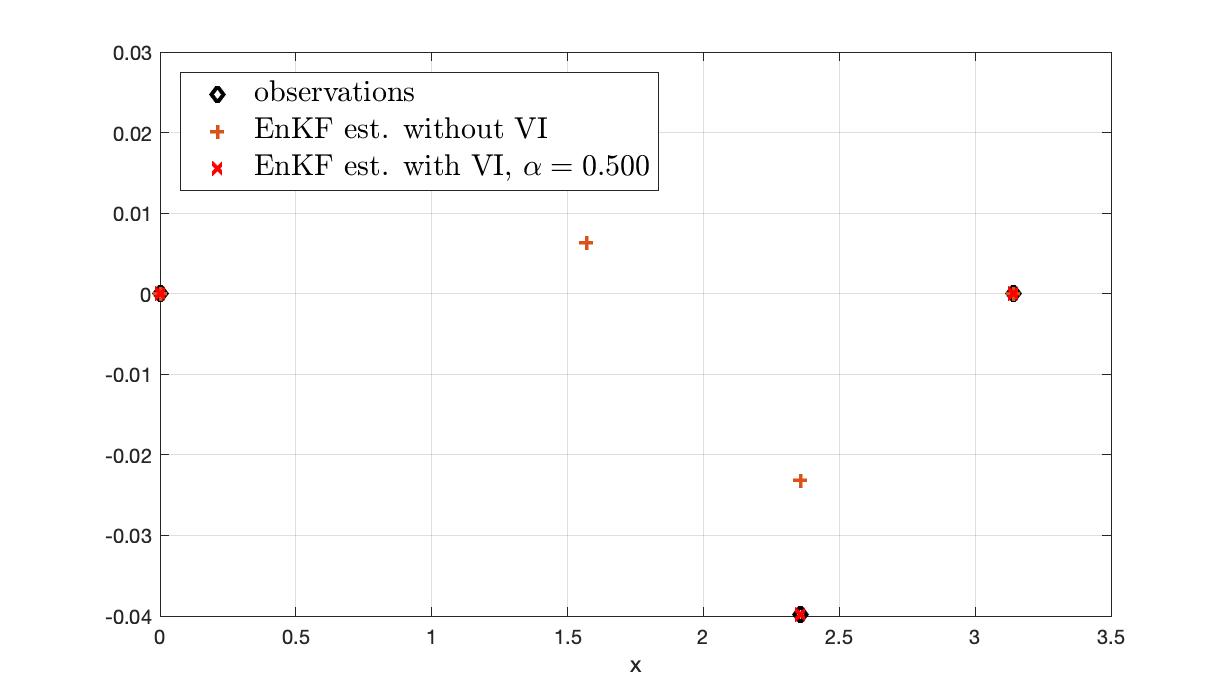}}
    \caption{EnKF estimation without VI vs. EnKF estimation with VI. J=3 particles and Q=10000 paths has been simulated.}\label{EnKF_VI_lowdim}
\end{figure} 

Figure \ref{EnKF_VI_lowdim} shows again the difference of the EnKF estimation with and without variance inflation

\begin{figure}[!htb]
    \subfigure{\includegraphics[width=0.49\textwidth]{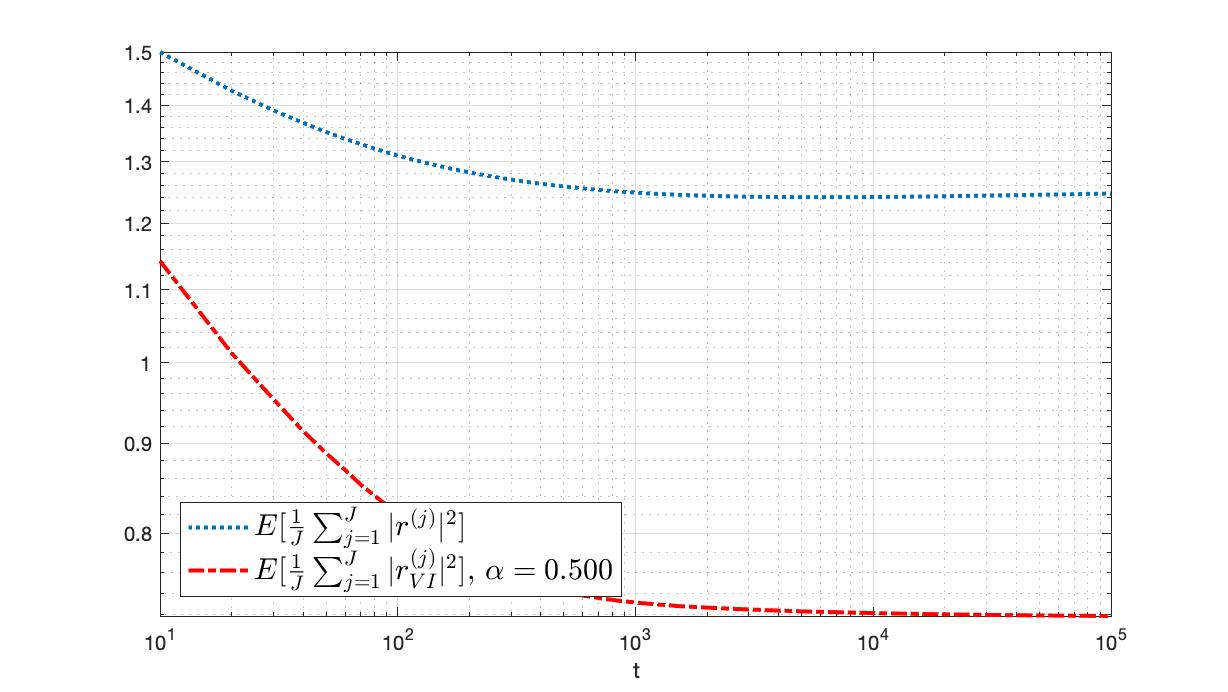}} 
    \subfigure{\includegraphics[width=0.49\textwidth]{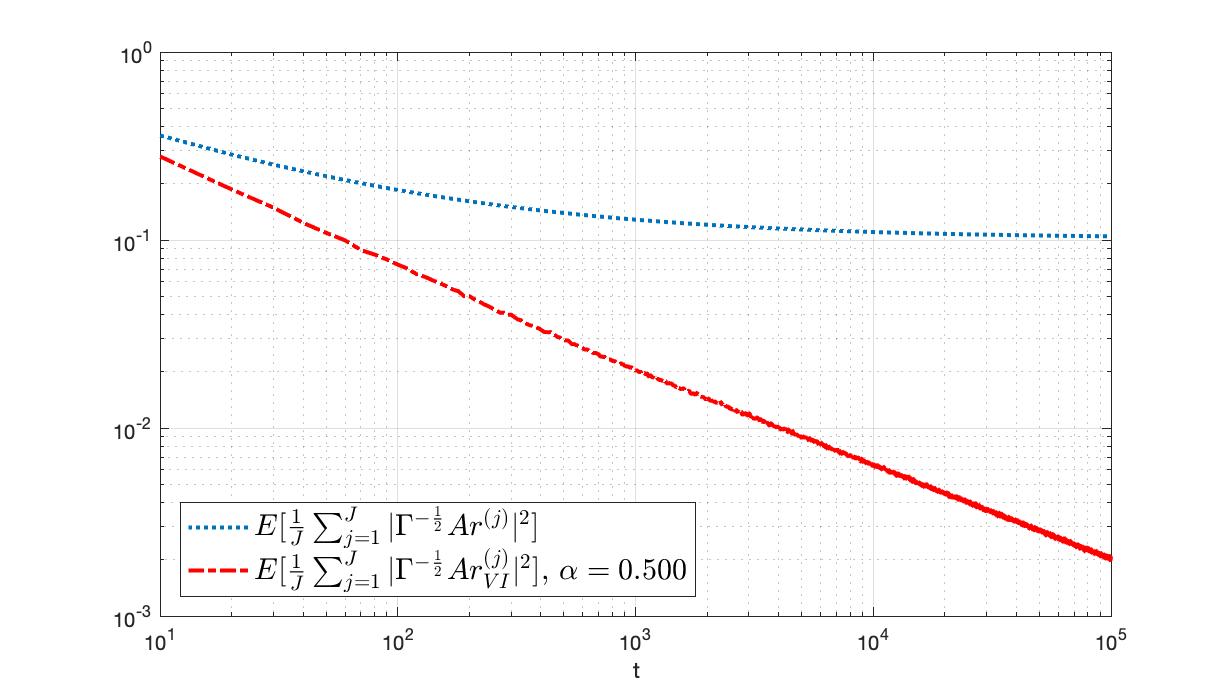}}
    \caption{Comparison of the residuals w.r. to time with VI and without VI.}\label{residuals_VI_lowdim}
\end{figure} 

\begin{figure}[!htb]
    \subfigure{\includegraphics[width=0.49\textwidth]{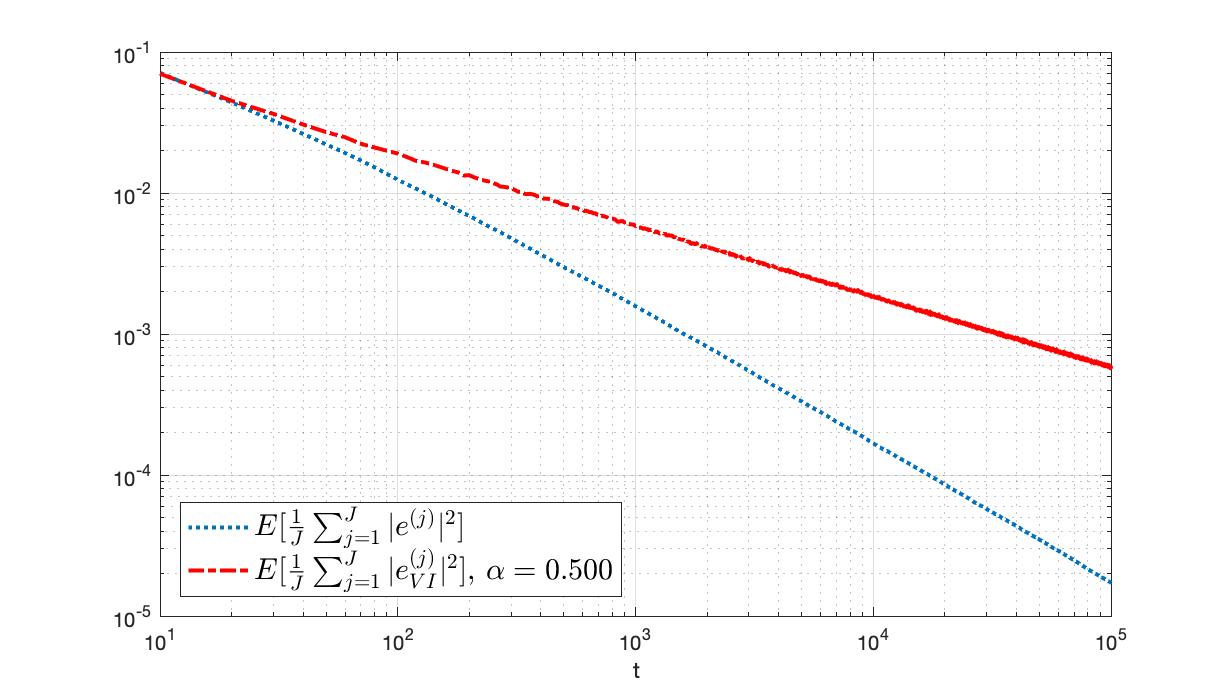}} 
    \subfigure{\includegraphics[width=0.49\textwidth]{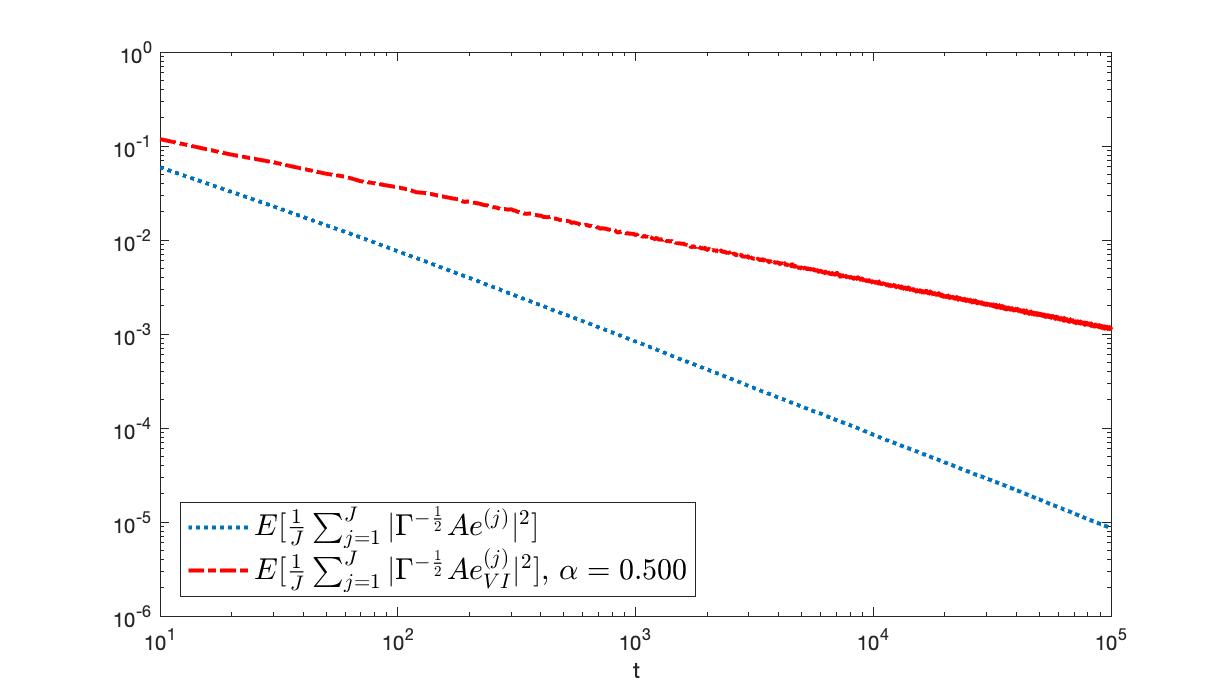}}
    \caption{Comparison of the ensemble spread w.r. to time with VI and without VI.}\label{spread_VI_lowdim}
\end{figure} 

Figure \ref{residuals_VI_lowdim} points out the effect of the variance inflation. While the residuals in the observation space without variance inflation diverge, \cls{we obtain convergence of the residuals in the observation space using variance inflation}. In addition, in Figure \ref{spread_VI_lowdim} we can see that the ensemble of particles still collapse in the \cls{parameter space} as well as in the observation space.

}

\section{Conclusions}\label{sec:concl}
Our analysis of the ensemble Kalman inversion shows the well-posedness and accuracy of the method in the case of linear forward operators. The results are based on the continuous time limit of the algorithm consisting of a coupled system of stochastic differential equations. Due to the subspace property of the ensemble Kalman inversion, the theory of finite-dimensional stochastic differential equations could be applied to establish existence and uniqueness of solutions, 
i.e.\ to show the well-posedness of the method. The ensemble collapse has been quantified in terms of moments as well as almost sure convergence of the particles to the empirical mean. Furthermore, we suggest a time-adaptive variance inflation to stabilize the convergence of the empirical mean to the truth in the noise free case. The inflation can be interpreted as model error delaying the ensemble collapse. The presented numerical experiments confirm the theoretical results and indicate that the ensemble collapse can be bounded from below for the original iteration scheme without variance inflation. However, the rate seems to be too small to achieve convergence. This will be subject to future work. \cls{In addition, the next steps include the generalization of the presented results to case of noisy observations in the inverse problem and the development of appropriate stopping criteria in the noisy case.} Even though the presented analysis relies on the linearity of the forward operator, the statements hold true for non-Gaussian priors and can guide the analysis of the nonlinear setting.

\noindent{\bf Acknowledgments} ClS would like to thank the Isaac Newton Institute for Mathematical Sciences for support and hospitality during the programme Uncertainty quantification for complex systems: theory and methodologies when work on this paper was undertaken. This work was supported by: EPSRC grant numbers EP/K032208/1 and EP/R014604/1". SW is grateful to the DFG RTG1953 "Statistical Modeling of Complex Systems and Processes" for funding of this research. 
The authors acknowledge support by the state of Baden-W\"urttemberg through bwHPC.\\[0.55cm]

\appendix

\section{Auxiliary results}

\label{sec:app}
{In order to use It\^o's formula we have to calculate the following quadratic covariation in many cases:}
\begin{lemma}\label{BM_innerproduct}
Let $(W^{(j)})_{j=1,\dots,J}$ be independent Brownian motions in $\R^K${, $u,v\in\R^K$} and let $l\neq j\in\{1,\dots,J\}$. {Then with $\bar W = \frac{1}{J}\sum_{k=1}^JW^{(k)}$,}
\begin{equation*}
\langle u,d(W^{(j)}-\overline{W})\rangle\langle v,d(W^{(j)}-\overline{W})\rangle = \frac{J-1}{J}\langle u,v\rangle\,dt,
\end{equation*}
\begin{equation*}
\langle u,d(W^{(j)}-\overline{W})\rangle\langle v,d(W^{(l)}-\overline{W})\rangle = -\frac{1}{J}\langle u,v\rangle\,dt.
\end{equation*}
\end{lemma}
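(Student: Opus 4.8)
The plan is to reduce both identities to a single componentwise It\^o covariation computation for the centered increments and then contract against the fixed vectors $u,v$. First I would record the It\^o multiplication table for the independent $\R^K$-valued Brownian motions: writing $W^{(j)}=(W^{(j)}_1,\dots,W^{(j)}_K)$, independence together with Brownian scaling gives $dW^{(p)}_a\,dW^{(q)}_b=\delta_{pq}\delta_{ab}\,dt$ for all $p,q\in\{1,\dots,J\}$ and $a,b\in\{1,\dots,K\}$, while every product involving $dt$ is negligible.

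Next I would compute the covariation of two centered increments one component at a time. Substituting $d(W^{(j)}-\overline W)_a=dW^{(j)}_a-\frac1J\sum_{m=1}^J dW^{(m)}_a$ and expanding the product $d(W^{(j)}-\overline W)_a\,d(W^{(l)}-\overline W)_b$ produces four groups of terms. Applying the multiplication table termwise, the bare product $dW^{(j)}_a\,dW^{(l)}_b$ contributes $\delta_{jl}\delta_{ab}\,dt$, each of the two single cross sums contributes $-\frac1J\delta_{ab}\,dt$, and the double sum $\frac1{J^2}\sum_{m,n}dW^{(m)}_a\,dW^{(n)}_b$ collapses to $+\frac1J\delta_{ab}\,dt$, since only the $J$ diagonal terms with $m=n$ survive. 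Collecting these four contributions gives the compact formula
\begin{equation*}
d(W^{(j)}-\overline W)_a\,d(W^{(l)}-\overline W)_b=\left(\delta_{jl}-\frac1J\right)\delta_{ab}\,dt .
\end{equation*}

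Finally I would contract with $u$ and $v$. Since $\langle u,d(W^{(j)}-\overline W)\rangle\langle v,d(W^{(l)}-\overline W)\rangle=\sum_{a,b}u_av_b\,d(W^{(j)}-\overline W)_a\,d(W^{(l)}-\overline W)_b$, the factor $\delta_{ab}$ forces $a=b$ and leaves $\sum_a u_a v_a=\langle u,v\rangle$, so the whole expression equals $\left(\delta_{jl}-\frac1J\right)\langle u,v\rangle\,dt$. Specializing to $j=l$ yields the first identity with coefficient $\frac{J-1}{J}$, and taking $l\neq j$ yields the second with coefficient $-\frac1J$. I do not expect any genuine obstacle here; the only step demanding care is the bookkeeping of the cross terms, in particular noticing that the double sum keeps only its $J$ diagonal entries, so that the three correction terms combine to exactly $-\frac1J$ in the off-diagonal case.
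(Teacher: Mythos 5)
Your proof is correct and follows essentially the same route as the paper: both expand the centered increment $W^{(j)}-\overline W$ in terms of the independent Brownian motions and apply the It\^o multiplication table, with your componentwise bookkeeping via the unified formula $\bigl(\delta_{jl}-\frac1J\bigr)\delta_{ab}\,dt$ being only a cosmetic repackaging of the paper's case-by-case computation of the two covariations. All coefficients check out, so nothing further is needed.
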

\begin{proof}
Observe
\begin{equation*}
W^{(j)}-\overline W = -\frac1J\sum\limits_{k=1, k\neq j}^JW^{(k)}+\frac{J-1}JW^{(j)}
\end{equation*}
Since $W^{(k)}$ are independent Brownian motions it follows
\begin{eqnarray*}
\langle u,d(W^{(j)}-\overline{W})\rangle\langle v,d(W^{(j)}-\overline{W})\rangle &= \frac1{J^2}\sum\limits_{k=1, k\neq j}^J\langle u,\d W^{(k)}\rangle\langle v,\d W^{(k)}\rangle\\
				&\quad+\frac{(J-1)^2}{J^2}\langle u,\d W^{(j)}\rangle\langle v,\d W^{(j)}\rangle\\
				&=\frac{J-1}J\langle u,v\rangle\,\d t
\end{eqnarray*}
Similarly,
\begin{eqnarray*}
\fl \langle u,d(W^{(j)}-\overline{W})\rangle\langle v,d(W^{(l)}-\overline{W})\rangle &=-\frac1J\sum\limits_{k=1}^J(\langle u,\d W^{(j)}\rangle\langle v,\d W^{(k)}\rangle + \langle u,\d W^{(k)}\rangle\langle v,\d W^{(l)}\rangle)\\
		&\quad+\frac1{J^2}\sum\limits_{i, k=1}^J\langle u,\d W^{(i)}\rangle\langle v,\d W^{(k)}\rangle\\
		&=-\frac1J\langle u,v\rangle\,\d t
\end{eqnarray*}
\end{proof}

\begin{lemma}\label{nonneg}
Let $M$ be a symmetric and nonnegative $d\times d$-matrix, then for all choices of vectors $(z^{(k)})_{k=1,\dots,J}$ in $\R^n$ we have 
\begin{equation*}
\sum\limits_{k, l=1}^J\langle z^{(k)},z^{(l)}\rangle\langle z^{(k)},Mz^{(l)}\rangle\ge0.
\end{equation*}
\end{lemma}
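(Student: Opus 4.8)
The plan is to exploit the factorization of the symmetric nonnegative matrix $M$ through its symmetric square root and then to recognize the double sum as a genuine squared norm. Writing $M = M^{1/2}M^{1/2}$ with $M^{1/2}$ the symmetric nonnegative square root, I would introduce $w^{(k)} := M^{1/2}z^{(k)}$, so that
$\langle z^{(k)}, Mz^{(l)}\rangle = \langle M^{1/2}z^{(k)}, M^{1/2}z^{(l)}\rangle = \langle w^{(k)}, w^{(l)}\rangle$.
Each summand then factorizes into a product of two inner products, namely $\langle z^{(k)}, z^{(l)}\rangle\langle w^{(k)}, w^{(l)}\rangle$, which is the quantity I want to show assembles into something manifestly nonnegative.

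The key step is to observe that this product is exactly the Frobenius (Hilbert--Schmidt) inner product $\langle A,B\rangle_F := \trace(A^\top B)$ of the rank-one operators $z^{(k)}\otimes w^{(k)}$ and $z^{(l)}\otimes w^{(l)}$. Indeed, for the tensor product defined in the paper one checks directly that $\langle z_1\otimes z_2,\, z_3\otimes z_4\rangle_F = \langle z_1, z_3\rangle\langle z_2, z_4\rangle$. Setting $T := \sum_{k=1}^J z^{(k)}\otimes w^{(k)}$ and using bilinearity of $\langle\cdot,\cdot\rangle_F$, I obtain
\begin{equation*}
\sum_{k,l=1}^J \langle z^{(k)}, z^{(l)}\rangle\langle z^{(k)}, Mz^{(l)}\rangle
= \sum_{k,l=1}^J \langle z^{(k)}\otimes w^{(k)},\, z^{(l)}\otimes w^{(l)}\rangle_F
= \|T\|_F^2 \ge 0,
\end{equation*}
which is the claim.

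An equivalent route, which I would mention as a remark, is to write the sum as $\trace(GH)$, where $G = (\langle z^{(k)}, z^{(l)}\rangle)_{k,l}$ and $H = (\langle w^{(k)}, w^{(l)}\rangle)_{k,l}$ are the Gram matrices of the families $(z^{(k)})_k$ and $(w^{(k)})_k$; both are symmetric positive semidefinite, and $\trace(GH) = \trace(G^{1/2}HG^{1/2}) \ge 0$ since $G^{1/2}HG^{1/2}$ is positive semidefinite. This is essentially the Schur product theorem applied to $G$ and $H$. I do not expect a genuine obstacle here: the only ideas required are the factorization of $M$ via $M^{1/2}$ and the recognition of the resulting double sum as a squared Frobenius norm (equivalently, as the trace of a product of two positive semidefinite matrices). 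Everything else is routine bilinearity, and no estimate or limiting argument is needed.
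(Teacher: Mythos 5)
Your proof is correct, and it is worth noting that it produces the paper's own computation in coordinate-free disguise. The paper expands each $z^{(l)}$ in an orthonormal eigenbasis $(v^{(m)})$ of $M$ with eigenvalues $\lambda_m\ge 0$ and shows directly that the double sum equals $\sum_{n,m}\lambda_m\bigl(\sum_{k} z_n^{(k)}z_m^{(k)}\bigr)^2\ge 0$; if you evaluate your Frobenius norm $\|T\|_F^2$ with $T=\sum_{k}z^{(k)}\otimes M^{1/2}z^{(k)}$ in that same eigenbasis, the entries are $T_{nm}=\sqrt{\lambda_m}\sum_k z_n^{(k)}z_m^{(k)}$, and you recover exactly the paper's final display — so the two arguments differ only in packaging. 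What your version buys: it isolates the single structural identity $\langle z_1\otimes z_2,\,z_3\otimes z_4\rangle_F=\langle z_1,z_3\rangle\langle z_2,z_4\rangle$, never chooses a basis, and extends verbatim to nonnegative self-adjoint operators on a Hilbert space (with $T$ Hilbert--Schmidt), at the cost of invoking the existence of $M^{1/2}$ — which is itself usually obtained from the spectral decomposition the paper uses explicitly. Your Gram-matrix remark is also sound: the sum is $\trace(GH)$ for the two positive semidefinite Gram matrices, and $\trace(GH)=\trace(G^{1/2}HG^{1/2})\ge0$ by cyclicity; just note that the Schur product theorem proper would instead give positive semidefiniteness of the Hadamard product $G\circ H$, from which the claim follows as the sum of all its entries, $\mathbf{1}^\top(G\circ H)\mathbf{1}\ge0$ — a third valid route, slightly different from the trace identity you wrote. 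Finally, a cosmetic point inherited from the statement itself: the lemma says $M$ is $d\times d$ while the $z^{(k)}$ lie in $\R^n$; both your proof and the paper's implicitly require $M$ to act on the space containing the $z^{(k)}$, so $d=n$.
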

\begin{proof}
Let $(v^{(m)})_{m=1,\dots,d}$ be an orthonormal basis of eigenvectors such that $Mv^{(m)}=\lambda_mv^{(m)}$ with $\lambda_m\ge0$. Then $z^{(l)}=\sum\limits_{m=1}^dz_m^{(l)}v^{(m)}$ and thus
\begin{equation*}
\fl\sum\limits_{k, l=1}^J\langle z^{(k)},z^{(l)}\rangle\langle z^{(k)},Mz^{(l)}\rangle=\sum\limits_{k, l=1}^J\sum\limits_{m, n=1}^dz_n{(k)}z_n^{(l)}z_m^{(k)}z_m^{(l)}\lambda_m = \sum\limits_{n, m=1}^d\lambda_m(\sum\limits_{k=1}^Jz_n^{(k)}z_m^{(k)})^2\ge0.
\end{equation*}
\end{proof}

\begin{lemma}\label{Lemma_App}
Let $(x^{(j)})_{j=1,\dots,J}$ be vectors in $\R^n$ and let $C(x)$ denote the sample covariance matrix
\begin{equation*}
C(x)=\frac1J\sum\limits_{k=1}^J(x^{(k)}-\overline{x})\otimes(x^{(k)}-\overline{x}),\qquad \overline x=\frac1J\sum\limits_{j=1}^Jx^{(j)}.
\end{equation*}
Then {it holds true that}
\begin{equation*}
\sum\limits_{j=1}^J\langle x^{(j)}-\overline{x},C(x)(x^{(j)}-\overline{x})\rangle\le\sum\limits_{j=1}^J\langle x^{(j)},C(x)x^{(j)}\rangle
\end{equation*}
\end{lemma}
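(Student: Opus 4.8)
The plan is to expand each vector about the empirical mean and then exploit the symmetry and positive semi-definiteness of the empirical covariance operator. I would write $e^{(j)} := x^{(j)} - \overline{x}$, so that by construction $\sum_{j=1}^J e^{(j)} = 0$ and $C(x) = \frac1J\sum_{k=1}^J e^{(k)}\otimes e^{(k)}$. With this notation the left-hand side of the claimed inequality is exactly $\sum_{j=1}^J\langle e^{(j)}, C(x) e^{(j)}\rangle$, so it suffices to show that replacing each centered vector $e^{(j)}$ by the uncentered $x^{(j)} = e^{(j)} + \overline{x}$ in the quadratic form can only increase the total.

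First I would substitute $x^{(j)} = e^{(j)} + \overline{x}$ into the right-hand side and expand. Since $C(x)$ is a sum of the symmetric rank-one operators $e^{(k)}\otimes e^{(k)}$, it is itself symmetric, so the two off-diagonal contributions combine into a single cross term and one obtains
\[
\sum_{j=1}^J\langle x^{(j)}, C(x) x^{(j)}\rangle = \sum_{j=1}^J\langle e^{(j)}, C(x) e^{(j)}\rangle + 2\Big\langle \sum_{j=1}^J e^{(j)}, C(x)\overline{x}\Big\rangle + J\langle \overline{x}, C(x)\overline{x}\rangle.
\]
The decisive step is that the middle term vanishes identically, because $\sum_{j=1}^J e^{(j)} = 0$ by definition of the empirical mean. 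For the last term, $C(x)$ is positive semi-definite, being a nonnegative combination of the rank-one projections $e^{(k)}\otimes e^{(k)}$, and hence $J\langle \overline{x}, C(x)\overline{x}\rangle \ge 0$.

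Combining these two observations then gives directly
\[
\sum_{j=1}^J\langle x^{(j)}, C(x) x^{(j)}\rangle = \sum_{j=1}^J\langle e^{(j)}, C(x) e^{(j)}\rangle + J\langle \overline{x}, C(x)\overline{x}\rangle \ge \sum_{j=1}^J\langle x^{(j)}-\overline{x}, C(x)(x^{(j)}-\overline{x})\rangle,
\]
which is the assertion. I do not expect a genuine obstacle here: the whole argument rests on the vanishing of the cross term, a consequence of centering, together with the nonnegativity of $C(x)$, both of which are elementary. The only point requiring any care is to note at the outset that $C(x)$ is symmetric, so that the expansion of the quadratic form produces one combined cross term (rather than two separate ones) which can then be recognized as a multiple of $\sum_j e^{(j)} = 0$.
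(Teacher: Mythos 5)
Your proof is correct and coincides with the paper's own argument: both expand the quadratic form about the empirical mean, use $\sum_{j=1}^J (x^{(j)}-\overline{x}) = 0$ to kill the cross term, arrive at the identity $\sum_{j=1}^J\langle x^{(j)},C(x)x^{(j)}\rangle = \sum_{j=1}^J\langle x^{(j)}-\overline{x},C(x)(x^{(j)}-\overline{x})\rangle + J\langle\overline{x},C(x)\overline{x}\rangle$, and conclude from the positive semi-definiteness of $C(x)$. The only difference is that you spell out the expansion that the paper states in one line, so nothing further is needed.
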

\begin{proof}
By expanding the non-centered quadratic form we obtain
\begin{eqnarray*}
\fl\sum\limits_{j=1}^J\langle x^{(j)}-\overline{x},C(x)(x^{(j)}-\overline{x})\rangle
&=&\sum\limits_{j=1}^J\langle x^{(j)},C(x)x^{(j)}\rangle - J\langle\overline{x},C(x)\overline{x}\rangle
\,,
\end{eqnarray*}
which yields the claim by the non-negativity of the covariance matrix.
\end{proof}

\begin{lemma}\label{martingale}
For all $j\in\{1,\dots,J\}$ the process 
\begin{equation*}
(M(t))_{t\ge0}:=\Big(\int_0^t\mathfrak e_s^{(j)T}C(\mathfrak e_s)\,\d  W_s^{(j)}\Big)_{t\ge0}
\end{equation*} 
is a (global) martingale.
\end{lemma}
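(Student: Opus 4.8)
The plan is to invoke the standard criterion that a continuous local martingale $M$ whose quadratic variation has finite expectation on each bounded interval, i.e.\ $\E[\langle M\rangle_T]<\infty$ for all $T\ge0$, is automatically a true (square-integrable) martingale. Since $M(t)=\int_0^t\mathfrak e_s^{(j)T}C(\mathfrak e_s)\,\d W_s^{(j)}$ is a stochastic integral against the $\R^K$-valued Brownian motion $W^{(j)}$, it is a priori a continuous local martingale, so it suffices to control its quadratic variation.

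First I would compute the quadratic variation: using that $W^{(j)}$ is standard and $C(\mathfrak e_s)$ is symmetric,
\begin{equation*}
\langle M\rangle_t=\int_0^t\bigl|\mathfrak e_s^{(j)T}C(\mathfrak e_s)\bigr|^2\,\d s=\int_0^t\mathfrak e_s^{(j)T}C(\mathfrak e_s)^2\mathfrak e_s^{(j)}\,\d s.
\end{equation*}
Next I would bound the integrand pointwise. Since $C(\mathfrak e_s)$ is symmetric and nonnegative, $C(\mathfrak e_s)^2\preceq\trace(C(\mathfrak e_s))\,C(\mathfrak e_s)\preceq\trace(C(\mathfrak e_s))^2\,\mathrm{Id}$, with $\trace(C(\mathfrak e_s))=\frac1J\sum_{k=1}^J|\mathfrak e_s^{(k)}|^2$. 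Using in addition $|\mathfrak e_s^{(j)}|^2\le\sum_k|\mathfrak e_s^{(k)}|^2=J\trace(C(\mathfrak e_s))$ and the power-mean inequality $\bigl(\frac1J\sum_k|\mathfrak e_s^{(k)}|^2\bigr)^3\le\frac1J\sum_k|\mathfrak e_s^{(k)}|^6$, one obtains
\begin{equation*}
\mathfrak e_s^{(j)T}C(\mathfrak e_s)^2\mathfrak e_s^{(j)}\le J\,\frac1J\sum_{k=1}^J|\mathfrak e_s^{(k)}|^6.
\end{equation*}

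The final step is to take expectations and integrate, reducing the claim to the finiteness of $\int_0^t\E[\frac1J\sum_k|\mathfrak e_s^{(k)}|^6]\,\d s$. This is precisely the second assertion of Lemma \ref{Monotonie} applied with $p=4$, which is admissible because $p=4<J+3$ holds for every $J\ge2$; it delivers the required integral bound, hence $\E[\langle M\rangle_t]<\infty$ for all $t$, and the martingale property follows.

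The main obstacle is the moment bookkeeping. The sixth-order integrability above follows from Lemma \ref{Monotonie} only if the initial ensemble has finite fourth moment $\E[\frac1J\sum_k|\mathfrak e_0^{(k)}|^4]<\infty$, whereas the results that invoke this lemma (e.g.\ Theorem \ref{Thm_main2}) assume only a finite second moment $C_0$. I would reconcile this by first establishing the martingale property---and thereby the identity obtained by taking expectations in \eref{eq:IntSum_e}---under the stronger fourth-moment hypothesis, and then removing it by approximating a general $\cF_0$-measurable initial ensemble by truncated data having all moments finite; since the quantitative collapse bound \eref{eq:enscoll} depends on the initial data only through $C_0$, it is stable under this passage to the limit. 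The genuinely delicate point is that the extra factor $\trace(C(\mathfrak e_s))$ in the quadratic-variation estimate is not pathwise bounded, so one really does need the a priori time-integrated control of a higher moment supplied by Lemma \ref{Monotonie}, rather than the merely quadratic control available from the supermartingale structure of $\frac1J\sum_k|\mathfrak e_s^{(k)}|^2$.
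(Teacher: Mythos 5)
Your argument is essentially the paper's proof: both reduce the claim to the It\^o-isometry criterion $\E\int_0^T\|\mathfrak e_s^{(j)T}C(\mathfrak e_s)\|^2\,\d s<\infty$, bound the integrand by $\bigl(\frac1J\sum_k|\mathfrak e_s^{(k)}|^2\bigr)^3\le\frac1J\sum_k|\mathfrak e_s^{(k)}|^6$, and invoke the second assertion of Lemma \ref{Monotonie} with $p=4$ (valid since $4<J+3$ for all $J\ge2$); that you estimate $C(\mathfrak e_s)^2\le\trace(C(\mathfrak e_s))^2\,\mathrm{Id}$ in the quadratic-form sense where the paper expands the Frobenius norm and applies Cauchy--Schwarz termwise is an immaterial difference. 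Your closing observation is also correct and sharper than the paper: the proof tacitly requires $\E[\frac1J\sum_k|\mathfrak e_0^{(k)}|^4]<\infty$, a hypothesis stated neither in Lemma \ref{martingale} nor in Theorem \ref{Thm_main2}, so to obtain \eref{eq:enscoll} under second moments alone one indeed needs an extra step such as your truncation argument (or, alternatively, conditioning on $\cF_0$ and using Jensen with the concavity of $c\mapsto\bigl(\frac{J+1}{J^2}t+c^{-1}\bigr)^{-1}$).
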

\begin{proof}
The local martingale given by the stochastic integral is a true martingale by It\^o-isometry
if we show that following second moment is finite 
(cp.\cite[Theorem 2.4]{GL2011})
\begin{equation*}
\fl\|\mathfrak e_\cdot^{(j)T}C(\mathfrak e_{\cdot})\|_{\Lambda_2;T}
:=\E[\int_0^T\|\mathfrak e_s^{(j)T}C(\mathfrak e_s)\|_F^2\,\d s]
=\int_0^T\E[\|\mathfrak e_s^{(j)T}C(\mathfrak e_s)\|_F^2]\,\d s
<\infty
\end{equation*} 
for all $T\ge0$. For this, we first estimate the Frobenius norm by
\begin{eqnarray*}
\fl\|\mathfrak e_s^{(j)T}C(\mathfrak e_s)\|_F^2 := \trace{\mathfrak e^{(j)T}C(\mathfrak e)(\mathfrak e^{(j)T}C(\mathfrak e))^T} &= \frac1{J^2}\sum\limits_{k,l=1}^J\langle \mathfrak e^{(l)},\mathfrak e^{(k)}\rangle\langle \mathfrak e^{(j)},\mathfrak e^{(k)}\rangle\langle \mathfrak e^{(l)},\mathfrak e^{(j)}\rangle\\
&\le\frac1{J^2}\sum\limits_{k,l=1}^J|\mathfrak e^{(l)}|^2|\mathfrak e^{(j)}|^2|\mathfrak e^{(k)}|^2
\end{eqnarray*}
Thus, it holds true that \begin{equation*}\fl\frac1J\sum\limits_{j=1}^J\|\mathfrak e_s^{(j)T}C(\mathfrak e_s)\|_F^2\le\frac1{J^3}\sum\limits_{j,k,l=1}^J|\mathfrak e^{(l)}|^2|\mathfrak e^{(j)}|^2|\mathfrak e^{(k)}|^2 = (\frac1J\sum\limits_{j=1}^J|e^{(j)}|^2)^3\le \frac1J\sum\limits_{j=1}^J|e^{(j)}|^6\end{equation*}
and 
with Lemma \ref{Monotonie} it follows
\begin{equation*}
\frac1J\sum\limits_{j=1}^J\|\mathfrak e_\cdot^{(j)T}C(\mathfrak e_{\cdot})\|_{\Lambda_2;T}
\le\int_0^T\E[\frac1J\sum\limits_{j=1}^J|\mathfrak e^{(j)}|^6]\,\d s
\le C,
\end{equation*}
since $p+2 := 6 \le J+4$.
\end{proof}

\begin{lemma}\label{martingale2}
For all $k\in\{1,\dots,J\}$ and $p\in(2,\frac{J+3}2)$ the process 
\begin{equation*}
(M(t))_{t\ge0} :=\left(\int_0^t\frac{p}{J^2}\sum\limits_{m=1}^K((\sum\limits_{k=1}^J|\mathfrak e_m^{(k)}|^2)^{\frac p2-1}\sum\limits_{j, l=1}^J\mathfrak e_m^{(l)}\mathfrak e_m^{(j)})\mathfrak e^{(l)\top}\d W^{(k)}\right)
\end{equation*}
is a (global) martingale.
\end{lemma}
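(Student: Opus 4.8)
The plan is to follow exactly the strategy of the proof of Lemma~\ref{martingale}: the stochastic integral $M$ is automatically a \emph{local} martingale, and it becomes a \emph{true} (global) martingale once the It\^o-isometry integrability criterion is verified (cf.\ \cite[Theorem 2.4]{GL2011}), i.e.\ once we show that the integrand lies in $\Lambda_2$ on every finite interval. Writing the integrand against $\d W^{(k)}$ as a row vector $g_s^{(k)}\in\R^{1\times K}$, it therefore suffices to establish
\begin{equation*}
\E\left[\int_0^T\|g_s^{(k)}\|_F^2\,\d s\right]=\int_0^T\E\left[\|g_s^{(k)}\|_F^2\right]\,\d s<\infty
\end{equation*}
for all $T\ge0$ and all $k\in\{1,\dots,J\}$.

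First I would estimate the Frobenius norm of the integrand by a power of the empirical second moment. The prefactor $\big(\sum_{k=1}^J|\mathfrak e_m^{(k)}|^2\big)^{\frac p2-1}$ carries total degree $p-2$ in the components of $\mathfrak e$, the factor $\mathfrak e_m^{(l)}\mathfrak e_m^{(j)}$ degree $2$, and the remaining $\mathfrak e^{(l)\top}$ degree $1$, so $g_s^{(k)}$ is homogeneous of degree $p+1$ and $\|g_s^{(k)}\|_F^2$ is of degree $2p+2$. Bounding each component by $|\mathfrak e_m^{(l)}|\le|\mathfrak e^{(l)}|$, each prefactor by $\sum_k|\mathfrak e_m^{(k)}|^2\le\sum_k|\mathfrak e^{(k)}|^2$, and using the Cauchy--Schwarz inequality to collapse the sums over $m,j,l$, one arrives at an estimate of the form
\begin{equation*}
\|g_s^{(k)}\|_F^2\le C_1\Big(\frac1J\sum\limits_{j=1}^J|\mathfrak e_s^{(j)}|^2\Big)^{p+1}\le C_1\,\frac1J\sum\limits_{j=1}^J|\mathfrak e_s^{(j)}|^{2p+2},
\end{equation*}
where the final inequality is Jensen's inequality for the convex map $x\mapsto x^{p+1}$ and $C_1=C_1(p,J,K)$ is finite.

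Finally I would invoke Lemma~\ref{Monotonie} with moment parameter $q:=2p$ in place of $p$. The hypothesis $p\in(2,\frac{J+3}2)$ is precisely what guarantees $q=2p\in(4,J+3)\subset[2,J+3)$, so that Lemma~\ref{Monotonie} applies and yields $\int_0^T\E[\frac1J\sum_{j}|\mathfrak e_s^{(j)}|^{q+2}]\,\d s=\int_0^T\E[\frac1J\sum_{j}|\mathfrak e_s^{(j)}|^{2p+2}]\,\d s<\infty$. Combined with the pointwise bound this gives the required $\Lambda_2$ integrability and hence the global martingale property. The main obstacle is the Frobenius norm estimate: the triple summation over $m,j,l$ together with the fractional prefactor makes the bookkeeping considerably more delicate than in Lemma~\ref{martingale}, and some care is needed to reduce the resulting polynomial cleanly to a single power of $\frac1J\sum_j|\mathfrak e^{(j)}|^2$ without losing the sharp degree $2p+2$ that makes the doubling $q=2p$ admissible and thereby ties the range of $p$ exactly to $\frac{J+3}2$.
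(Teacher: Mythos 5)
Your proposal is correct and follows essentially the same route as the paper: bound the Frobenius norm of the integrand by a constant times $\frac1J\sum_j|\mathfrak e^{(j)}|^{2p+2}$ (degree $2p+2$, exactly as in the paper's estimate ending in $C_3(J,K)\sum_l|\mathfrak e^{(l)}|^{2p+2}$), then obtain time-integrated finiteness of that moment and conclude via the It\^o-isometry criterion. The only cosmetic difference is the citation: you invoke Lemma~\ref{Monotonie} with doubled exponent $q=2p\in(4,J+3)$, while the paper invokes the bound \eref{bound_p+2} from the proof of Theorem~\ref{Thm_main3}; these are the same estimate (up to the norm equivalence of Lemma~\ref{lem:pTransformation}), both derived by localization and Fatou without any martingale property, and both tie the admissible range to $2p<J+3$ exactly as you observe.
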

\begin{proof}
Similarly to the proof of Lemma \ref{martingale} we estimate the Frobenius norm of the integrand by
\begin{eqnarray*}
\fl\| \sum\limits_{m=1}^K((\sum\limits_{k=1}^J|\mathfrak e_m^{(k)}|^2)^{\frac p2-1}\sum\limits_{j, l=1}^J\mathfrak e_m^{(l)}\mathfrak e_m^{(j)})\mathfrak e^{(l)\top}\|_F^2&\le C_1(J)\sum\limits_{m=1}^K(\sum\limits_{k=1}^J|\mathfrak e_m^{(k)}|^2)^{p-2}\sum\limits_{j, l=1}^J(\mathfrak e_m^{(l)})^2(\mathfrak e_m^{(j)})^2|\mathfrak e^{(l)}|^2\\
			&\le C_2(J,K) \sum\limits_{k=1}^J|\mathfrak e^{(k)}|^{2(p-2)}\sum\limits_{j, l=1}^J |\mathfrak e^{(l)}|^4|\mathfrak e^{(j)}|^2\\
			&\le C_3(J,K)\sum\limits_{l=1}^J|\mathfrak e^{(l)}|^{2p+2},
\end{eqnarray*}
where we have used Jensen's inequality and the fact $|\mathfrak e_m^{(j)}|^2 \le \sum\limits_{n=1}^K|\mathfrak e_n^{(j)}|^2 = |\mathfrak e^{(j)}|^2$. 
The assertion follows by the bound \eref{bound_p+2} in the proof of Theorem \ref{Thm_main3}, which we obtained by localization and Fatou's Lemma without martingale property.
\end{proof}
{
\section{Higher-order ensemble collapse: Proof of Theorem \ref{Thm_main3}}\label{sec:app2}
We will use the following auxiliary result in order to prove Theorem \ref{Thm_main3}.
It is a well known statement of the equivalence of norms, but we need the precise constants.
\begin{lemma}\label{lem:pTransformation}
For $a_{m,j}\in \R$, $m=1,\ldots, d$, $j=1,\ldots,J$ and $p\in \N$,
\begin{displaymath}
\sum_{j=1}^J(\sum_{m=1}^d|a_{m,j}|^2)^\frac{p}{2} 
\leq d^{(p-1)/2} \cdot \sum_{m=1}^d\sum_{j=1}^J |a_{m,j}|^p
\end{displaymath}
and
\[\sum_{m=1}^d\sum_{j=1}^J |a_{m,j}|^p \leq  J^{p/2}\cdot \sum_{m=1}^d(\sum_{j=1}^J|a_{m,j}|^2)^\frac{p}{2}.
\]
By symmetry we also have
\begin{displaymath}
\fl\sum_{m=1}^d(\sum_{j=1}^J|a_{m,j}|^2)^\frac{p}{2} \leq J^{p/2} \cdot \sum_{m=1}^d\sum_{j=1}^J |a_{m,j}|^p \quad \mbox{ and }\quad \sum_{m=1}^d\sum_{j=1}^J |a_{m,j}|^p  \leq d^{(p-1)/2} \cdot \sum_{j=1}^J(\sum_{m=1}^d|a_{m,j}|^2)^\frac{p}{2}.
\end{displaymath}
\end{lemma}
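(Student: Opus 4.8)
The plan is to reduce all four displayed inequalities to two elementary comparisons between the $\ell^p$- and $\ell^2$-norms of a single vector in $\R^n$, and then to sum the resulting fibre-wise estimates over the free index. Throughout I would use $p\ge 2$ (the regime in which the lemma is invoked, since $p\in(2,\frac{J+3}{2})$ in Theorem~\ref{Thm_main3}) together with $d,J\ge 1$, so that powers of $d$ and $J$ may be freely enlarged to reach the constants as stated.

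The two building blocks, valid for every $x=(x_1,\dots,x_n)\in\R^n$ and every $p\ge 2$, are
\begin{equation*}
\sum_{i=1}^n|x_i|^p \;\le\; \Big(\sum_{i=1}^n|x_i|^2\Big)^{p/2}
\qquad\mbox{and}\qquad
\Big(\sum_{i=1}^n|x_i|^2\Big)^{p/2} \;\le\; n^{(p-2)/2}\sum_{i=1}^n|x_i|^p .
\end{equation*}
For the first I would set $s=\sum_i|x_i|^2$ and note that $|x_i|^2/s\le 1$ forces $(|x_i|^2/s)^{p/2}\le |x_i|^2/s$ (here $p/2\ge 1$ is essential); summing over $i$ gives $\sum_i|x_i|^p\le s^{p/2}$. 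The second is Jensen's inequality for the convex function $t\mapsto t^{p/2}$ applied to the uniform average $\frac1n\sum_i|x_i|^2$, multiplied through by $n^{p/2}$.

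With these in hand the four claims follow fibre-wise. For the first inequality I would fix $j$, apply the second building block to the fibre $(a_{m,j})_{m=1}^d$ (so $n=d$), sum over $j$, and enlarge $d^{(p-2)/2}\le d^{(p-1)/2}$. For the second I would fix $m$, apply the first building block to $(a_{m,j})_{j=1}^J$ (so $n=J$), sum over $m$, and use $1\le J^{p/2}$. The remaining two are obtained by exchanging the roles of the two indices: the third from the second building block with $n=J$ summed over $m$, enlarging $J^{(p-2)/2}\le J^{p/2}$; and the fourth from the first building block with $n=d$ summed over $j$, enlarging $1\le d^{(p-1)/2}$.

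There is no substantial obstacle; the whole content is the two one-dimensional comparisons, and the only care needed is bookkeeping of constants. I would stress two points. First, the constants $d^{(p-1)/2}$ and $J^{p/2}$ are deliberately \emph{non-sharp} (the fibre-wise arguments actually produce $d^{(p-2)/2}$, $J^{(p-2)/2}$ and $1$), chosen so that the four forms read uniformly; this is harmless since $d,J\ge 1$. Second, a literal ``by symmetry'' argument does not preserve the constant in the last inequality---interchanging indices in the second inequality would only yield the weaker factor $d^{p/2}$---so I prefer to obtain all four directly from the two building blocks, which also makes transparent that the hypothesis $p\ge 2$, needed for the monotonicity $\|x\|_p\le\|x\|_2$, cannot be dropped.
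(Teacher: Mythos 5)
Your proof is correct on the range $p\ge 2$, which covers every use of the lemma in the paper (Theorem \ref{Thm_main3} invokes it only for $p\in(2,\frac{J+3}{2})$), but it takes a genuinely different route from the paper's. The paper proves the first inequality by expanding $\bigl(\sum_{m}|a_{m,j}|^2\bigr)^p$ with the multinomial theorem, bounding each mixed term $\prod_m |a_{m,j}|^{2k_m}$ via Young's inequality against $\sum_m|a_{m,j}|^{2p}$ and counting the multinomial coefficients of order $p-1$ to get the factor $d^{p-1}$; the second inequality is obtained from concavity of the square root; the remaining two are dispatched with the phrase ``by symmetry''. Your argument instead reduces everything to the two fibre-wise scalar comparisons $\|x\|_{\ell^p}\le \|x\|_{\ell^2}$ (valid for $p\ge 2$) and Jensen's power-mean inequality, then sums over the free index. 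This buys three things. First, it works for all real $p\ge 2$, whereas the multinomial expansion forces the hypothesis $p\in\N$ even though the theorem using the lemma allows non-integer $p$. Second, it yields the sharper interior constants $d^{(p-2)/2}$, $J^{(p-2)/2}$ and $1$, making explicit that the stated constants are slack and only need $d,J\ge 1$ to be reached. Third, you correctly observe that the paper's ``by symmetry'' does not literally produce the fourth inequality: interchanging indices in the second one gives the factor $d^{p/2}$, not the stated $d^{(p-1)/2}$, so that claim needs the direct derivation you supply (it holds with constant $1$ once $p\ge 2$). Your closing caveat is also on point: for $p=1$ the fourth inequality fails (take $d=2$ and all $a_{m,j}=1$), so the blanket hypothesis $p\in\N$ in the statement is slightly too generous for that claim, though harmless in the regime where the paper applies it.
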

\begin{proof}
We start with the first claim and write
\[\sum_{j=1}^J(\sum_{m=1}^d|a_{m,j}|^2)^\frac{p}{2} = \sum_{j=1}^J T_j\]
with $T_j^2 = (\sum_{m=1}^d |a_{m,j}|^2)^p$. We continue by expressing $T_j^2$ using the multinomial theorem and Young's inequality
\begin{eqnarray*}
T_j^2 &= \sum_{k_1+\cdots+k_d = p}\Big(\begin{array}{cc}
p\\k_1,\ldots, k_d
\end{array}\Big) \cdot \prod_{m=1}^d |a_{m,j}|^{2\cdot k_m}\\
&= \sum_{k_1+\cdots+k_d = p}\Big(\begin{array}{cc}
p\\k_1,\ldots, k_d
\end{array}\Big)\cdot \prod_{m=1,k_m\neq 0}^d |a_{m,j}|^{2\cdot k_m}\\
&\le  \sum_{m=1}^d |a_{m,j}|^{2p} \cdot \sum_{l_1+\cdots + l_d = p-1}\Big(\begin{array}{cc}
p-1\\l_1,\ldots, l_d
\end{array}\big) = \sum_{m=1}^d |a_{m,j}|^{2p} \cdot d^{p-1}.
\end{eqnarray*}
This means that
\[\sum_{j=1}^J(\sum_{m=1}^d|a_{m,j}|^2)^\frac{p}{2} \leq d^\frac{p-1}{2}\cdot \sum_{j=1}^J \sqrt{\sum_{m=1}^d |a_{m,j}|^{2p}} \leq d^\frac{p-1}{2}\cdot \sum_{j=1}^J  \sum_{m=1}^d|a_{m,j}|^p,\]
which proves the first statement. For the second claim we can write by concavity of the square root
\begin{equation*}
\sum_{m=1}^d(\sum_{j=1}^J|a_{m,j}|^2)^\frac{p}{2} = \sum_{m=1}^d(\sqrt{J}\cdot\sqrt{\sum_{j=1}^J\frac{|a_{m,j}|^2}{J}})^p
\geq J^{-\frac{p}{2}}\sum_{m=1}^d\sum_{j=1}^J|a_{m,j}|^p,
\end{equation*}
i.e. 
\begin{displaymath}
\sum_{m=1}^d\sum_{j=1}^J|a_{m,j}|^p \leq J^{\frac{p}{2}}\cdot \sum_{m=1}^d(\sum_{j=1}^J|a_{m,j}|^2)^\frac{p}{2}
\end{displaymath}
\end{proof}

\begin{proof}[Proof of Theorem \ref{Thm_main3}]
Recall the equation of $\mathfrak e^{(j)}$
\begin{equation*}
\d \mathfrak e^{(j)} = - \frac{1}{J}\sum_{l=1}^J \mathfrak e^{(l)} \langle \mathfrak e^{(l)}, \mathfrak e^{(j)}\rangle \d t + \frac{1}{J}\sum_{l=1}^J \mathfrak e^{(l)} \langle \mathfrak e^{(l)},\d (W^{(j)}-\overline W)\rangle.
\end{equation*}
And (recall that $\mathfrak e^{(j)} \in\R^K$) componentwise
\begin{equation*}
 \d \mathfrak e_m^{(j)} = -\frac{1}{J}\sum_{l=1}^J\mathfrak e_m^{(l)}\langle\mathfrak e^{(l)},\mathfrak e^{(j)}\rangle\d t + \frac{1}{J}\sum_{l=1}^J\mathfrak e_m^{(l)}\langle\mathfrak e^{(l)},\d(W^{(j)}-\overline W)\rangle.
 \end{equation*}
 
We define the Lyapunov function (for equivalent notions of ``$p$-norms'' of the ensemble, see lemma \ref{lem:pTransformation})
\begin{equation*}
V_p(\mathfrak e) = \frac{1}{J}\sum_{m=1}^K(\sum_{j=1}^J|\mathfrak e_m^{(j)}|^2)^\frac{p}{2}
\end{equation*}
and according to Ito's lemma it holds that
\[\d V_p(\mathfrak e) = \sum_{m=1}^K\sum_{j=1}^J \frac{\partial V_p}{\partial \mathfrak e_m^{(j)}}\d \mathfrak e_m^{(j)} + \frac{1}{2}\sum_{m,m'=1}^K\sum_{j,j'=1}^J \d \mathfrak e_m^{(j)}  \frac{\partial^2 V_p}{\partial \mathfrak e_m^{(j)} \partial \mathfrak e_{m'}^{(j')}} \d\mathfrak e_{m'}^{(j')}\]

{Analogously to the proof of Theorem \ref{Thm_main2} the expectation is given by 
\begin{equation}\label{bound_p+1}
\fl\eqalign{\E[V_p(\mathfrak e_{s+t})]&=\E[V_p(\mathfrak e_s)]\cr
					&- C(p,J)\E[\int_s^{s+t} \sum_{m=1}^K [\{(\sum_{k=1}^J |\mathfrak e_m^{(k)}|^2)^{\frac{p}{2}-1} \}[\sum_{n=1}^K(\sum_{l=1}^J\mathfrak e_m^{(l)}\mathfrak e_n^{(l)})^2]]\,\d r]\cr
					&+\E[\int_0^t\frac{p}{J^2}\sum_{m=1}^K ((\sum_{k=1}^J |\mathfrak e_m^{(k)}|^2)^{\frac{p}{2}-1} \sum_{j,l=1}^J \mathfrak e_m^{(l)}\mathfrak e_m^{(j)} \langle \mathfrak e^{(l)},\d (W^{(j)} - \frac{1}{J}\sum_{r=1}^J W^{(r)})\rangle)]}
\end{equation}
by defining $C(p,J):=\frac{p}{J^2} (1 - \frac{(p-2+J)\cdot(J-1)}{2J^2}-\frac{p-2}{2J^2})$.

Thus, similarly to Lemma \ref{Monotonie} we obtain by setting $s=0$ and using Fatou's Lemma
\begin{equation*}
\E[V_p(\mathfrak e_0)]\ge C(p,J)\E[\int_0^t \sum_{m=1}^K [\{(\sum_{k=1}^J |\mathfrak e_m^{(k)}|^2)^{\frac{p}{2}-1} \}[\sum_{n=1}^K(\sum_{l=1}^J\mathfrak e_m^{(l)}\mathfrak e_n^{(l)})^2]]\,\d s].
\end{equation*}
}

Note that
\begin{equation*}
\E[\int_0^t \sum_{m=1}^K [\{(\sum_{k=1}^J |\mathfrak e_m^{(k)}|^2)^{\frac{p}{2}-1} \}[\sum_{n=1}^K(\sum_{l=1}^J\mathfrak e_m^{(l)}\mathfrak e_n^{(l)})^2]]\,\d s]<C
\end{equation*}
Now we bound the integrand by below by:
\begin{equation*}
\sum_{m=1}^K ((\sum_{k=1}^J |\mathfrak e_m^{(k)}|^2)^{\frac{p}{2}-1})(\sum_{n=1}^K(\sum_{l=1}^J\mathfrak e_m^{(l)}\mathfrak e_n^{(l)})^2)
\ge \sum\limits_{m=1}^K(\sum\limits_{k=1}^J|\mathfrak e_m^{(k)}|^2)^{\frac p2+1} = J V_{p+2}(\mathfrak e),
\end{equation*}
Thus, we also have
\begin{equation}\label{bound_p+2}
\E[\int_0^t V_{p+2}(\mathfrak e_s)\,\d s]<C
\end{equation}
for all $p<J+3$.

Note, that with (\ref{bound_p+2}) one can prove similar to Lemma \ref{martingale}, that the stochastic integral
\begin{equation*}
\int_0^t\frac{p}{J^2}\sum_{m=1}^K ((\sum_{k=1}^J |\mathfrak e_m^{(k)}|^2)^{\frac{p}{2}-1} \sum_{j,l=1}^J \mathfrak e_m^{(l)}\mathfrak e_m^{(j)} \langle \mathfrak e^{(l)},\d (W^{(j)} - \frac{1}{J}\sum_{r=1}^J W^{(r)})\rangle)
\end{equation*}
is a martingale for all $p\in(2,\frac{J+3}2)$. For details see Lemma \ref{martingale2}.

By \eref{bound_p+1} we get that $\E[V_p(\mathfrak e_t)]$ is monotonically decreasing and it follows 
\begin{eqnarray*}
\E[V_p(\mathfrak e_t)]\le \E[V_p(\mathfrak e_0)]-C(p,J)J\int_0^t\E[V_{p+2}(\mathfrak e_s)]\,ds.
\end{eqnarray*}
By Jensen's inequality it follows
\begin{eqnarray*}
V_{p+2}(\mathfrak e) = \frac1J\sum\limits_{m=1}^K(\sum\limits_{j=1}^J|\mathfrak e_m^{(j)}|^2)^{\frac p2\frac{p+2}p}\ge K^{-\frac2p}J^{-\frac2p}(V_p(\mathfrak e))^{\frac{p+2}p}
\end{eqnarray*}
and we obtain
\begin{equation*}
\E[V_p(\mathfrak e_t)]\le \E[V_p(\mathfrak e_0)]-C(p,J)J^{1-\frac2p}K^{-\frac2p}\int_0^t\E[V_{p}(\mathfrak e_s)]^{\frac{p+2}p}\,ds.
\end{equation*}
Similarly to the proof of Theorem \ref{Thm_main2} we get
\begin{equation*}
h'\le-C(p,J)J^{1-\frac2p}K^{-\frac2p}h^{\frac{p+2}p},
\end{equation*}
by defining $h(t):=\E[V_p(\mathfrak e_t)]$, from which it follows that
\begin{equation*}
h(t)\le (\frac2pC(p,J)K^{-\frac2p}J^{1-\frac2p}t+(h(0))^{-\frac2p})^{-\frac p2}.
\end{equation*}

Finally, we conclude with
\begin{equation*}
\E[\frac1J\sum\limits_{j=1}^J|\mathfrak e_t^{(j)}|^p]\le J^{\frac p2}(\frac2pC(p,J)K^{-\frac2p}J^{1-\frac2p}t+(K^{\frac{p-1}2}\E[\frac1J\sum\limits_{j=1}^J|\mathfrak e_0^{(j)}|^p])^{-\frac2p})^{-\frac p2}
\end{equation*}
by using Lemma \ref{lem:pTransformation}.
\end{proof}

}

\section*{References}
\nocite{*}
\bibliographystyle{unsrt.bst}
\bibliography{mybib}

\end{document}